\newcounter{results}[section] % Uniform counters for lemmas, theorems, propositions etc
\theoremstyle{plain}
\newtheorem{theorem}[results]{Theorem}
\newtheorem{lemma}[results]{Lemma}
\newtheorem{proposition}[results]{Proposition}
\newtheorem{corollary}[results]{Corollary}
\newtheorem*{theorem*}{Theorem}
\newtheorem*{lemma*}{Lemma}
\newtheorem*{proposition*}{Proposition}
\newtheorem*{corollary*}{Corollary}
\newtheorem*{exercise*}{Exercise}
\newtheorem*{fact*}{Fact}
\theoremstyle{remark}
\newtheorem{remark}[results]{Remark}
\newtheorem*{remark*}{Remark}
\newtheorem*{question*}{Question}
\theoremstyle{definition}
\newtheorem*{definition*}{Definition}
\newtheorem*{example*}{Example}
\numberwithin{equation}{section}
\crefname{figure}{Figure}{Figures}
\newcommand{\Z}{\ensuremath{\mathbb Z}}%Integers
\newcommand{\R}{\ensuremath{\mathbb R}}%Real numbers
\newcommand{\C}{\ensuremath{\mathbb C}}%Complex numbers
\newcommand{\co}[2]{\ensuremath{\ClOp{#1, #2}}}
\newcommand{\bigO}{\ensuremath{\mathcal{O}}}
\DeclarePairedDelimiter\abs{\lvert}{\rvert} % Absolute value
\DeclarePairedDelimiter\bigabs{\big\lvert}{\big\rvert} % Absolute value
\DeclarePairedDelimiter\norm{\lVert}{\rVert} % Norm
\newcommand{\scal}[2]{\ensuremath{\langle #1 , #2 \rangle}} % Scalar product 
\newcommand{\eqdef}{\ensuremath{\coloneqq}} % Equal in a definition.
\newcommand\restr[2]{{% we make the whole thing an ordinary
  \left.\kern-\nulldelimiterspace % automatically resize the bar with \right
  #1 % the function
  \vphantom{\big|} % pretend it's a little taller at normal size
  \right|_{#2} % this is the delimiter
  }}
\newcommand{\DerParz}[2]{\ensuremath{\frac{\partial #1}{\partial #2}}} % Partial derivative
\DeclareMathOperator{\II}{I\!I} % Second fundamental form
\DeclareMathOperator{\dist}{dist} % Distance
\DeclareMathOperator{\Iso}{Iso} % Isometry group
\DeclareMathOperator{\U}{U} %Unitary group
\DeclareMathOperator{\D}{D} %Unitary group
\newcommand{\EBall}[1]{\ensuremath{ B^{\R^3}_{#1}(0)}} %Euclidean ball at origin
\newcommand{\EBallc}[2]{\ensuremath{ B^{\R^3}_{#1}(#2)}} %Euclidean ball at origin
\newcommand{\G}{\ensuremath{\mathcal{G}}} %Symmetries of initial Surface
\newcommand{\Sc}{\ensuremath{\mathcal{S}}} %Scherk
\newcommand{\ScIn}{\ensuremath{\mathcal{S}_{T_1,T_2}^d}}
\DeclareMathOperator{\graph}{graph} % Graph
\renewcommand{\L}{\mathcal{L}} %Linearized operator
\newcommand{\Ctwoa}{\ensuremath{\mathcal{C}^{2,\alpha}}} %Holder 2
\newcommand{\Ctwob}{\ensuremath{\mathcal{C}^{2,\beta}}} %Holder 2
\newcommand{\Cka}{\ensuremath{\mathcal{C}^{k,\alpha}}} %Holder k
\newcommand{\Czeroa}{\ensuremath{\mathcal{C}^{0,\alpha}}} %Holder 0
\newcommand{\Czerob}{\ensuremath{\mathcal{C}^{0,\beta}}} %Holder 0
\newcommand{\Czero}{\ensuremath{\mathcal{C}^{0}}} %C0
\newcommand{\loc}{\ensuremath{loc}} %loc
\newcommand{\TN}{\mathit{TN}} %Taub--NUT
\DeclareMathOperator{\Ind}{{Ind}} %Index
\DeclareMathOperator{\Null}{{Null}} %Taub--NUT
\DeclareMathOperator{\im}{\ensuremath{\mathrm{im}}}% Identity
\newcommand{\T}{\ensuremath{\mathbb T}}%Torus
\colorlet{myGray}{gray}
\colorlet{myBlue}{blue}
\colorlet{myBlack}{black}
\colorlet{myBackground}{gray!10}
\def\co{\colon\thinspace}
\title[]{Unstable minimal spheres with degree-1 Gauss lift in hyperk\"ahler 4-manifolds}
\author{Lorenzo Foscolo and Federico Trinca}
\newcommand\printaddress{{
\setlength{\parindent}{17pt}
\footnotesize

\bigskip
\par 
{\scshape \noindent Lorenzo Foscolo}
\newline Sapienza Università di Roma, Piazzale A. Moro 5, 00185 Roma, Italia
\newline
\textit{E-mail address:} \texttt{lorenzo.foscolo@uniroma1.it}
\newline
\par
{\scshape \noindent Federico Trinca}
\newline Department of Mathematics, UBC, 1984 Mathematics Road, Vancouver, Canada
\newline	
\textit{E-mail address:} \texttt{ftrinca@math.ubc.ca}
\par
}} 
\begin{document}
\begin{abstract}
We exhibit new minimal 2-spheres in hyperk\"ahler 4-manifolds arising from the Gibbons--Hawking ansatz
% , \emph{i.e.} hyperk\"ahler 4-manifolds that admit a triholomorphic circle action,
and in the K3 manifold endowed with a hyperk\"ahler metric. These minimal surfaces are obtained via a gluing construction using the Scherk surface in flat space and the holomorphic cigar in the Taub-NUT space as building blocks. As for the stable minimal 2-sphere in the Atiyah--Hitchin manifold, the minimal surfaces we construct are not holomorphic with respect to any complex structure compatible with the metric, have degree-1 positive Gauss lift so they can be parametrised by a harmonic map that satisfies a first-order Fueter-type PDE, and yet are unstable. This shows that there is no characterisation of stable minimal surfaces in hyperk\"ahler 4-manifolds in terms of topological data.
\end{abstract}
\maketitle

\thispagestyle{empty}

\section{Introduction}

It is well known that holomorphic submanifolds of a K\"ahler manifold minimise volume in their homology class. A classical problem in minimal surface theory is to understand to what extent area minimising surfaces (and more generally stable minimal surfaces) in K\"ahler manifolds must be (anti)holomorphic. For example, in 1993 Yau asked whether it is possible to classify all stable minimal 2-spheres in a simply connected K\"ahler Ricci-flat manifold \cite[Question 64]{Yau}. In \cite{Micallef} Micallef showed that every stable minimal surface in a flat 4–torus must be holomorphic for some complex structure compatible with the metric. (Note however that this is no longer the case for higher dimensional tori \cite{Arezzo:Micallef}.) In view of Micallef's result it was thought for some time that a similar result could hold for the K3 manifold (the smooth 4-manifold underlying any complex K3 surface) endowed with a hyperk\"ahler metric. Partial results in this direction were established by Micallef--Wolfson \cite[Theorem 5.3]{Micallef:Wolfson} and motivation for the conjecture came from the fact that, given an arbitrary hyperk\"ahler metric on the K3 manifold, every homology class can be represented by the sum of surfaces each of which is holomorphic with respect to some complex structure compatible with the metric. However, Micallef--Wolfson \cite{Micallef:Wolfson:K3} have eventually shown that no analogue of the result for 4–tori holds for the K3 manifold. Indeed they found a class $\alpha\in H^2(K3;\Z)$ and a hyperk\"ahler metric $g$ on the K3 manifold such that the volume minimiser in $\alpha$ decomposes into a sum of branched minimal surfaces $\Sigma_1\cup\dots\cup\Sigma_k$ not all of which can be holomorphic with respect to some complex structure compatible with $g$.

Further more explicit counterexamples were produced by the first named author as an application of his construction of hyperk\"ahler metrics on the K3 manifold degenerating to a 3-dimensional limit \cite{Foscolo2019}. Indeed, the simplest stable (in fact, area minimising) minimal 2-sphere in a hyperk\"ahler 4-manifold that is not holomorphic with respect to any complex structure compatible with the metric is given by the minimal 2-sphere at the core of (the double cover of) the Atiyah--Hitchin manifold. The latter is a complete non-compact hyperk\"ahler 4-manifold with an isometric cohomogeneity one action of $SU(2)$ constructed by Atiyah--Hitchin \cite{Atiyah:Hitchin}. The Atiyah--Hitchin manifold retracts to a 2-sphere of self-intersection $-4$, the unique singular orbit for the $SU(2)$--action. The fact that this $2$-sphere is a strictly stable minimal surface is proved in \cite[Proposition 5.5]{Micallef:Wolfson} and the stronger area minimising (and strong stability) property was observed more recently by Tsai--Wang \cite[Proposition 3.4]{Tsai:Wang}. Because of the self-intersection number, the adjunction formula shows that that this minimal 2-sphere cannot be holomorphic with respect to any complex structure compatible with the metric. Since the construction of hyperk\"ahler metrics on the K3 manifold of \cite{Foscolo2019} uses the Atiyah--Hitchin metric as one of the building blocks in a gluing construction, the existence of non-holomorphic strictly stable minimal 2-spheres in the K3 manifold follows by perturbation.

By a classical result of Eells--Salamon \cite{Eells:Salamon}, minimal surfaces in $4$-manifolds can always be interpreted as (anti)holomorphic objects by passing to the twistor space. Since the twistor space of a hyperk\"ahler $4$-manifold $(M,g)$ is $Z=M\times S^2$, the Eells--Salamon twistor correspondence has an explicit interpretation. Given any minimal immersion $u\co \Sigma\rightarrow M$ we have a map $a\co \Sigma\rightarrow S^2$, which we call the positive Gauss lift of $u$, defined by the property that for any $p\in \Sigma$ the image $a(p)$ is the opposite of the unique self-dual $2$-form up to scale on $T_{u(p)}M$ that calibrates the tangent plane to $u (\Sigma)$. Endowing $\Sigma$ with the conformal structure $j_0$ induced by $u^\ast g$ and $S^2$ with its standard complex structure, the fact that $u(\Sigma)$ is a minimal surface is equivalent to the fact that $a$ is holomorphic and $u$ satisfies
\begin{equation}\label{eq:Fueter}
du\circ j_0 + (a_1 J_1 \circ du+a_2 J_2 \circ du+a_3 J_3 \circ du) =0,
\end{equation}
where we think of $S^2$ as the unit $2$-sphere in $\R^3$ and $(J_1,J_2,J_3)$ is the triple of complex structures compatible with the hyperk\"ahler metric $g$. Moreover, the self-intersection number and genus $\gamma$ of $\Sigma$ are related to the degree of $a$ by
\begin{equation}\label{eq:Webster}
2\deg{a} + 2-2\gamma + [u(\Sigma)]\cdot [u(\Sigma)] = 0,
\end{equation}
often referred to as Webster's formula \cite{Webster1984}. In particular, if $u\co \Sigma=S^2\rightarrow M$ is a minimal $2$-sphere with self-intersection $-4$, then $a$ has degree $1$, we can reparametrise so that $a$ is the identity, and the first-order PDE \eqref{eq:Fueter} becomes a known first-order equation in harmonic map theory whose solutions induce radially invariant energy-minimising harmonic maps $f\co \R^3\setminus\{ 0\}\rightarrow M$. Such minimal $2$-spheres have been studied by Chen--Li \cite{Chen:Li} and have appeared more recently in compactness questions for Fueter and triholomorphic maps (and more generally sections of smooth fibrations) from $3$ and $4$-manifolds \cite{Bellettini:Tian,Walpuski,Esfahani}.

Now, the positive Gauss lift of a minimal surface in a hyperk\"ahler 4-manifold plays a similar role as the Gauss map of a minimal surface in $\R^3$. In the classical setting, known generalisations of the Bernstein Theorem to the classification of complete (stable) minimal surfaces in $\R^3$ in terms of ``topological'' properties of their Gauss map have been established for example by Ossermann \cite{Osserman}, do Carmo--Peng \cite{doCarmo:Peng}, Fischer-Colbrie--Schoen \cite{FischerColbrie:Schoen} and Pogorelov \cite{Pogorelov1981}. For surfaces in hyperk\"ahler 4-manifolds, besides the already cited \cite{Micallef}, interesting results relating stability, holomorphicity and the degree of the positive Gauss lift (equivalently, the self-intersection number) have been established by Arezzo, Micallef--Wolfson and Donaldson. Arezzo \cite{Arezzo} proved that a complete minimal surface is holomorphic if the image of the positive Gauss lift omits an open set of $S^2$. Micallef--Wolfson \cite[Corollary of Theorem 5.3]{Micallef:Wolfson} have shown that a minimal surface is holomorphic if and only if $\deg{a}=0$ (this in fact was proved earlier by Wolfson \cite[Theorem 2.2]{Wolfson}) and that a stable minimal surface is either holomorphic or satisfies $2\deg{a}\geq \gamma +2$. Equivalently, the self-intersection number of a stable minimal surface that is not holomorphic with respect to any complex structure compatible with the metric must be at most $\gamma -4$. Donaldson \cite[Proposition 26]{Donaldson} (see also \cite[Theorem 5.1]{Micallef:Wolfson:K3}) has further shown that a non-holomorphic strictly stable minimal surface in a hyperk\"ahler 4-manifold must have self-intersection number at most $-4$ independently of its genus. The minimal 2-sphere in the Atiyah--Hitchin manifold shows that this bound is sharp.

Given that the only explicitly known non-holomorphic stable minimal 2-spheres in hyperk\"ahler 4-manifolds all arise from the minimal 2-sphere in the Atiyah--Hitchin manifold, as a reformulation of Yau's question one could ask to what extent stable minimal 2-spheres in a hyperk\"ahler 4-manifold are characterised by the condition of having positive Gauss lift of degree $0$ or $1$. In this paper we exhibit minimal 2-spheres in hyperk\"ahler 4-manifolds, including certain hyperk\"ahler metrics on the K3 manifold, that have self-intersection $-4$, so they satisfy \eqref{eq:Fueter} with $a$ the identity, and yet are unstable. This proves that there is no topological characterisation of stable minimal 2-spheres in hyperk\"ahler 4-manifolds and ultimately that there is no possible classification along the lines of Yau's question. Our main existence results are \cref{thm: Main theorem,thm: spheres in K3}. Together with \cite{Foscolo2019}, the latter in particular implies
\begin{theorem*}
On the K3 manifold there exist hyperk\"ahler metrics that contain stable minimal $2$-spheres with positive Gauss lift of degree $1$ (equivalently, with self-intersection number $-4$) and hyperk\"ahler metrics that contain unstable minimal 2-spheres with the same topological properties.
\end{theorem*}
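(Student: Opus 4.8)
The statement has two halves: the existence of K3 metrics carrying a \emph{stable} minimal $2$-sphere with degree-$1$ Gauss lift, and the existence of K3 metrics carrying an \emph{unstable} minimal $2$-sphere with the same topological invariants. The first half is essentially already in the literature, so the bulk of the work is the second half, which is the content of \cref{thm: spheres in K3}.

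For the stable case, the plan is to invoke the gluing construction of hyperk\"ahler metrics on the K3 manifold degenerating to a flat $3$-torus orbifold from \cite{Foscolo2019}, where the Atiyah--Hitchin manifold appears as one of the building blocks glued in near an isolated singularity. The core $2$-sphere of the (double cover of the) Atiyah--Hitchin manifold is strictly stable and area minimising by \cite[Proposition 5.5]{Micallef:Wolfson} and \cite[Proposition 3.4]{Tsai:Wang}, and has self-intersection $-4$. One then argues that strict stability is an open condition under the gluing perturbation: the Jacobi operator of the glued-in $2$-sphere converges to that of the Atiyah--Hitchin $2$-sphere as the gluing parameter degenerates, so its smallest eigenvalue stays positive for small parameter. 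Hence the perturbed surface is a strictly stable minimal $2$-sphere in the resulting K3 metric, and it retains self-intersection $-4$, equivalently $\deg a = 1$ by Webster's formula \eqref{eq:Webster}.

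For the unstable case, the plan is to feed \cref{thm: Main theorem} — our gluing construction producing unstable minimal $2$-spheres with self-intersection $-4$ in hyperk\"ahler $4$-manifolds from the Gibbons--Hawking ansatz, built out of a Scherk surface in flat space and a holomorphic cigar in Taub--NUT — into a degenerating family of K3 metrics. Concretely, one arranges a Gibbons--Hawking-type configuration (a collection of monopole points on a flat $T^3$, or a suitable periodic configuration) that both supports the Scherk--cigar minimal sphere of \cref{thm: Main theorem} and can be completed to a hyperk\"ahler metric on the K3 manifold via the collapsing construction of \cite{Foscolo2019}; one then transplants the minimal $2$-sphere to the compact manifold by the same perturbation argument as above, checking that the \emph{instability} (a negative eigenvalue of the Jacobi operator, which is again an open condition) and the self-intersection number $-4$ are preserved. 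Since $\deg a=1$ forces the first-order equation \eqref{eq:Fueter} with $a$ the identity, both spheres have identical topological data, so no topological quantity can detect stability.

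The main obstacle is the compatibility between the two gluing constructions: one must verify that the local model carrying the Scherk--cigar minimal sphere of \cref{thm: Main theorem} occurs inside an admissible degenerating family of K3 metrics from \cite{Foscolo2019}, i.e.\ that the monopole/periodic data can be chosen to be simultaneously valid input for both. A secondary, more technical point is making the transplantation argument uniform: one needs weighted Schauder estimates on the neck regions of the degenerating metric to guarantee that the Jacobi operator of the minimal sphere in the glued metric converges to that of the model in a norm strong enough to control the relevant eigenvalue, so that stability (resp. instability) genuinely persists. Once these two points are in place, the theorem follows by combining \cref{thm: Main theorem}, \cref{thm: spheres in K3}, and \cite{Foscolo2019}.
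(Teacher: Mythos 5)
Your high-level strategy matches the paper exactly: cite \cite{Foscolo2019} for the stable half via the Atiyah--Hitchin building block, and feed the unstable sphere of \cref{thm: Main theorem} into a Foscolo-type degenerating family for the unstable half. But there is a genuine gap in the transplantation step. You invoke ``the same perturbation argument as above,'' i.e.\ the one used for the stable sphere, and you worry only about whether the negative eigenvalue is preserved. For the Atiyah--Hitchin sphere that argument works because strict stability means the Jacobi operator is invertible. The sphere from \cref{thm: Main theorem} is \emph{not} nondegenerate: it has $\Null\geq 1$ coming from the Killing field generating the circle action on $X_d$ (equivalently, the $S^1$--family of minimal spheres indexed by the base point $p\in\pi^{-1}(0)$). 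The circle action is an isometry of $X_d$ but is not an isometry of the glued K3 metric, so this Jacobi field is genuinely an obstruction, not a gauge freedom you can quotient out. Without addressing it, the implicit function theorem simply fails at this step — you have no reason to believe the approximate minimal surface perturbs to a genuine one.

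The paper's resolution, which is the real content of \cref{sec:Applications}, is to set up a \emph{highly symmetric} $T^2\times S^1$ configuration (\cref{sec: highly symmetric torus}) so that the discrete group $\tilde\G$ acts on the K3 metric, its stabiliser at each puncture $p_i$ is a copy of $\G\simeq D_4\times\Z_2$, and the approximate Jacobi field from the Killing vector is \emph{not} $\G$--equivariant (because $\G$ acts anti-equivariantly on the circle fibres). One can then invoke White's equivariant implicit function theorem with the $\tilde\G$--equivariant kernel being trivial by \cref{prop: uniform linear estimates}. Your stated ``main obstacle'' of compatibility between the two constructions gestures at this, but the resolution requires the specific square torus and the weight-$4$ punctures to satisfy the balancing condition in \cite{Foscolo2019}, together with the verification that the $D_4\times\Z_2$ action of \cref{sec: symmetries Xd} really does lift through the Gibbons--Hawking and hyperk\"ahler quotient descriptions. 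Your ``secondary point'' about weighted Schauder estimates on neck regions is also slightly off target: the minimal sphere sits entirely in the compact $X_d$ region, where the glued metric $\tilde g_\epsilon$ agrees with (a rescaling of) $g_d$ and the eventual perturbation to $g_\epsilon$ is $\mathcal{C}^\infty$-small there; the analytic difficulty is the kernel, not the neck.
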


% The fact that the direct link between index and topology of the Gauss map for surfaces in $\R^3$ does not hold in higher dimensions has already been observed by Carlotto et al. in \cite{Carlotto??}, where they produce minimal hypersurfaces of fixed topological type in  

\begin{remark*}
We also produce minimal surfaces in hyperk\"ahler 4-manifolds obtained via the Gibbons--Hawking ansatz that have arbitrarily large Morse index, either by increasing their genus or the complexity of their normal bundle (in the latter case, the ambient geometry must also be of increasingly complicated topology), see \cref{thm: multiple periods,thm: saddle tower}.
\end{remark*}

\begin{remark*}
    Besides the mathematical motivations we have discussed, the holomorphicity of volume minimisers and stable minimal submanifolds in K\"ahler Ricci-flat manifolds have recently gained interest also in the theoretical physics literature \cite{WeakGravity} in the context of the Weak Gravity Conjecture.
\end{remark*}

The minimal surfaces we produce arise from a simple gluing construction. The simplest ambient geometry we consider is the complete non-compact hyperk\"ahler metrics with ALF asymptotics arising from the Gibbons--Hawking ansatz with 4 centres. Recall (more details will be given in \cref{sec:Gibbons:Hawking}) that the Gibbons--Hawking ansatz produces families of explicit hyperk\"ahler metrics with an isometric and triholomorphic circle action from a collection of $k$ distinct points in $\R^3$ and a constant $\ell\in (0,\infty]$ that determines the asymptotic geometry at infinity: if $\ell\in (0,\infty)$ the asymptotic geometry is ALF, \emph{i.e.} the one of a Riemannian submersion with base the flat metric on $\R^3$ and circle fibres of fixed length $2\pi\ell$; the limiting case $\ell = \infty$ corresponds instead to an ALE metric asymptotic to $\R^4/\Z_k$. The relative positions of the $k$ points in $\R^3$ can be interpreted instead as the periods of the  hyperk\"ahler triple. In our set-up we consider a family $X_d$ of ALF Gibbons--Hawking metrics corresponding to $\ell=1$ (which for $\ell\in (0,\infty)$ can always be achieved by scaling) and $k=4$ points with mutual distance $d\gg1$. In the limit $d\rightarrow \infty$, the geometry of $X_d$ is captured by five different pointed Gromov--Hausdorff limits: a copy of $\R^3\times S^1$ and four copies of the Taub--NUT metric on $\R^4$. Our gluing construction combines simple minimal surfaces in each of these five building blocks. First of all, we regard the singly-periodic \emph{Scherk surface}, one of the classical minimal surfaces in $\R^3$, as a minimal surface in (a totally geodesic copy of $\R^2\times S^1$ in) $\R^3\times S^1$ with four asymptotically cylindrical ends. Each of the ends is then capped off by a holomorphic (for a suitable complex structure) disc, the \emph{cigar}, in each of the four Taub--NUT spaces. For $d\gg 1$ sufficiently large, this produces an $S^1$ family of approximate minimal spheres in $X_d$, as described in \cref{sec: aprroximate minimal surface}.

Now, since the linearisation of the minimal surface equation is a self-adjoint operator, the presence of a $1$-parameter family of approximate solutions to the equation means that the gluing problem might be obstructed in general. We overcome this difficulty by considering the most symmetric situation where the four centres of the multi-Taub--NUT space $X_d$ lie at the vertices of a square with side-length $\sqrt{2} d$. We then have a large group of discrete symmetries in our construction and in \cref{sec: linear problem on building blocks} we show that the linearised operators of the building blocks of our approximate minimal surfaces have no equivariant kernel and cokernel. An application of (a quantitative version of) the Implicit Function Theorem then yields our first main result \cref{thm: Main theorem}, the existence of minimal spheres in $X_d$ for every $d$ sufficiently large. The fact that the Scherk surface has Gauss map of degree 1 (once we compactify it by adding four points corresponding to the unit normals to the cylindrical ends) implies the surfaces we produce have positive Gauss lift of degree $1$. Similarly, the fact that the Scherk surface has index 1 as a minimal surface in $\R^3$ is the reason why the minimal surfaces we produce have index at least $1$ (and nullity at least $1$, since the Killing field on $X_d$ generating the circle action yields a non-trivial Jacobi field on our minimal surfaces).

\begin{remark*}
Until now, all known compact minimal surfaces in hyperk\"ahler 4-manifolds arising from the Gibbons--Hawking ansatz were circle invariant holomorphic spheres. A convexity argument by the second named author shows that in the simplest case of $k=2$ centres these are all the compact minimal surfaces \cite[Theorem 1.1]{Trinca2022}. Our \cref{thm: Main theorem} shows this is not the case in general. 
\end{remark*}

\begin{remark*}
Besides the family of ALF metrics arising from the Gibbons--Hawking ansatz, also known as cyclic ALF metrics of type $A_3$, the smooth 4-manifold $X_d$ carries another family of ALF metrics of dihedral type $D_3$. Both families share Kronheimer's ALE metrics of type $A_3=D_3$ as a limit for $\ell\rightarrow\infty$, while they have $\R^3$ and, respectively, $\R^3/\Z_2$ as a collapsed limit for $\ell\rightarrow 0$. Some of the dihedral ALF metrics contain a strictly stable minimal $2$-sphere in the same homology class of our unstable minimal $2$-spheres. Indeed, following the suggestion by Sen \cite{Sen} that motivated \cite{Foscolo2019}, Schroers--Singer \cite{Schroers:Singer} and Zhu \cite{Zhu2024} have shown how to produce $D_3$ ALF metrics in a gluing construction using the (double cover of the) Atiyah--Hitchin metric as a building block; the existence of a stable minimal 2-sphere follows as in \cite{Foscolo2019}, see \cite{Zhu2024}.
\end{remark*}

It seems likely that a substantial generalisation of our construction could be given in the case where discrete symmetries are not imposed. Indeed, Kapouleas \cite{Kapouleas2011} has developed a general gluing construction of minimal hypersurfaces without discrete symmetries introducing suitable deformations of the Scherk surface to compensate for the $1$-dimensional kernel of the linearised operator. Such a generalisation would also require a better understanding of the analysis on the cigar minimal surface in the Taub--NUT space, which is asymptotically cylindrical but not with exponential decay. In this paper we preferred concentrating on the geometric aspects and consequences of our constructions, using discrete symmetries to eliminate most technical difficulties.

In \cref{sec:Applications} we give instead various immediate generalisations of our existence result using multiple periods of the Scherk surface (thus producing unstable minimal surfaces of any genus in $X_d$ with self-intersection number $-4$) and Karcher's saddle towers, certain minimal surfaces in $\R^2\times S^1$ with an arbitrary even number of cylindrical ends. More importantly, we use the construction of hyperk\"ahler metrics on the K3 manifold in \cite{Foscolo2019} to deduce our second main result \cref{thm: spheres in K3}, the existence of unstable minimal $2$-spheres with self-intersection $-4$ in the K3 manifold. This is achieved by finding a suitable highly symmetric configuration for the construction of \cite{Foscolo2019} where rescaled copies of $X_d$ and its minimal surfaces can be used as building blocks.

\subsubsection*{Acknowledgements} During this work, both authors were partially supported by the Royal Society University Research Fellowship Renewal 2022 URF\textbackslash R\textbackslash 221030. Part of this work was carried out during a research visit funded by Sapienza research project ``Algebraic and differential aspects of varieties and moduli spaces'' and LF wishes to thank the PI Gabriele Mondello for the support. FT was partially supported by the Pacific Institute for the Mathematical Sciences (PIMS), and would like to thank Jason Lotay for introducing him to the study of minimal surfaces in hyperk\"ahler 4-manifolds arising from the Gibbons--Hawking Ansatz. LF would like to thank Xuwen Zhu for mentioning \cite{WeakGravity} in connection to the construction of minimal 2-spheres in \cite{Foscolo2019}.

\section{The ambient space}\label{sec:Gibbons:Hawking}

In this section, we describe the simplest hyperk\"ahler $4$-manifolds that will contain the minimal surfaces that we aim to construct. Being symmetric $4$-pointed multi-Taub--NUT spaces, these metrics are given explicitly in terms of the celebrated Gibbons--Hawking ansatz \cite{GibbonsHawking1978}, which we recall below.

\subsection{The ansatz} Let $U\subset\R^3$ be an open subset and let $\pi:P\to U$ be a principal $S^1$-bundle over $U$ endowed with a connection 1-form $\theta$. Suppose that there exists a positive harmonic function $\phi$ on $U$ satisfying the abelian monopole equation:
\begin{align}\label{eqn: monopole equation}
    d\theta=\ast_{\R^3} d\phi,
\end{align}
i.e., $\ast d\phi$ is the curvature of the connection $\theta$. We endow $P$ with the explicit Riemannian metric:
\[
g_{gh}=\phi\,\pi^\ast g_{\R^3} +\phi^{-1}\,\theta^2,
\]
which induces the explicit hyperk\"ahler structure:
\begin{equation}\label{eqn: hyperkahler structure GH}
\omega_i^{gh}=dx_i\wedge \theta+\phi\, dx_j\wedge dx_k,
\end{equation}
where $x=(x_1,x_2,x_3)$ are fixed coordinates on $U$ and $(ijk)$ is any cyclic permutation of $(123)$. Using these formulas, it is clear that the twistor sphere can be identified with the unit sphere $S^2$ in the base $\R^3$. The principal $S^1$-action on $P$ is tri-holomorphic and therefore in particular isometric. 

In this work, we are interested in the examples where $U=\R^3\setminus\bigcup_{i=1}^n \{p_i\}$, for a fixed set of distinct points $p_1,...,p_n$, and where
\[
\phi=l^{-1}+\sum_{i=1}^n \frac{1}{2\abs{x-p_i}_{\R^3}}
\]
for a given constant $l>0$.
Such a set and harmonic function determine, up to gauge, a unique principal bundle $(P,\theta)$ over $U$. The metric defined by the ansatz then extends smoothly across each puncture by adding a single point. These spaces are complete with ALF (asymptotically locally flat) asymptotic geometry at infinity and are usually called multi-Taub--NUT spaces since, when there is only one characterizing point, the metric coincides with the classical Taub--NUT space.

In the rest of the paper, for every positive $d>0$, we let $(X_d, g_d)$ denote the multi-Taub--NUT space as above with $l=1$ and four characterizing points:
\begin{equation}\label{eqn: characterizing points Xd}
    p_1=\tfrac{\sqrt{2}}{2}(d,d,0), \qquad p_2=\tfrac{\sqrt{2}}{2}(d,-d,0), \qquad p_{-1}=\tfrac{\sqrt{2}}{2}(-d,-d,0), \qquad p_{-2}=\tfrac{\sqrt{2}}{2}(-d,d,0).
\end{equation}
Note that the normalisation $l=1$ can always be achieved by scaling.

\subsection{Minimal surfaces} We now recall some known facts about minimal surfaces in the spaces constructed via the Gibbons--Hawking ansatz. 

Lotay and Oliveira \cite{LotayOliveira2024}*{Section 4} classified all $S^1$--invariant minimal surfaces in these spaces. Indeed, they showed that any connected $S^1$--invariant minimal surface corresponds to a straight line, half-line or segment in $U\subset\R^3$. Any such minimal surface is calibrated by $\omega^{gh}_{\Dot{\gamma}}=\sum_{i=1}^3 \Dot{\gamma}_i\omega_i^{gh}$, where $\gamma=(\gamma_1,\gamma_2,\gamma_3)$ is the $g_{\R^3}$-arclength parametrisation of the projection to $\R^3$.  For example, in the Taub--NUT space, \emph{i.e.} the hyperk\"ahler space arising from the Gibbons--Hawking ansatz with only one characterizing point and $l>0$, for any fixed complex structure there exists a unique $S^1$--invariant complex curve biholomorphic to $\C$. This surface, often referred to as the ``cigar'', corresponds to the half-line passing through the characterizing point and with direction the one corresponding to the given complex structure. The cigar is one of the building blocks of the minimal surfaces we construct in this paper.

Another consequence of \cite{LotayOliveira2024} is that all compact $S^1$--invariant minimal surfaces of the multi-Taub--NUT spaces are diffeomorphic to $S^2$ and correspond to straight line segments connecting characterizing points. Moreover, it is easy to see that these minimal spheres are the unique homological area minimisers in their homology class (see f.i. \cite{Trinca2022}*{Corollary 3.34}) and, when there are at most two characterising points, these are the only compact minimal surfaces \cite{Trinca2022}*{Theorem 1.1}. In particular, in $X_d$ we have two minimal spheres corresponding to the straight line segments  $[p_{-1},p_{1}]$ and  $[p_{-2},p_{2}]$ intersecting along the circle $\pi^{-1}(0)$. The minimal surfaces we will construct can be thought of as desingularisations of this configuration.

\subsection{Symmetries}\label{sec: symmetries Xd} We now introduce a subgroup $\G<\Iso(X_d,g_d)$, which is isomorphic to the direct product of the dihedral group $\D_4$ with $\Z_2$ and will play a crucial role in our construction, see \cref{sec: Scherk's des} and \cref{sec: perturbation to minimal surface} below. The generators of this subgroup are certain isometric, anti-equivariant lifts to $X_d$ of the following isometries of $\R^3$:
\begin{equation*}
    \begin{aligned}
         \Tilde{R}_1(x_1,x_2,x_3)=(-x_1,x_2,x_3), \quad \Tilde{R}_2(x_1,x_2,x_3)=(x_1,-x_2,x_3), \quad \Tilde{R}_3(x_1,x_2,x_3)=(x_1,x_2,-x_3),\\
    \Tilde{R}_4(x_1,x_2,x_3)=(x_2,x_1,x_3), \qquad \Tilde{R}_5(x_1,x_2,x_3)=(-x_2,-x_1,x_3).\qquad\qquad \qquad
    \end{aligned}
\end{equation*}
The latter generate the group, isomorphic to $\D_4\times\Z_2$, of isometries of Euclidean $\R^3$ which preserve our choice of characterising points, hence the harmonic function $\phi$ and the origin. We will now show that the $\Tilde{R}_i$'s lift to anti-equivariant isometries of $(X_d, g_d)$, \emph{i.e.} isometries that intertwine the action of $e^{i\theta}\in S^1$ and $e^{-i\theta}$, satisfying the same relations as the $\Tilde{R}_i$'s. 

% We will then show that we can choose these lifts so they are isometries of $X_d$.

\begin{remark*}
    While any diffeomorphism of $\R^3$ preserving the characterising points lifts to $X_d$, in general it is not obvious that a group $\G$ of isometries of $\R^3$ preserving the characterising points has a lifted action on the corresponding Gibbons--Hawking space as it might be necessary to pass to a non-trivial extension of $\G$: for example, the antipodal map on $\R^3$ can only be lifted to an order-4 symmetry of the Taub--NUT space. 
\end{remark*}

In order to show that $\D_4\times\Z_2$ acts on $X_d$ lifting the action on $\R^3$, we recall that LeBrun \cite{LeBrun1991} proved that, in the complex structure corresponding to the direction $(0,0,1)\in \R^3$, $X_d$ is $S^1$--equivariantly (actually, $\C^*$--equivariantly) biholomorphic to the affine surface $D_d\subset \C^3$ defined by
\[
D_d=\left\{(w_1,w_2,w_3)\in \C^3: w_1 w_2=(w_3-\pi_\C(p_1))(w_3-\pi_\C(p_2))(w_3-\pi_\C(p_{-1}))(w_3-\pi_\C(p_{-2}))\right\},
\]
where $\pi_\C:\R^3\to\C$ is defined by $\pi_\C(x_1,x_2,x_3)=x_1+ix_2$ and $S^1$ acts on $\C^3$ as $e^{i\theta}\cdot (w_1, w_2, w_3) = (e^{i\theta}w_1, e^{-i\theta}w_2, w_3)$.
% as the $\SU(2)$-maximal torus on the first two factors of $\C^3$.
Given this identification, we define $\Bar{R}_i$, $i=1,\dots, 5$, to be the (anti)biholomorphisms of $\C^3$ defined by
\[
\Bar{R}_1(w_1,w_2,w_3)=(\bar{w}_1,\bar{w}_2,-\bar{w}_3), \quad \Bar{R}_2(w_1,w_2,w_3)=(\bar{w}_1,\bar{w}_2,\bar{w}_3), \quad \Bar{R}_3(w_1,w_2,w_3)=(w_2,w_1,w_3),
\]
\[
\Bar{R}_4(w_1,w_2,w_3)=(-\bar{w}_1,-\bar{w}_2,i\bar{w}_3), \qquad \Bar{R}_5(w_1,w_2,w_3)=(-\bar{w}_1,-\bar{w}_2,-i\bar{w}_3).
\]
They descend to anti-equivariant diffeomorphisms of $D_d$ and, using more explicitly the construction of LeBrun, one can check that the $\Bar{R}_i$ induce the given $\Tilde{R}_i$ on $\R^3$. For example, this is clear for the induced action on $\C\times \{ 0\}\subset \R^3$ since $w_3=x_1+ix_2$ in LeBrun's description. Note that the group generated by the $\Bar{R}_i$'s is again a copy of $D_4\times \Z_2$ since the relations
\[
\Bar{R}_1^2 = \Bar{R}_3^2=\Bar{R}_4^2=(\Bar{R}_1 \Bar{R}_3)^2 = (\Bar{R}_4 \Bar{R}_3)^2 = (\Bar{R}_1 \Bar{R}_4)^4 =1
\]
are easily checked.

Observe that, because of their anti-equivariance, we are free to compose all the symmetries $\Bar{R}_i$'s with the same element in the circle of triholomorphic isometries. In other words, our definition of the action of $\G$ on $X_d$ depends on the choice of a base point $p$ in the circle fibre $\pi^{-1}(0)$ over the origin in $\R^3$, which from now on we assume given.

It remains to show that the $\Bar{R}_i$'s act by isometries on $(X_d, g_d)$. Consider first the limiting singular case $d=0$, corresponding to the quotient of the Taub--NUT metric by $\Z_4$. The metric $g_0$ on $X_0$ can be realised as a hyperk\"ahler quotient of $\mathbb{H}\times (\R^3\times S^1)$ by the circle action $e^{i\theta}\cdot (q, x, e^{it}) = (e^{i\theta}q, x, e^{4i\theta}e^{it})$. This means we restrict to the level set $\{\overline{q}iq + 4x=0\}\subset \mathbb{H}\times \R^3\times S^1$ of the hyperk\"ahler moment map and quotient by the $S^1$--action. Writing $q=z_1 + j z_2$ with $z_1, z_2\in \C$ the identification with $D_0$ is given by setting $w_1 = z_1^4$, $w_2 = z_2^4$ and $w_3 = z_1 z_2$. The triholomorphic circle action on $X_0$ is induced by the circle action on the last $S^1$--factor and the corresponding hyperk\"ahler moment map is induced by the projection to $\R^3$.

Now, define isometries $\check{R}_1, \check{R}_3, \check{R}_4$ of $\mathbb{H}\times \R^3\times S^1$ by
\[
\check{R}_1(z_1, z_2, x, e^{it}) = (i\Bar{z}_1, i\Bar{z}_2, \Tilde{R}_1(x), e^{-it}), \qquad \check{R}_3(z_1, z_2, x, e^{it}) = (z_2, z_1, \Tilde{R}_3(x), e^{-it}),\]
\[
\check{R}_4(z_1, z_2, x, e^{it}) = (\zeta\Bar{z}_1, \zeta\Bar{z}_2, \Tilde{R}_4(x), -e^{-it}),
\]
where $\zeta$ is a unit complex number such that $\zeta^2=i$ (note the choice of $\zeta$ has no significance once we pass to the quotient by the $S^1$--action). It is clear that $\check{R}_i$ preserves the zero level set of the moment map, is anti-invariant with respect to the circle action on the last factor which induces the triholomorphic circle action on $X_0$, and acts as $\Bar{R}_i$ on $D_0$ and as $\Tilde{R}_i$ on the image $\R^3$ of the hyperk\"ahler moment map on $X_0$. Moreover, one checks that
\[
\check{R}_1^2 = \check{R}_3^2 = \check{R}_4^2 = (\check{R}_1 \check{R}_3)^2=(\check{R}_4 \check{R}_3)^2=1, \quad (\check{R}_1 \check{R}_4)^4(z_1, z_2, x, e^{it}) = (-z_1, -z_2, x, e^{it})
\sim (z_1, z_2, x, e^{it})
\]
modulo the action of $S^1$. We conclude that $D_4\times \Z_2$ acts by isometries on $g_0$ as claimed.

From this, one can argue that the action of $\G$ on $X_d$ is also by isometries, since the metric $g_d$ is the unique ALF K\"ahler Ricci-flat metric on the complex affine variety $D_d$ with exact K\"ahler form and which is asymptotic to the quotient of the Taub--NUT metric by $\Z_4$. Such a uniqueness result is a consequence of the classification of ALF gravitational instantons of cyclic type by Minerbe \cite{Minerbe}.
Alternatively (and more explicitly), we can also realise the metric $g_d$ as a hyperk\"ahler quotient of $\mathbb{H}^4\times \R^3\times S^1$ by $T^4$, where the $j$th factor $S^1\subset T^4$ acts diagonally on the corresponding factor $\mathbb{H}\subset\mathbb{H}^4$ and on $\R^3\times S^1$. As for the $d=0$ case, we can define a diffeomorphism $\check{R}_i$ of $\mathbb{H}^4\times \R^3\times S^1$ composing the definition of $\check{R}_i$ for $d=0$ with the permutation of the four $\mathbb{H}$--factors in $\mathbb{H}^4$ corresponding to the permutation of the characterising points $\{p_{\pm 1}, p_{\pm 2}\}$ induced by $\Tilde{R}_i$. The $\check{R}_i$'s then induce an isometric action of $\G$ on the hyperk\"ahler quotient $X_d$, which coincides with the action given by the $\Bar{R}_i$'s.

Note that in the hyperk\"ahler quotient description of $X_d$ the action of $\G$ on the factor $\R^3\times S^1$ is given by
\begin{equation}\label{eqn: G in R3xS1}
    \begin{aligned}
    R_i\left((x_1,x_2,x_3),e^{it}\right)&=\left(\Tilde{R}_i(x_1,x_2,x_3), e^{-it}\right),\\
    R_j\left((x_1,x_2,x_3),e^{it}\right)&=\left(\Tilde{R}_j(x_1,x_2,x_3), -e^{-it}\right)
    \end{aligned}
\end{equation}
for $i=1,2,3$ and $j=4,5$. The projection $\mathbb{H}^4\times \R^3\times S^1\rightarrow \R^3\times S^1$ induces a $\G$--equivariant diffeomorphism between $\pi^{-1}(\Omega)\subset X_d$ and $\Omega\times S^1$ for any $\G$--invariant open subset $\Omega\subset \R^3\setminus \{ p_1, p_{2}, p_{-1}, p_{-2}\}$. This can be shown explicitly from the hyperk\"ahler quotient description observing that for every $x\neq p_i$ the equation $\overline{q}iq + x = p_i$ for $q\in\mathbb{H}$ has solutions given by a non-trivial $S^1$--orbit. Alternatively, we have the following

\begin{lemma}[Equivariant trivialization near the origin]\label{lemma: exp gauge}
    For every $T<d$, there exists a $\G$--equivariant trivialization of $\pi^{-1}(\EBall{T})\subset X^d$. Moreover, the connection 1-form $\theta$ has no radial component in this trivialization. 
\end{lemma}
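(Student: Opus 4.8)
The plan is to build the trivialization by integrating the connection $\theta$ radially outward from the circle fibre $\pi^{-1}(0)$ over the origin, starting from the chosen base point $p\in\pi^{-1}(0)$, and then to check that the resulting trivialization is compatible with the $\G$--action. Concretely, over the punctured ball $\EBall{T}\setminus\{0\}$ (which contains no characterising points since $T<d$) the bundle $P$ is topologically trivial, so there is a trivialization in which $\theta = dt + A$ for a connection $1$-form $A = \sum A_i\, dx_i$ on $\EBall{T}$; the curvature $dA = \ast_{\R^3} d\phi$ is the abelian monopole field. I would first produce a gauge in which the radial component $A_r \eqdef \sum_i \frac{x_i}{|x|}A_i$ vanishes: this is the standard ``radial'' or ``exponential'' gauge, obtained by the gauge transformation $e^{i g(x)}$ with $g(x) = -\int_0^1 A_r(sx)\, s\,|x|\, ds$ (equivalently, transport the fibre coordinate along rays emanating from $0$). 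Because $\phi$ and hence $\ast d\phi$ extend smoothly across the origin in the Gibbons--Hawking picture (the metric closes up smoothly there by adding the point), this integral is smooth up to $x=0$, so the gauge extends across the whole ball and over the central fibre; this gives the trivialization of $\pi^{-1}(\EBall{T})$ with $\theta = dt + A$ and $A_r\equiv 0$, which is the ``no radial component'' assertion.

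Next I would promote this to a $\G$--equivariant trivialization. The key point is that the group $\D_4\times\Z_2$ has already been shown (via the hyperk\"ahler quotient / LeBrun descriptions above) to act by isometries on $X_d$ intertwining $e^{i\vartheta}\mapsto e^{-i\vartheta}$, and the $\Tilde R_i$ on the base $\R^3$ all fix the origin and preserve rays through the origin (they are linear orthogonal maps). Therefore each lift $\check R_i$ preserves $\pi^{-1}(\EBall{T})$ and maps radial rays to radial rays. Pulling back the radial gauge by $\check R_i$ produces another radial gauge, so it differs from the original by a gauge transformation that is \emph{constant along rays}, i.e.\ a function on $S^2$ (times $\EBall{T}$); but a gauge transformation preserving the radial condition and agreeing with an isometry on the central fibre is determined by its value at the base point $p$. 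Since we have the freedom, noted in the text, to compose all the $\check R_i$ simultaneously with a fixed element of the triholomorphic circle, I would normalise so that the lifted symmetries fix $p$ (or act on $\pi^{-1}(0)\cong S^1$ by the prescribed reflections $e^{it}\mapsto \pm e^{-it}$ of \eqref{eqn: G in R3xS1}); then the pulled-back gauge coincides with the original one. Hence in this trivialization each $\check R_i$ acts by $\big(x, e^{it}\big)\mapsto \big(\Tilde R_i x, \pm e^{-it}\big)$, exactly matching \eqref{eqn: G in R3xS1}, which is the equivariance claim.

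I expect the main obstacle to be the bookkeeping in the second paragraph: verifying that the radial gauge is genuinely $\G$--invariant and not merely invariant up to a $\G$--valued (possibly nontrivial) cocycle. One must rule out a sign/phase ambiguity — a priori a symmetry could preserve the radial condition yet act on the fibres by a rotation depending on the ray — and this is where one uses (i) the anti-equivariance, which rigidifies the fibre action to be an honest reflection rather than an arbitrary circle element, (ii) the fact that all the $\Tilde R_i$ fix $0$ and act linearly, so ``constant along rays'' is preserved, and (iii) the single global normalisation at the base point $p$, which simultaneously kills the ambiguity for all generators because they satisfy the $\D_4\times\Z_2$ relations already checked above. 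A clean way to organise this is to note that the space of radial gauges is a torsor over $C^\infty(S^2, S^1)$, that $\G$ acts on this torsor, and that the stabiliser condition at $p$ picks out a unique $\G$--fixed point; alternatively, one integrates $\theta$ outward from $p$ along rays and checks directly that the resulting section is intertwined with the $\check R_i$ because $\theta$ itself is (anti-)invariant. The smoothness across $x=0$ — needed so the trivialization really is of $\pi^{-1}(\EBall{T})$ and not just of the punctured version — follows from the smooth extension of the Gibbons--Hawking metric at the centre and is the only genuinely analytic input.
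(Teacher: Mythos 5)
Your proposal takes essentially the same route as the paper: the trivialization is defined by parallel transport (radial/exponential gauge) along Euclidean rays from $0$, starting at the chosen base point $p\in\pi^{-1}(0)$, and $\G$--equivariance follows because the $\Tilde R_i$ are orthogonal (so map rays to rays) and the lifted $\check R_i$ preserve $\theta$ up to sign, hence commute with parallel transport. The paper phrases this in two lines without the torsor/cocycle bookkeeping; your second paragraph is more elaborate but reaches the same conclusion, since a radial gauge is uniquely determined by its value at the single point $p$ over the origin, so the pulled-back section must coincide with the original once the normalisation at $p$ is fixed. One small slip worth flagging: you justify smoothness at $x=0$ by saying ``the metric closes up smoothly there by adding the point'' --- that is the phenomenon at the characterising points $p_i$, not at the origin. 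Since $T<d$, the origin is a regular point, $\phi$ is smooth on all of $\EBall{T}$ and $\pi^{-1}(0)$ is an honest circle fibre, so the bundle and connection are smooth on the whole ball for the elementary reason that there are no punctures there; no appeal to the removable-singularity mechanism at the $p_i$ is needed or correct.
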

\begin{proof}
Consider the trivialization induced by the section of the $S^1$--bundle which coincides with $p$ at $\pi^{-1}(0)$ and is extended to $\EBall{T}$ via parallel transport along Euclidean radial geodesics from the origin. Since the $\Bar{R}_i$'s preserve the connection 1-form $\theta$ up to sign and the $\Tilde{R}_i$'s map radial geodesics from the origin to radial geodesics from the origin, the constructed section is $\G$--equivariant and the lemma follows. 
\end{proof}

\begin{remark}\label{rmK:Symmetries:hk:triple}
The induced action of $D_4\times \Z_2$ on the triple of hyperk\"ahler forms on $X_d$ is generated by $-\Tilde{R}_i$, $i=1,\dots, 5$, as can be checked explicitly from \eqref{eqn: hyperkahler structure GH}. In particular, the induced action of $D_4\times \Z_2$ on the copy of $\R^3$ spanned by the hyperk\"ahler triple has no non-zero fixed points.  
\end{remark}

\subsection{Metric estimates} In this subsection, we prove that for $d$ large enough, the metric $g_d$ on $X_d$ is close to the flat one in a neighbourhood of $\pi^{-1}(0)\subset X_d$, and to the Taub--NUT metric in a neighbourhood of $\pi^{-1}(p_i)$ for every $i=\pm1,\pm2$.

\begin{lemma}\label{lemma: estimates phi near 0} For $\abs{x}\ll d$ we have
\[
\phi=m+\bigO\left(\frac{\abs{x}}{d^2}\right), \qquad  \nabla^k \phi=\bigO\left(d^{-k-1}\right)
\]
for all $k\geq 1$, where $m:=1+\frac{2}{d}$ and $\nabla$, $\abs{\,\cdot\,}$ are calculated using the Euclidean metric $g_{\R^3}$.
\end{lemma}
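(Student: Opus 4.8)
The plan is to exploit that $\phi$ is, up to the additive constant $l^{-1}=1$, a sum of four Newtonian potentials centred at the points $p_{\pm1},p_{\pm2}$, each of which lies at Euclidean distance exactly $d$ from the origin: indeed $\abs{p_i}=\tfrac{\sqrt2}{2}\sqrt{d^2+d^2}=d$ for all $i\in\{\pm1,\pm2\}$. Consequently, for $\abs{x}\leq d/2$ we have $\abs{x-p_i}\geq d/2$ for every $i$, so on this region each summand $\tfrac{1}{2\abs{x-p_i}}$ is smooth and its size and derivatives are controlled purely by the (large) distance to the centre $p_i$.

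First I would record the value at the origin: $\phi(0)=1+\sum_i\tfrac{1}{2\abs{p_i}}=1+4\cdot\tfrac{1}{2d}=1+\tfrac2d=m$, which is exactly the definition of $m$. For the first estimate I would write $\phi(x)-m=\sum_i\bigl(\tfrac{1}{2\abs{x-p_i}}-\tfrac{1}{2\abs{p_i}}\bigr)$ and bound each term by the mean value inequality along the Euclidean segment from $0$ to $x$: since $\nabla\bigl(\tfrac{1}{2\abs{x-p_i}}\bigr)=-\tfrac{x-p_i}{2\abs{x-p_i}^3}$ has norm $\tfrac{1}{2\abs{x-p_i}^2}\leq \tfrac{2}{d^2}$ on that segment (as $\abs{x-p_i}\geq d/2$ there), we get $\bigl\lvert\tfrac{1}{2\abs{x-p_i}}-\tfrac{1}{2\abs{p_i}}\bigr\rvert\leq \tfrac{2\abs{x}}{d^2}$. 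Summing the four contributions gives $\phi=m+\bigO(\abs{x}/d^2)$.

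For the higher derivative bounds I would use that $z\mapsto\tfrac{1}{2\abs{z}}$ is homogeneous of degree $-1$ away from the origin, so $\nabla^k\bigl(\tfrac{1}{2\abs{z}}\bigr)$ is homogeneous of degree $-1-k$ and hence $\bigl\lvert\nabla^k\bigl(\tfrac{1}{2\abs{z}}\bigr)\bigr\rvert\leq C_k\abs{z}^{-1-k}$ for a dimensional constant $C_k$ (obtained by scaling from the unit sphere, or by directly differentiating the Newtonian kernel). Evaluating at $z=x-p_i$ with $\abs{x-p_i}\geq d/2$ yields $\bigl\lvert\nabla^k\bigl(\tfrac{1}{2\abs{x-p_i}}\bigr)\bigr\rvert\leq C_k(d/2)^{-1-k}=\bigO(d^{-1-k})$; since $\nabla^k$ annihilates the constant term $1$ for $k\geq1$, summing over the four centres gives $\nabla^k\phi=\bigO(d^{-1-k})$, as claimed.

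There is no real obstacle here: the substance is the elementary remark that the singularities of $\phi$ recede to distance $\sim d$ from the region of interest, so that $\phi$ and all its derivatives are, after subtracting the constant, uniformly small. The only point requiring minor care is that all implied constants must be independent of $d$ and of $x$ in the stated range; this is automatic once one restricts to $\abs{x}\leq d/2$, which the hypothesis $\abs{x}\ll d$ comfortably ensures for $d$ large.
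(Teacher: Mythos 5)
Your proof is correct and is essentially the same argument as the paper's, just written out in more detail: the paper rescales by $d$ to rewrite $\phi = 1 + \tfrac{1}{2d}\sum_i \lvert x/d - p_i/d\rvert^{-1}$ and invokes Taylor's theorem near $x/d \approx 0$, while you make the same estimate explicit by computing $\phi(0)=m$, applying the mean value inequality for the zeroth-order bound, and using the degree-$(-1-k)$ homogeneity of the Newtonian kernel for the higher derivatives. Both routes are elementary observations that the singularities of $\phi$ sit at distance $d$ from the region of interest; yours spells out the constants the paper's one-liner leaves to the reader.
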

\begin{proof}
    By writing
    \[
    \phi=1+\sum_{i=1}^4 \frac{1}{2\abs{x-p_i}}=1+\frac{1}{2d}\sum_{i=1}^4 \frac{1}{\abs{\frac{x}{d}-\frac{p_i}{d}}},
    \]
    it is clear that the estimates follow by Taylor's Theorem for $d^{-1}x \approx 0$.
\end{proof}
\begin{lemma}\label{lemma: estimates theta near 0} In the trivialization of \cref{lemma: exp gauge}, for $\abs{x}\ll d$ we have
    \[
    \theta=dt + \bigO\left(\frac{\abs{x}}{d^2}\right), \qquad \nabla^k\theta=\bigO\left(d^{-k-1}\right),
    \]
for all $k\geq 1$, where $\nabla$, $\abs{\,\cdot\,}$ are calculated using the Euclidean metric $g_{\R^3}$.
\end{lemma}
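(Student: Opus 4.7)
The plan is to exploit the radial-gauge trivialisation from \cref{lemma: exp gauge} to write
\[\theta = dt + A,\]
where $A$ is the pullback of a 1-form on $\EBall{T}\subset\R^3$ satisfying $\iota_{\partial_r} A = 0$ and $dA = F$ with $F \coloneqq *_{\R^3}d\phi$. Because $\phi$ is harmonic on $\EBall{T}$ (which is disjoint from the characterising points $p_{\pm 1}, p_{\pm 2}$), $F$ is a closed 2-form. I would then apply the Poincar\'e homotopy operator on the star-shaped domain $\EBall{T}$ to obtain the explicit primitive in radial gauge:
\[
A_x(v) = \int_0^1 t\, F_{tx}(x,v)\, dt.
\]
Uniqueness of such a primitive follows from the fact that a closed 1-form on a star-shaped domain killed by $\iota_{\partial_r}$ is of the form $d\psi$ with $\psi$ radially constant, hence identically zero.

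Once this integral representation is available, both estimates reduce to direct applications of the bounds $\abs{\nabla^k F} = \bigO(d^{-k-2})$ for every $k\geq 0$, which follow from \cref{lemma: estimates phi near 0}. Antisymmetry of $F$ forces $F_{tx}(x,v)$ to carry an explicit factor of $\abs{x}$, so $\abs{A} = \bigO(\abs{x}/d^2)$, matching the $\Czero$ estimate $\theta = dt + \bigO(\abs{x}/d^2)$. For higher derivatives, differentiating under the integral via the Leibniz rule produces two types of contributions: terms in which all derivatives fall on $F$, giving $\bigO(\abs{x})\cdot \bigO(d^{-k-2}) = \bigO(d^{-k-1})$ on the region $\abs{x}\ll d$; and mixed terms in which one derivative hits the explicit $x$ in the integrand, leaving $\bigO(\abs{\nabla^{k-1}F}) = \bigO(d^{-k-1})$. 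Since $dt$ is parallel in the product trivialisation, $\nabla^k\theta = \nabla^k A$ for $k\geq 1$, and the claimed bound follows.

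The only real obstacle is a bookkeeping one: verifying that the parallel-transport trivialisation of \cref{lemma: exp gauge} actually realises the radial gauge produced by the Poincar\'e homotopy, rather than differing from it by an exact 1-form on $\EBall{T}$. This is immediate from the uniqueness statement above. If one prefers to bypass this, $A$ can instead be reconstructed along each radial ray as the solution of the ODE obtained by contracting $dA = F$ with $\partial_r$ with zero initial data at the origin, which directly reproduces the same integral formula. No analytic input beyond \cref{lemma: estimates phi near 0} is needed.
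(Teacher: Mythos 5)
Your proposal is correct and takes essentially the same route as the paper: both exploit the radial-gauge property of the parallel-transport trivialisation from \cref{lemma: exp gauge} and integrate the curvature (monopole equation) radially from the origin, with \cref{lemma: estimates phi near 0} supplying the bounds on $\ast d\phi$. Your Poincar\'e-homotopy formula is just the closed form of the radial ODE used in the paper (as you note yourself), and it has the minor advantage of making the $k\geq 1$ derivative estimates, which the paper leaves implicit, fully explicit.
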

\begin{proof}
In any local trivialization around $0\in\R^3$, the connection $1$-form $\theta$ can be written as follows:
\[
\theta=dt+a_r dr+a_\alpha d\alpha +a_\beta d\beta,
\]
where $r\in[0,\infty)$, $\alpha\in[0,\pi)$, $\beta\in[0,2\pi)$ are the standard spherical coordinates on $\R^3$. By our choice of trivialization, we have that $a_r=0$. Hence, the monopole equation, \cref{eqn: monopole equation}, implies:
\[
\left\lvert\DerParz{a_\alpha}{r}\right\rvert\leq r\abs{\nabla\phi}, \qquad \left\lvert\DerParz{a_\beta}{r}\right\rvert\leq r\sin\alpha\abs{\nabla\phi}.
\]
From this equation and \cref{lemma: estimates phi near 0}, we deduce that:
\[
\abs{a_\alpha d\alpha}\leq C r/d^2, \qquad \abs{a_\beta d\beta}\leq C r/d^2,
\]
and, hence, the desired estimates.
%\[
%\DerParz{a_{\beta}}{\alpha}-\DerParz{a_{\alpha}}{\beta}=r^2\sin\alpha \DerParz{\phi}{r}
%\]
\end{proof}
From \cref{lemma: estimates phi near 0} and \cref{lemma: estimates theta near 0}, we obtain the following estimates for the metric $g_d$.

\begin{lemma}\label{lemma: metric estimates near 0} In the trivialization of \cref{lemma: exp gauge}, for $\abs{x}\ll d$ we have
\[
g_d=g_m+\bigO\left(\frac{\abs{x}}{d^2}\right) \qquad \nabla^k g_d=\bigO\left(d^{-k-1}\right)
\]
for all $k\geq 1$, where  $\nabla$, $\abs{\,.\,}$ are computed with respect to the flat metric $g_m:=m\, g_{\R^3}+m^{-1}\, dt^2$, $m=1+2/d$.
\end{lemma}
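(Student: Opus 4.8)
The plan is to substitute the Gibbons--Hawking expression $g_d=\phi\,\pi^\ast g_{\R^3}+\phi^{-1}\theta^2$ and compare it termwise with $g_m=m\,g_{\R^3}+m^{-1}\,dt^2$, working throughout in the $\G$--equivariant trivialization of \cref{lemma: exp gauge} on $\pi^{-1}(\EBall{T})\cong\EBall{T}\times S^1$. The Levi--Civita connection of $g_m$ is the standard product connection on $\EBall{T}\times S^1$, so $dx_1,dx_2,dx_3,dt$ are all parallel; in particular $\nabla^k g_m=0$ for $k\geq1$, hence $\nabla^k g_d=\nabla^k(g_d-g_m)$. Setting $a\eqdef\theta-dt$ and using $\theta^2-dt^2=a\,(\theta+dt)$ (symmetric products of $1$-forms), one gets
\[
g_d-g_m=(\phi-m)\,g_{\R^3}+(\phi^{-1}-m^{-1})\,dt^2+\phi^{-1}\,a\,(\theta+dt).
\]

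For the $\Czero$ estimate I would argue as follows. By \cref{lemma: estimates phi near 0}, $\phi-m=\bigO(\abs{x}/d^2)$, and since $\phi,m$ are bounded above and below by positive constants uniformly in $d$ for $\abs{x}\ll d$, this also gives $\phi^{-1}-m^{-1}=(m-\phi)/(m\phi)=\bigO(\abs{x}/d^2)$ and $\phi^{-1}=\bigO(1)$. By \cref{lemma: estimates theta near 0}, $a=\bigO(\abs{x}/d^2)$, so $\theta+dt=2\,dt+a=\bigO(1)$ and $\abs{a}\ll1$. Feeding these into the displayed identity, each of the three summands is $\bigO(\abs{x}/d^2)$, which is the first claim.

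For the derivative estimates, since $dx_i$ and $dt$ are parallel we have $\nabla^k\big((\phi-m)g_{\R^3}\big)=(\nabla^k\phi)\,g_{\R^3}=\bigO(d^{-k-1})$ by \cref{lemma: estimates phi near 0}, and $\nabla^k\big((\phi^{-1}-m^{-1})dt^2\big)=(\nabla^k\phi^{-1})\,dt^2$. A Fa\`a di Bruno expansion writes $\nabla^k\phi^{-1}$ as a finite sum of terms $\phi^{-1-j}\prod_\ell\nabla^{i_\ell}\phi$ with $1\leq j\leq k$, $i_\ell\geq1$ and $\sum_\ell i_\ell=k$; as $\phi^{-1-j}$ is bounded and each $\nabla^{i_\ell}\phi=\bigO(d^{-i_\ell-1})$, this is $\bigO(d^{-k-j})=\bigO(d^{-k-1})$. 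For the last summand, write $a(\theta+dt)=2\,a\,dt+a^2$; since $dt$ is parallel, $\nabla^k(a\,dt)=(\nabla^k a)\,dt=\bigO(d^{-k-1})$ by \cref{lemma: estimates theta near 0}, while Leibniz gives $\nabla^k(a^2)=\sum_i\binom{k}{i}(\nabla^i a)(\nabla^{k-i}a)=\bigO(d^{-k-2})$, using $\abs{a}=\bigO(\abs{x}/d^2)$ and $\abs{x}\ll d$ for the extreme terms and two factors of type $\bigO(d^{-\bullet-1})$ for the others. Hence $\nabla^k\big(a(\theta+dt)\big)=\bigO(d^{-k-1})$, and one more Leibniz expansion distributing derivatives between the bounded factor $\phi^{-1}$ (whose higher derivatives are $\bigO(d^{-2})$ and smaller, by the same argument) and $a(\theta+dt)$ yields $\nabla^k\big(\phi^{-1}a(\theta+dt)\big)=\bigO(d^{-k-1})$, completing the proof.

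The argument is essentially bookkeeping on top of \cref{lemma: estimates phi near 0} and \cref{lemma: estimates theta near 0}; the only points needing care are keeping the ``zeroth-order'' factors $\phi$, $\phi^{-1}$, $\theta+dt$ uniformly bounded (so the chain- and product-rule expansions lose no powers of $d$) and exploiting $\abs{x}\ll d$ to absorb the $\bigO(\abs{x}/d^2)$ terms, which are in fact $\smallO(d^{-1})$. I do not expect a genuine obstacle here.
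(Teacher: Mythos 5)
Your proof is correct, and it takes the route the authors evidently had in mind: the paper omits a proof of \cref{lemma: metric estimates near 0} entirely, presenting it as an immediate consequence of \cref{lemma: estimates phi near 0,lemma: estimates theta near 0} via the explicit Gibbons--Hawking formula $g_d=\phi\,\pi^\ast g_{\R^3}+\phi^{-1}\theta^2$. Your decomposition $g_d-g_m=(\phi-m)g_{\R^3}+(\phi^{-1}-m^{-1})\,dt^2+\phi^{-1}a(\theta+dt)$ with $a=\theta-dt$, together with the observations that $g_m$ is flat with parallel coframe $dx_i,dt$ and that $\phi,\phi^{-1},\theta+dt$ are uniformly $\bigO(1)$ for $\abs{x}\ll d$, is exactly the bookkeeping the authors leave to the reader.
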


In a neighbourhood of a puncture, it is clear that the metric approaches the Taub--NUT metric. In this region, we do not need precise quantitative estimates.

\begin{lemma}\label{lemma: metric estimates near punctures} 
For any $T>0$ let $\pi_{\TN}^{-1}(\EBall{T})$ be a local chart of the Taub--NUT space, i.e., the space constructed via the Gibbons--Hawking ansatz using the harmonic function $\phi_{\TN}=1+\frac{1}{2\abs{x}}$.
% , and let $\pi^{-1}(\EBallc{T}{p_i})$ be a local chart of $X_d$ for some $i=\pm1,\pm2$.
Then for any $i=\pm1,\pm2$ and for $d$ large enough, there exists an $S^1$--equivariant diffeomorphism $f_i\co \pi_{\TN}^{-1}(\EBall{T}) \rightarrow \pi^{-1}(\EBallc{T}{p_i}) \subset X_d$ covering the linear map $x\mapsto x+p_i$ on $\R^3$ and such that $f_i^\ast g_d$ smoothly converges to the Taub--NUT metric $g_{\TN}$ as $d\to\infty$. 
\end{lemma}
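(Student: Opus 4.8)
The plan is to reduce the statement to a comparison of Gibbons--Hawking data on the base. First I would translate coordinates by setting $y = x - p_i$, so that, via $x\mapsto x+p_i$, a neighbourhood of $\pi^{-1}(p_i)$ in $X_d$ becomes the Gibbons--Hawking space over $\EBall{T}\subset\R^3_y$ (with the fibre over $0$ collapsed) determined by
\[
\phi_d(y+p_i) = 1 + \frac{1}{2\abs{y}} + \psi_{i,d}(y), \qquad \psi_{i,d}(y)\eqdef\sum_{j\neq i}\frac{1}{2\abs{y-(p_j-p_i)}} = \phi_d(y+p_i) - \phi_{\TN}(y),
\]
the remaining sum of Newtonian potentials having its poles outside $\EBall{T}$. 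Since $\abs{p_j-p_i}\geq\sqrt{2}\,d$ for $j\neq i$, once $d$ is large enough that $\sqrt 2\, d \geq 2T$ the function $\psi_{i,d}$ is smooth and harmonic on $\EBall{T}$ with $\norm{\psi_{i,d}}_{C^k(\overline{\EBall{T}})} = \bigO(d^{-1})$ for every $k$, by Taylor's theorem exactly as in \cref{lemma: estimates phi near 0}. In particular $\ast_{\R^3} d\psi_{i,d}$ is a smooth \emph{closed} $2$-form on $\EBall{T}$, of size $\bigO(d^{-1})$ in every $C^k$.

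Next I would build $f_i$ as an $S^1$-equivariant bundle map. Pulling back the circle bundle underlying $X_d$ near $p_i$ along $y\mapsto y+p_i$ gives a principal $S^1$-bundle $P_d\to\EBall{T}\setminus\{0\}$, to be compared with the bundle $P_{\TN}\to\EBall{T}\setminus\{0\}$ underlying the given Taub--NUT chart. Over the punctured ball, which retracts onto $S^2$, these bundles are classified by the Euler number $\tfrac1{2\pi}\int_{S^2}(\mathrm{curvature})$; by \cref{eqn: monopole equation} the two curvatures are $\ast d\phi_d$ and $\ast d\phi_{\TN}$, and they differ by $\ast d\psi_{i,d}$, whose integral over a small $2$-sphere about the origin vanishes because $\psi_{i,d}$ is harmonic. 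Hence $P_d\cong P_{\TN}$ over the punctured ball, and an isomorphism extends (using $S^1$-equivariance, both total spaces being the smooth Taub--NUT $\R^4$) over the $S^1$-fixed point above the origin; composing with the projections gives an $S^1$-equivariant diffeomorphism $f_i\co\pi_{\TN}^{-1}(\EBall{T})\to\pi^{-1}(\EBallc{T}{p_i})$ covering $x\mapsto x+p_i$. Any two such $f_i$ differ by a gauge transformation $e^{ih}\co\EBall{T}\to S^1$, which alters the basic $1$-form $\beta_{i,d}\eqdef f_i^\ast\theta_d - \theta_{\TN}$ (which satisfies $d\beta_{i,d} = \ast d\psi_{i,d}$) by the exact form $dh$; so I would fix $f_i$ so that $\beta_{i,d}$ is the radial-gauge (Poincar\'e lemma) primitive of the smooth closed form $\ast d\psi_{i,d}$, exactly as in the proof of \cref{lemma: exp gauge}. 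This primitive then satisfies $\norm{\beta_{i,d}}_{C^k}=\bigO(d^{-1})$ for every $k$.

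With these choices, and using that translations are isometries of Euclidean $\R^3$, one obtains
\[
f_i^\ast g_d = \bigl(\phi_{\TN}+\psi_{i,d}\bigr)\,\pi_{\TN}^\ast g_{\R^3} + \bigl(\phi_{\TN}+\psi_{i,d}\bigr)^{-1}\bigl(\theta_{\TN}+\beta_{i,d}\bigr)^2,
\]
and since $\psi_{i,d},\beta_{i,d}\to 0$ in $C^k_{\loc}$ for every $k$, this converges to $\phi_{\TN}\,\pi_{\TN}^\ast g_{\R^3} + \phi_{\TN}^{-1}\theta_{\TN}^2 = g_{\TN}$ on $\pi_{\TN}^{-1}(\EBall{T}\setminus\{0\})$; the convergence extends smoothly across the $S^1$-fixed point over the origin because on both sides one has smooth $S^1$-invariant metrics whose Gibbons--Hawking data converge smoothly there. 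I expect the only genuine obstacle to be the treatment of the connection $1$-form: unlike the harmonic function it is not canonically attached to the bases, so one must first exhibit the equivariant bundle map $f_i$ and only then spend the residual gauge freedom to arrange $f_i^\ast\theta_d - \theta_{\TN} = \bigO(d^{-1})$ in every $C^k$; everything else is Taylor's theorem as in \cref{lemma: estimates phi near 0}.
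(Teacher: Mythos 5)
The paper states this lemma without proof, remarking only that near a puncture ``it is clear that the metric approaches the Taub--NUT metric'' and that quantitative estimates are not needed there; your argument is a correct and complete filling-in of that omitted step, following the same Gibbons--Hawking strategy the paper uses near the origin in \cref{lemma: estimates phi near 0,lemma: estimates theta near 0,lemma: metric estimates near 0}. Translating so that $\phi_d = \phi_{\TN} + \psi_{i,d}$ with $\psi_{i,d}$ a smooth harmonic remainder that is $\bigO(d^{-1})$ in every $C^k$, and checking that the two curvatures differ by the exact smooth form $\ast d\psi_{i,d}$ so the Euler numbers over a small sphere about the puncture agree, is exactly what is needed; the residual gauge freedom is then spent on the radial Poincar\'e primitive as in \cref{lemma: exp gauge} to get $f_i^\ast\theta_d - \theta_{\TN} = \bigO(d^{-1})$. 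The one point worth an extra sentence is the smooth extension of the equivariant bundle isomorphism over the $S^1$--fixed point, which you assert but do not fully justify: after the radial-gauge normalisation one has an $S^1$-equivariant diffeomorphism of the punctured total spaces with bounded derivatives in Gibbons--Hawking coordinates, and since both total spaces are smooth at the nut with the standard local model, the map and its inverse extend smoothly. Alternatively, one can dispense with the Euler-number step entirely by constructing $f_i$ directly from the hyperk\"ahler-quotient presentation $X_d = (\mathbb{H}^4\times\R^3\times S^1)/\!/\!/T^4$ of \cref{sec: symmetries Xd}, matching the $i$th $\mathbb{H}$-factor to the single $\mathbb{H}$-factor of Taub--NUT; this is more explicit but requires more moment-map bookkeeping, and your more intrinsic argument is arguably cleaner.
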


\section{Approximate minimal surfaces}\label{sec: aprroximate minimal surface}
In this section, we construct initial approximate minimal surfaces in $(X_d,g_d)$ for every $d$ sufficiently large. The idea is to desingularise the intersection of the two $S^1$--invariant minimal spheres corresponding to the straight line segments $[p_{-1},p_{1}]$ and $[p_{-2},p_{2}]$ by gluing a singly periodic Scherk's surface, thought of as a minimal surface in $\R^2\times S^1\subset \R^3\times S^1$ by quotienting by the period, near the intersecting circle. As a matter of fact, it is more accurate to think of this gluing construction in terms of the four $S^1$--invariant holomorphic discs corresponding to $[0,p_i]$, which are holomorphic with respect to the four distinct complex structures $p_i/|p_i|$ for $i=\pm 1,\pm2$ and that intersect in the common boundary $\pi^{-1}(0)$. Each end of Scherk's surface, which is asymptotic to a cylinder after we quotient by the period, is glued to one of these $S^1$--invariant holomorphic curves. Since Scherk's surface is not $S^1$--invariant, we have in fact a circle of approximate minimal surfaces obtained in this way.

\subsection{Scherk's desingularization}\label{sec: Scherk's des} Scherk's singly periodic surface $\Sc$ (of parameter $\pi/4$) is classically known as the minimal surface in $\R^3$ defined by the equation
\begin{equation}\label{eqn: eq Scherk}
    \sinh{x_1}\cosh{x_2}=\sin{t}.
\end{equation}
Because $\Sc$ is $2\pi$--periodic in $t$, it can be regarded as a minimal surface in $\R^2\times \R/2\pi\Z\cong\R^2\times S^1$. 

We now recall some important geometric properties of $\Sc\subset \R^2\times S^1$ as studied by Kapouleas in his gluing constructions (cfr. \cites{Kapouleas97, Kapouleas2011}). First of all, $\Sc$ is invariant under the following isometries of $\R^2\times S^1$:
\[
(x_1,x_2,e^{it})\mapsto(-x_1,x_2,e^{it}), \qquad (x_1,x_2,e^{it})\mapsto(x_1,-x_2,e^{it}), \qquad (x_1,x_2,e^{it})\mapsto(x_1,x_2,e^{-it}),
\]
\[
(x_1,x_2,e^{it})\mapsto(x_2,x_1,-e^{-it}), \qquad (x_1,x_2,x_3)\mapsto(-x_2,-x_1,-e^{-it}).
\]
Secondly, $\Sc$ has four exponentially asymptotically cylindrical ends. The asymptotic cylinders are the $S^1$--invariant submanifolds
\[
\left\{(x,x,e^{it})\in\R^2\times S^1: x\in\R^\pm, \,  t\in[0,2\pi)\right\}, \qquad \left\{(x,-x,e^{it})\in\R^2\times S^1: x\in\R^\pm,  \, t\in[0,2\pi)\right\}.
\]
Finally, the Gauss map is a diffeomorphism from $\Sc$ into $S^2$ with four punctures. The punctures corresponds to the four normals of the asymptotic cylinders. Embedding $\R^2\times S^1$ in $\R^3\times S^1$ as the totally geodesic hypersurface $\{ x_3=0\}$ we regard $\Sc$ as a minimal surface in $\R^3\times S^1$. In fact, in view of \cref{lemma: metric estimates near 0} we rescale by $m=1+2d^{-1}\approx 1$ and regard $\Sc$ as a minimal surface in $(\R^3\times S^1, g_m)$.

For $0<2T<d$, the trivialisation of $\pi^{-1}(\EBall{2T})\subset X_d$ given by \cref{lemma: exp gauge} yields a natural way to embed a piece of $\Sc$ in $\pi^{-1}(\EBall{2T})$, which we denote by $\Sc_T$. In fact, given the trivialisation of \cref{lemma: exp gauge} depends on a choice of base-point $p\in \pi^{-1}(0)$, we have an $S^1$--family of surfaces $\Sc_T$ in $\pi^{-1}(\EBall{T})$. We fix a choice of base point so that we work with a single surface $\Sc_T$ and a group $\G\simeq D_4\times \Z_2$ acting on $X_d$ by isometries.
% In fact, $\Sc$ defined as in \cref{eqn: eq Scherk} is naturally embedded in $\R^2\times \R/\Z\cong\R^2\times\{0\}\times S^1\subset \R^3\times S^1$ and the trivialization gives the identification of $\pi^{-1}(\EBall{T})\subset X^d$ with $\EBall{T}\times S^1$. 
Most properties of $\Sc\subset \R^2\times S^1$ still hold for $\Sc_T$. We enumerate them in the following proposition, which is a straightforward consequence of \cite{Kapouleas2011}*{Section 5}.

\begin{proposition}\label{prop: properties of Scherk}
    There exists $d_0>0$ such that the following holds for every $d>d_0$. There exist $0<t_0<T_0$ such that for any $T_0<T<\frac{1}{2}d$ we have: 
    \begin{enumerate}
        \item $\Sc_T$ is invariant under $\G$;
        \item $\Sc_T \cap \pi^{-1}\left( \EBall{T}\setminus \EBall{t_0}\right)$ has four connected components labelled by $i=\pm 1, \pm 2$. The $i$th component is the graph of a normal vector field $\nu_i$ on the cyclinder $C^i$ parametrised by
        \[
        (t, e^{i\phi}) \longmapsto \left(t\frac{p_i}{d},e^{i\phi}\right).
        \]
        The normal vector fields $\nu_i$'s are given by
        \[
        \nu_i(t, e^{i\phi})=f(t,e^{i\phi})\, \widehat{p}_i^{\perp},
        \]
        where $f:(t_0,\infty)\times S^1 \to\R$ is a smooth exponentially decaying function independent of $i$ and
        \begin{align*}
             & \widehat{p}_1^{\perp}=\tfrac{\sqrt{2}}{2}(-\partial_{x_1}+\partial_{x_2}), \quad  \widehat{p}_2^{\perp}=\tfrac{\sqrt{2}}{2}(-\partial_{x_1}-\partial_{x_2}), \\
              & \widehat{p}_{-1}^{\perp}=\tfrac{\sqrt{2}}{2}(+\partial_{x_1}-\partial_{x_2}), \quad  \widehat{p}_{-2}^{\perp}=\tfrac{\sqrt{2}}{2}(+\partial_{x_1}+\partial_{x_2}).
        \end{align*}
        Here by abuse of notation $\partial_{x_i}$ denotes the horizontal lift of the corresponding vector field on $B^{\R^3}_T$ to $X_d$.
      \item For every $\epsilon>0$ the constant $T_0$ can be chosen so that the (positive) Gauss lift $a:\Sc_T\to S^2\setminus \bigcup_{i=\pm1,\pm2} B_\epsilon(p_i/d)$ is a diffeomorphism, where we identify the twistor sphere of $X_d$ with the unit sphere in $\R^3$.
    \end{enumerate}
\end{proposition}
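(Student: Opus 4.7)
The plan is to identify $\Sc_T$ with a piece of the flat Scherk surface $\Sc \subset \R^2 \times S^1 \subset \R^3 \times S^1$ via the $\G$--equivariant trivialization of \cref{lemma: exp gauge}, and then transfer each of the three properties from $\Sc$ to $\Sc_T$, using \cref{lemma: metric estimates near 0} to control the error coming from the difference between $g_d$ and the flat model metric $g_m$. None of the three parts should require serious analytic work: they will all reduce to classical facts about the singly-periodic Scherk surface together with small-perturbation arguments.

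For part (1), I would observe that the trivialization of \cref{lemma: exp gauge} is $\G$--equivariant by construction, with $\G$ acting on $\EBall{T} \times S^1$ via \eqref{eqn: G in R3xS1}. A direct inspection shows that the five symmetries of $\Sc$ listed immediately before the proposition are precisely the restrictions to the totally geodesic slice $\{x_3=0\}\times S^1$ of the $\G$--action \eqref{eqn: G in R3xS1}, so $\G$--invariance of $\Sc_T$ reduces to invariance of $\Sc$ as a subset of $\R^3 \times S^1$. For part (2), I would quote the classical description of the four exponentially asymptotically cylindrical ends of the singly-periodic Scherk surface from \cite{Kapouleas2011}*{Section 5}. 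Under the identification $p_i = d\,\widehat{p}_i$ the four asymptotic cylinders become the $C^i$ of the statement, with $\widehat{p}_i^\perp$ the obvious unit normal within $\{x_3=0\}$; that a single decay profile $f(t, e^{i\phi})$ suffices for all four graphs is forced by the $\D_4$--subgroup of the symmetry group of $\Sc$, which permutes the four ends transitively and commutes with the cylinder parametrizations.

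For part (3), I would use the explicit hyperk\"ahler triple \eqref{eqn: hyperkahler structure GH} to compute the positive Gauss lift of a minimal surface sitting in the totally geodesic slice $\{x_3=0\}\times S^1$ of $(\R^3 \times S^1, g_m)$: if the tangent plane at a point is spanned by a unit vector $u \in \R^2$ and $\partial_t$, a short calculation shows that $\omega_u^{gh}$ calibrates it, so the positive Gauss lift at that point is $-u \in S^2 \subset \R^3$. Combined with the classical description of $\Sc$ as a $4$--punctured sphere, this shows that the Gauss lift is a smooth immersion from $\Sc$ into $S^2$ whose image fills out $S^2$ minus the four directions $\{p_i/d : i = \pm 1, \pm 2\}$, and that it is in fact a diffeomorphism onto its image; truncating at $\EBall{T}$ then removes $\epsilon$--balls around these four points for $T_0$ sufficiently large. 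The main technical step is transferring this conclusion from the flat model $(\R^3 \times S^1, g_m)$ to $(X_d, g_d)$: I expect this to be the most delicate piece of bookkeeping, but no genuinely new ideas should be required, since \cref{lemma: metric estimates near 0} gives $g_d = g_m + O(|x|/d^2)$ in $C^1$ on $\EBall{T} \times S^1$, so the positive Gauss lift computed with $g_d$ is an $O(d^{-1})$ perturbation in $C^1$ of the flat one and therefore remains a diffeomorphism onto a slightly perturbed open set still contained in the complement of the $\epsilon$--balls.
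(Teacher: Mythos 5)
Your proposal is correct and follows essentially the same route as the paper, which gives no separate argument beyond declaring the proposition a straightforward consequence of \cite{Kapouleas2011}*{Section 5} (symmetries, exponentially decaying cylindrical ends, Gauss map a diffeomorphism onto the four-punctured sphere) transferred to $X_d$ through the $\G$--equivariant trivialization of \cref{lemma: exp gauge} and the metric estimates of \cref{lemma: metric estimates near 0}, precisely the ingredients you assemble. One cosmetic point: the five listed symmetries of $\Sc$ are not literally the restrictions of \eqref{eqn: G in R3xS1} to $\{x_3=0\}\times S^1$ (e.g.\ the restriction of $R_1$ is the composition of the first and third listed maps), but the groups they generate coincide, so your part (1) argument is unaffected.
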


\begin{remark}
Note that suitable portions of the asymptotic cylinders $C^{i}$ coincide with pieces of the $S^1$--invariant minimal discs corresponding to the segments $[0,p_i]$, which are holomorphic with respect to the complex structure corresponding to $p_i/d$. 
    % Note that the asymptotic cylinders $C^{i}_{[t_0,T-\delta]}$ are pieces of the $S^1$-invariant minimal spheres corresponding to the segments $[0,p_i]$, which are holomorphic with respect to the complex structure corresponding to the direction of the point $p_i$. 
\end{remark}

\subsection{Construction of the initial surface}\label{sec: construction initial surface} We are now ready to provide the construction of the initial surface in $X_d$. Let $T_1<T_2<T<d$ be large enough so that \cref{prop: properties of Scherk} applies and let $\chi_{T_1,T_2}$ be a cut-off function such that
\[
\chi_{T_1,T_2}(t)=\begin{cases} 
      1 & t\leq T_1 \\
      0 & t\geq T_2 
   \end{cases},
\qquad \abs{\nabla\chi_{T_1,T_2}} + \abs{\nabla^2\chi_{T_1,T_2}} \leq \frac{C}{T_2-T_1},
\]
for some positive constant $C$ independent of $T_1,T_2$. We then define $\ScIn$ as the surface in $X_d$ given as follows:
\begin{itemize}
    \item In $\pi^{-1}(\EBall{T_1})$, $\ScIn$ coincides with $\Sc_{T}$, as given in \cref{sec: Scherk's des}. This is the green region in \cref{fig: proj initial surface}. The model for this piece of $\ScIn$ is the Scherk's surface naturally embedded in $\R^2\times\{0\}\times S^1\subset\R^3\times S^1$ (cfr. \cref{sec: linear analysis on Scherk} below).

    \item In $\pi^{-1}(\EBall{T_2}\setminus \EBall{T_1})$, $\Sc_T$ is the union of the graphs of $\nu_i$, as defined in \cref{prop: properties of Scherk}, for $i=\pm1,\pm2$. We define $\ScIn$ as the graph of:
   \[
        \chi\nu_i|_{ b_i(t,e^{i\phi})}:=\chi_{T_1,T_2}(t)\,  f(t,e^{i\phi})\, \widehat{p}_i^{\perp}.
    \]
    This is the orange region in \cref{fig: proj initial surface}. The model for this piece of $\ScIn$ is a totally geodesic cylinder $\R\times\{\underline{0}\}\times S^1$ in $(\R^3\times S^1, dt^2+g_{\R^3})$ (cfr. \cref{sec: linear analysis on cylinder} below).
    
    \item Outside of $\pi^{-1}(\EBall{T_2})$, $\ScIn$ consists of four pieces. Each piece is the $S^1$-invariant minimal surfaces corresponding to the segment $[T_2\frac{p_i}{d},p_i]$, for $i=\pm1,\pm2$. This is the blue region in \cref{fig: proj initial surface}. The model for this piece of $\ScIn$ is the cigar in the Taub--NUT space (cfr. \cref{sec: linear analysis on cigar} below).
\end{itemize}
\begin{remark}\label{remark: properties ScIn}
    It is clear that $\ScIn$ is a $\G$-invariant smooth submanifold of $X_d$, diffeomorphic to $S^2$ and with degree-1 (positive) Gauss lift. Moreover, it is also clear that the construction is independent from the choice of $T$ and only depends on $T_1, T_2$ as the notation suggests. 
\end{remark}

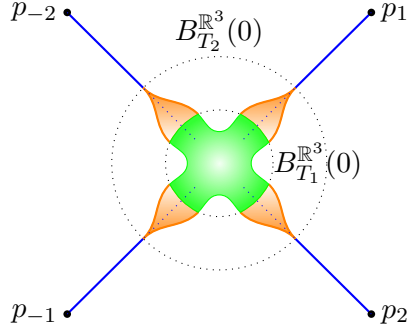
\begin{figure}
    \centering
 \begin{tikzpicture}

\draw[black, dotted] (0,0) circle (1.41421356237 cm);
\draw[black, dotted] (0,0) circle (0.70710678118 cm);

\draw[blue, thick] (2,2) to (1,1);
\draw[blue, thick] (-2,-2) to (-1,-1);
\draw[blue, thick] (2,-2) to (1,-1);
\draw[blue, thick] (-2,2) to (-1,1);

\draw[orange, shade, top color=orange!90] (1,1) to[out=225,in=45] (0.65,0.278) to[out=120, in=330] (0.278,0.65) to[out=45,in=225] (1,1);
\draw[orange, shade, top color=orange!90, rotate=90] (1,1) to[out=225,in=45] (0.65,0.278) to[out=120, in=330] (0.278,0.65) to[out=45,in=225] (1,1);
\draw[orange, shade, top color=orange!5, bottom color=orange!90, rotate=180] (1,1) to[out=225,in=45] (0.65,0.278) to[out=120, in=330] (0.278,0.65) to[out=45,in=225] (1,1);
\draw[orange, shade, top color=orange!5, bottom color=orange!90, rotate=270] (1,1) to[out=225,in=45] (0.65,0.278) to[out=120, in=330] (0.278,0.65) to[out=45,in=225] (1,1);

\draw[orange, thick] (1,1) to[out=225,in=45] (0.65,0.278);
\draw[orange, thick] (1,1) to[out=225,in=45] (0.278,0.65);
\draw[orange, thick, rotate=90] (1,1) to[out=225,in=45] (0.65,0.278);
\draw[orange, thick, rotate=90] (1,1) to[out=225,in=45] (0.278,0.65);
\draw[orange, thick, rotate=180] (1,1) to[out=225,in=45] (0.65,0.278);
\draw[orange, thick, rotate=180] (1,1) to[out=225,in=45] (0.278,0.65);
\draw[orange, thick, rotate=270] (1,1) to[out=225,in=45] (0.65,0.278);
\draw[orange, thick, rotate=270] (1,1) to[out=225,in=45] (0.278,0.65);

\draw[green, shade, outer color=green!80, inner color=green!5] (0.278,0.65) to[out=330, in=120]  (0.65,0.278) to[out=225,in=90] (0.42,0) to[out=270, in=135] (0.65,-0.278) to[out=240, in=30] (0.278,-0.65) to[out=135,in=0] (0,-0.42) to[out=180, in=45] (-0.278,-0.65) to[out=150, in=300] (-0.65,-0.278) to[out=45,in=270] (-0.42,0) to[out=90, in=315] (-0.65,0.278) to[out=60, in=210] (-0.278,0.65) to[out=315,in=180] (0,0.42) to[out=0, in=225] (0.278,0.65);

\node at (1.3,0) [] {$B^{\mathbb{R}^3}_{T_1}(0)$};
\node at (0,1.75) [] {$B^{\mathbb{R}^3}_{T_2}(0)$};

\fill[black] (2,2) circle (0.05cm) node[black,right] {$p_1$};
\fill[black] (-2,-2) circle (0.05cm) node[black,left] {$p_{-1}$};
\fill[black] (2,-2) circle (0.05cm) node[black,right] {$p_{2}$};
\fill[black] (-2,2) circle (0.05cm) node[black,left] {$p_{-2}$};

\draw[blue, dotted] (-2,2) to (-0.3,0.3);
\draw[blue, dotted] (-2,-2) to (-0.3,-0.3);
\draw[blue, dotted] (2,-2) to (0.3,-0.3);
\draw[blue, dotted] (2,2) to (0.3,0.3);
\end{tikzpicture}

    \caption{Schematic representation of the initial surface $\ScIn$ in $\R^3$.}
    \label{fig: proj initial surface}
\end{figure}

\subsection{Mean curvature estimates}\label{sec: mean curvature estiamte} In this subsection, we estimate the mean curvature of the initial surface $\ScIn$ constructed above. Since outside of $\pi^{-1}(\EBall{T_2})$ it coincides with four complex submanifolds (w.r.t. 4 distinct complex structures compatible with $g_d$), it is enough to estimate the mean curvature of $\ScIn$ inside of $\pi^{-1}(\EBall{T_1})$ and in the annulus $\pi^{-1}(\EBall{T_2}\setminus\EBall{T_1})$. We denote by $H^d_{T_1,T_2}$ the mean curvature of $\ScIn\subset(X_d,g_d)$. 

We will make extensive use of the classical fact that the mean curvature operator smoothly depends on the metric and on small perturbations of the given submanifold (cfr. \cite{White1991}*{Theorem 1.1}). 

\begin{lemma}\label{lemma: estimate mean curvature Sc} For $d$ big enough, in the region $\pi^{-1}(\EBall{T_1})$ we have:
\[
\bigabs{H^d_{T_1,T_2}}_{\Czeroa}=\bigO\left(\frac{T_1}{d^2}\right),
\]
where $\Czeroa=\Czeroa(N\ScIn)$.
\end{lemma}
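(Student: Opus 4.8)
The plan is to exploit that, inside $\pi^{-1}(\EBall{T_1})$, the surface $\ScIn$ is the unmodified Scherk surface and hence exactly minimal for a flat comparison metric, so that its mean curvature for $g_d$ only measures the deviation of $g_d$ from that flat metric. Concretely, by the construction in \cref{sec: construction initial surface}, inside $\pi^{-1}(\EBall{T_1})$ we have $\ScIn=\Sc_T$, and in the $\G$--equivariant trivialization of \cref{lemma: exp gauge} this is the image of a piece of the Scherk surface $\Sc\subset\R^2\times\{0\}\times S^1\subset\R^3\times S^1$, rescaled by $m=1+2d^{-1}$ so as to be minimal for the flat product metric $g_m=m\,g_{\R^3}+m^{-1}dt^2$ of \cref{lemma: metric estimates near 0}. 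Working in this trivialization, the mean curvature vector of $\Sc$ with respect to $g_m$ vanishes, so
\[
H^d_{T_1,T_2}=H^{g_d}(\Sc)-H^{g_m}(\Sc).
\]

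Next I would apply the standard perturbation theory for the mean curvature operator, in the form of its smooth dependence on the ambient metric (\cite{White1991}*{Theorem 1.1}): for a fixed immersion with uniformly controlled $\Ctwoa$ geometry, the assignment $g\mapsto H^g$ is smooth, in particular locally Lipschitz, from ambient metrics $\mathcal{C}^{1,\alpha}$--close to $g_m$ to normal vector fields in $\Czeroa$. This yields
\[
\bigabs{H^d_{T_1,T_2}}_{\Czeroa}\leq C\,\norm{g_d-g_m}_{\mathcal{C}^{1,\alpha}},
\]
the norms on the right being computed over $\EBall{T_1}\times S^1$. The point that requires care is that $C$ must be independent of both $d$ and $T_1$: this holds because the Scherk surface $\Sc$ has bounded geometry (uniform $\mathcal{C}^\infty$, in particular $\Ctwoa$, bounds on its second fundamental form and its covariant derivatives), so the relevant bounds on the piece $\Sc_T\cap\pi^{-1}(\EBall{T_1})$ are uniform even though this piece has large intrinsic diameter, and because \cref{lemma: metric estimates near 0} gives uniform-in-$d$ control of $g_m$ and $g_d$ in every $\mathcal{C}^{k,\alpha}$ norm for $d$ large.

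Finally I would insert the metric estimates. On $\EBall{T_1}$ one has $\abs{x}\leq T_1$, so \cref{lemma: metric estimates near 0} gives $\abs{g_d-g_m}_{\Czero}=\bigO(T_1 d^{-2})$, $\abs{\nabla(g_d-g_m)}_{\Czero}=\bigO(d^{-2})$ and $\abs{\nabla^2(g_d-g_m)}_{\Czero}=\bigO(d^{-3})$; the last two bounds together control the $\Czeroa$--seminorm of $\nabla(g_d-g_m)$ by $\bigO(d^{-2})$. Since $T_1>T_0$ is bounded below, $d^{-2}=\bigO(T_1 d^{-2})$, whence $\norm{g_d-g_m}_{\mathcal{C}^{1,\alpha}}=\bigO(T_1 d^{-2})$, and combining with the Lipschitz estimate gives the claim.

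The only genuine difficulty is the uniformity, in $d$ and in the size parameter $T_1$, of the Lipschitz constant $C$ in the second step; this is handled by the bounded geometry of the fixed Scherk surface together with the uniform estimates of \cref{lemma: metric estimates near 0}. Everything else is routine, and the same scheme — comparison against the relevant flat or Taub--NUT model plus \cref{lemma: metric estimates near 0} — will presumably govern the companion estimate in the annulus $\pi^{-1}(\EBall{T_2}\setminus\EBall{T_1})$, where one must additionally account for the cut-off $\chi_{T_1,T_2}$.
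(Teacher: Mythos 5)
Your proposal is correct and follows essentially the same route as the paper's own proof: decompose $H^d_{T_1,T_2}$ against the mean curvature for the flat comparison metric $g_m$ (which vanishes since $\ScIn$ coincides with the Scherk surface, rescaled so as to be minimal for $g_m$, in $\pi^{-1}(\EBall{T_1})$), then control the difference via the smooth dependence of the mean curvature operator on the ambient metric \cite{White1991} together with \cref{lemma: metric estimates near 0}. The paper states this very tersely; your added discussion of the uniformity of the Lipschitz constant (bounded geometry of $\Sc$, uniform-in-$d$ metric estimates) makes explicit a point the paper leaves implicit, and your accounting of the $\mathcal{C}^{1,\alpha}$ norm of $g_d - g_m$ on $\EBall{T_1}$ via the $C^0$, $\nabla$ and $\nabla^2$ bounds is a correct instantiation of the same estimate.
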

\begin{proof}
    By the triangle inequality, the smoothness of the mean curvature operator with respect to the metric and \cref{lemma: metric estimates near 0} we have:
    \[
    \bigabs{H^d_{T_1,T_2}}_{\Czeroa}\leq \bigabs{H^d_{T_1,T_2}-H^m_{T_1,T_2}}_{\Czeroa}+\bigabs{H^m_{T_1,T_2}}_{\Czeroa}=\bigabs{H^d_{T_1,T_2}-H^m_{T_1,T_2}}_{\Czeroa}\leq C\frac{T_1}{d^2},
    \]
    where $H^m_{T_1,T_2}$ is the mean curvature of $\ScIn$ with respect to the flat metric $g_m$ defined in \cref{lemma: metric estimates near 0}. Here we also used the fact that $H^m_{T_1,T_2}=0$ because, in this region, $\ScIn$ coincides with the standard Scherk surface, $\Sc$, which is minimal with respect to the flat metric $g_m$, and the metric estimates .    
\end{proof}

\begin{lemma}\label{lemma: estmate mean curvature neck}
    For $d$ big enough, in the region $\pi^{-1}(\EBall{T_2}\setminus \EBall{T_1})$ we have:
    \[
    \bigabs{H^d_{T_1,T_2}}_{\Czeroa}=\bigO\left(e^{-T_1}\left(\frac{T_2}{d^2}+\frac{1}{T_2-T_1}+e^{-T_1}\right)\right),
    \]
    where $\Czeroa=\Czeroa(N\ScIn)$.
\end{lemma}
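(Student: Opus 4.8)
The plan is to estimate $H^d_{T_1,T_2}$ in the transition annulus $\pi^{-1}(\EBall{T_2}\setminus\EBall{T_1})$ by comparing with two reference configurations, exactly as in the proof of \cref{lemma: estimate mean curvature Sc}, but now tracking the extra smallness coming from the exponential decay of $f$ in \cref{prop: properties of Scherk}. In this region $\ScIn$ is, by construction, the graph over the cylinder $C^i$ of the cut-off normal field $\chi_{T_1,T_2}(t)\,f(t,e^{i\phi})\,\widehat p_i^\perp$; since $f$ is exponentially decaying and $t\geq T_1$ there, the graphing function and all its derivatives are $\bigO(e^{-T_1})$. I would write
\[
\bigabs{H^d_{T_1,T_2}}_{\Czeroa}\leq \bigabs{H^d_{T_1,T_2}-H^m_{T_1,T_2}}_{\Czeroa}+\bigabs{H^m_{T_1,T_2}}_{\Czeroa},
\]
where $H^m_{T_1,T_2}$ is the mean curvature of the \emph{same} graph computed with respect to the flat model metric $g_m=m\,g_{\R^3}+m^{-1}dt^2$. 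By \cref{lemma: metric estimates near 0}, $g_d-g_m=\bigO(T_2/d^2)$ with derivatives $\bigO(d^{-k-1})$ on the ball of radius $T_2$, and the mean curvature operator depends smoothly on the metric (\cite{White1991}*{Theorem 1.1}); crucially, since $H^m$ of a flat totally geodesic cylinder vanishes, the difference $H^d-H^m$ is controlled by the product of the metric perturbation with a quantity measuring how far $\ScIn$ is from that totally geodesic cylinder, i.e.\ by the $\Czeroa$-size of the graphing function, which is $\bigO(e^{-T_1})$. This yields the first term $\bigO\!\left(e^{-T_1}T_2/d^2\right)$.

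For the second term I would exploit that the un-cut-off graph of $f(t,e^{i\phi})\,\widehat p_i^\perp$ over $C^i$ is, by \cref{prop: properties of Scherk}(2), literally a piece of the Scherk surface $\Sc$, hence \emph{exactly} minimal for $g_m$ (after the harmless rescaling by $m$). Therefore $H^m_{T_1,T_2}$ is entirely an error term produced by the interpolation: it is supported in $\{T_1\leq t\leq T_2\}$ and, by Taylor-expanding the mean curvature operator in the graphing function around the exact Scherk solution, it is bounded by the $\Czerob$-norm of $(\chi_{T_1,T_2}-1)f$ together with its first two derivatives, where the worst contribution comes from the terms involving $\nabla\chi_{T_1,T_2}$ and $\nabla^2\chi_{T_1,T_2}$, which are $\bigO(1/(T_2-T_1))$ by the choice of cut-off, multiplied by $f=\bigO(e^{-T_1})$, plus the quadratic-in-$f$ remainder $\bigO(e^{-2T_1})$ coming from the nonlinearity of the minimal surface operator. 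Combining, $\bigabs{H^m_{T_1,T_2}}_{\Czeroa}=\bigO\!\left(e^{-T_1}\big(\tfrac{1}{T_2-T_1}+e^{-T_1}\big)\right)$, and adding the two contributions gives the claimed bound.

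The main obstacle is organising the nonlinear expansion of the mean curvature operator so that every term carries at least one factor of $e^{-T_1}$ and one factor of either $T_2/d^2$, $1/(T_2-T_1)$, or $e^{-T_1}$, without any term of size merely $\bigO(e^{-T_1})$ surviving. This requires using in an essential way that \emph{both} reference surfaces (the totally geodesic cylinder for the metric comparison, the exact Scherk surface for the $g_m$ comparison) are minimal, so that the zeroth-order term in each expansion vanishes and one genuinely gains the product structure; it is the standard but slightly delicate bookkeeping of a gluing estimate, and I would phrase it via the smooth dependence of $H$ on $(\text{metric},\text{graphing function})$ and the vanishing of $H$ at the two base configurations, rather than expanding the quasilinear operator by hand.
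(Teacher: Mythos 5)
Your proposal follows the same route as the paper: triangle inequality against the flat model metric $g_m$, then smoothness of the mean curvature operator for each term. There is one subtle error in the first-term estimate worth fixing. You infer the product structure $\bigabs{H^d_{T_1,T_2}-H^m_{T_1,T_2}}\lesssim \abs{g_d-g_m}\cdot\abs{\chi_{T_1,T_2}\nu_i}_{\Ctwoa}$ from the single fact that ``$H^m$ of a flat totally geodesic cylinder vanishes.'' That is not sufficient: if one only knew $H^m(C^i)=0$, the difference $H^d_{T_1,T_2}-H^m_{T_1,T_2}$ would still contain the zeroth-order contribution $H^d(C^i)-H^m(C^i)=H^d(C^i)$, which is $\bigO(\abs{g_d-g_m})$ with no factor of the graphing function. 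What makes the estimate go through is the \emph{second} vanishing $H^d(C^i)=0$, which holds because $C^i$ is the $S^1$--invariant surface over a line segment in the direction $p_i/d$, hence calibrated in $(X_d,g_d)$ by the Lotay--Oliveira classification recalled in \cref{sec:Gibbons:Hawking}. With both vanishings in hand, $H^d_{T_1,T_2}-H^m_{T_1,T_2}$ is the difference of two Taylor expansions around $C^i$ that each start at order one in $\chi_{T_1,T_2}\nu_i$, and the product bound follows from smooth dependence of the coefficients on the metric. (Because the paper ultimately sets $T_2-T_1=1$ and $e^{-T_1}=T_1/d^2$, the weaker $\bigO(T_2/d^2)$ bound on the first term would not affect \cref{prop: mean curvature estimates initial surface}; but the lemma as stated requires the sharper estimate.) Your Taylor expansion of the second term around the exact Scherk graph, rather than around $C^i$ as the paper does, is a harmless reorganisation yielding the same bound.
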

\begin{proof} 
    By the triangle inequality we have:
    \begin{align}\label{eqn: lemma mean curvature aux}
        \bigabs{H^d_{T_1,T_2}}_{\Czeroa}\leq \bigabs{H^d_{T_1,T_2}-H^m_{T_1,T_2}}_{\Czeroa}+\bigabs{H^m_{T_1,T_2}}_{\Czeroa}, 
    \end{align}
    where $H^m_{T_1,T_2}$ is the mean curvature of $\ScIn$ with respect to the metric $g_m$ defined in \cref{lemma: metric estimates near 0}.

    Since the mean curvature operator is smooth, we can use \cref{lemma: metric estimates near 0} and \cref{prop: properties of Scherk} to estimate the first term as follows:
    \[
    \bigabs{H^d_{T_1,T_2}-H^m_{T_1,T_2}}_{\Czeroa}\leq C\abs{g_d-g_m}\abs{\chi_{T_1,T_2}\nu}_{\Ctwoa}\leq C\frac{T_2}{d^2}e^{-T_1}.
    \]
    Now, writing the mean curvature operator for a graph over the minimal cylinder $C^i$ with respect to the metric $g_m$ as the sum of a linear operator $\L_{C^i}$ and non-linear terms $Q_{C^i}$, we can control, for every $i$, the second term of \cref{eqn: lemma mean curvature aux} as follows: 
    \begin{align*}
    \bigabs{H^m_{T_1,T_2}}_{\Czeroa}=&\bigabs{\L_{C^i}(\chi_{T_1,T_2}\nu_i)+ Q_{C^i}(\chi_{T_1,T_2}\nu_i)}_{\Czeroa}\\
    \leq& \bigabs{\nabla^2\chi_{T_1,T_2}\ast \nu_i+\nabla\chi_{T_1,T_2}\ast \nabla\nu_i}\\
    &+\bigabs{\chi_{T_1,T_2}\,(\L_{C^i}\nu_i+Q_{C^i}\nu_i)}+\bigabs{Q_{C^i}(\chi_{T_1,T_2}\nu_i)-\chi_{T_1,T_2}\,Q_{C^i}\nu_i}_{\Czeroa}\\
    \leq& C\frac{1}{T_2-T_1} e^{-T_1}+0+Ce^{-2T_1}, 
     \end{align*}
     where we used \cref{prop: properties of Scherk} and once again the fact that the Scherk surface is minimal with respect to $g_m$, i.e., $(\L_{C^i}\nu_i+Q_{C^i}\nu_i)=0$.
\end{proof}
We now choose, once and for all, $T_1, T_2$ such that $T_2-T_1=1$ and $T_1=W_0(d^2)$, where $W_0$ is the real part of the principal branch of the Lambert $W$ function. Implicitly, $T_1$ is defined by the equation $T_1e^{T_1}=d^2$. Under this choice of $T_1,T_2$, we denote $\ScIn$ by $\Sc^d$. With these choices, \cref{lemma: estimate mean curvature Sc} and \cref{lemma: estmate mean curvature neck} imply the following global estimate for the mean curvature of $\Sc^d$. 

\begin{proposition}\label{prop: mean curvature estimates initial surface} For $d$ big enough, the mean curvature, $H^d$, of $\Sc^d\subset(X^d,g_d)$ is such that:
\[
 \bigabs{H^d}_{\Czeroa}=\bigO\left(\frac{\log d}{d^2}\right), 
\] 
where $\Czeroa=\Czeroa(N\Sc^d)$
\end{proposition}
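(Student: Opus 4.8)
The plan is to deduce the global bound directly from \cref{lemma: estimate mean curvature Sc} and \cref{lemma: estmate mean curvature neck}. Indeed, by construction $\Sc^d$ coincides outside $\pi^{-1}(\EBall{T_2})$ with a union of four $S^1$--invariant surfaces, each holomorphic with respect to one of the complex structures $p_i/d$ and hence calibrated and minimal, so $H^d\equiv 0$ there. Thus only the contributions from $\pi^{-1}(\EBall{T_1})$ and from the annulus $\pi^{-1}(\EBall{T_2}\setminus\EBall{T_1})$ remain, and the whole point is to substitute the specific choice $T_2-T_1=1$, $T_1=W_0(d^2)$ into the two lemmas.

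The one elementary input needed is the asymptotics of the Lambert $W$ function. Since $T_1$ is defined by $T_1 e^{T_1}=d^2$, I would first record two consequences: (i) $e^{-T_1}=T_1/d^2$, directly from the defining equation; and (ii) $W_0(x)=\log x-\log\log x+o(1)$ as $x\to\infty$, so that $T_1=W_0(d^2)=2\log d-\log(2\log d)+o(1)=\bigO(\log d)$, whence $T_1/d^2=\bigO(\log d/d^2)$. I would also note in passing that, because $W_0$ grows only logarithmically, for $d$ large we have $T_0<T_1<T_2=T_1+1<\tfrac12 d$, so that \cref{prop: properties of Scherk} applies with $T=T_2$ as required in the construction of $\Sc^d$.

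With these in hand the rest is immediate. In $\pi^{-1}(\EBall{T_1})$, \cref{lemma: estimate mean curvature Sc} gives $\bigabs{H^d}_{\Czeroa}=\bigO(T_1/d^2)=\bigO(\log d/d^2)$. In the annulus $\pi^{-1}(\EBall{T_2}\setminus\EBall{T_1})$, \cref{lemma: estmate mean curvature neck} gives
\[
\bigabs{H^d}_{\Czeroa}=\bigO\!\left(e^{-T_1}\Big(\tfrac{T_2}{d^2}+\tfrac{1}{T_2-T_1}+e^{-T_1}\Big)\right);
\]
with $T_2-T_1=1$ the parenthesis is dominated by the constant term $1$ (the other two being $\bigO(\log d/d^2)$), so this reduces to $\bigO(e^{-T_1})=\bigO(T_1/d^2)=\bigO(\log d/d^2)$. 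Taking the maximum of the three contributions over $\Sc^d$ yields $\bigabs{H^d}_{\Czeroa}=\bigO(\log d/d^2)$.

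There is no genuine obstacle here: the analytic work has already been carried out in the two lemmas, and the only thing to be careful about is the bookkeeping with the Lambert $W$ asymptotics and the admissibility of $T_1,T_2$ in \cref{prop: properties of Scherk}. It is perhaps worth remarking that the choice $T_1=W_0(d^2)$ is precisely the one balancing the neck error $\sim e^{-T_1}$ against the interior error $\sim T_1/d^2$, which is why it appears and why the two contributions end up being of the same order.
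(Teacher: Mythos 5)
Your proof is correct and follows essentially the same approach the paper leaves implicit after \cref{lemma: estimate mean curvature Sc} and \cref{lemma: estmate mean curvature neck}: the surface is exactly minimal outside $\pi^{-1}(\EBall{T_2})$, and substituting $T_1=W_0(d^2)$, $T_2-T_1=1$ (so that $e^{-T_1}=T_1/d^2$ and $T_1=\bigO(\log d)$) reduces both lemma bounds to $\bigO(\log d/d^2)$. Your added observations — the admissibility check $T_0<T_1<T_2<\tfrac12 d$ for large $d$ and the balancing heuristic explaining the choice of $T_1$ — are accurate and match the paper's remark following the proposition.
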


\begin{remark*}
    Our choice of $T_1,T_2$ in terms of $d$ minimises the error $H^d$ in terms of the single parameter $d$.
\end{remark*}

\section{Perturbation to minimal surfaces}\label{sec: perturbation to minimal surface}   
In the previous section, for all $d$ large enough we have constructed an approximate minimal surface $\Sc^d\subset (X_d,g_d)$, in the sense that the mean curvature of $\Sc^d$ is arbitrarily small as $d\to+\infty$. We would like to deform $\Sc^d$ into an actual minimal surface using the Implicit Function Theorem. As usual in gluing constructions, this deformation needs some care, as $X_d$ degenerates as $d\to+\infty$.

The abstract argument goes as follows. Given $\Sc^d\subset (X_d,g_d)$ as above, we want to find a section $\nu$ of the normal bundle of $\Sc^d$, such that $\graph\nu$ in $X_d$ is minimal. The mean curvature of $\graph\nu$, denoted by $H_\nu$, satisfies the equation:
\[
H_\nu=H^d+\L \nu+ Q\nu,
\]
where $\L=\Delta^{\perp}+\mathcal{A}+\mathcal{R}$ is the linearized (Jacobi) operator, and $Q$ contains the non-linearities. By a quantitative version of the Implicit Function Theorem (see \cref{lemma: contraction map}), it is enough to find estimates uniform in $d$ for the inverse of $\L$ (see \cref{prop: uniform linear estimates}) and the non-linear term $Q$ (see \cref{prop: uniform nonlinearities estimates}) in suitable Banach spaces. Indeed, \cref{prop: mean curvature estimates initial surface} implies that $\Sc^d$ has mean curvature arbitrary small for $d\to\infty$, hence the uniform estimates mentioned above allow one to apply the quantitative Implicit Function Theorem \cref{lemma: contraction map} for any $d$ large enough to find a minimal surface close to $\Sc^d$.

In our case, the suitable Banach spaces are $\Cka_\G(N\Sc^d)=\left\{\nu\in\Cka(N\Sc^d):\text{$\nu$ is $\G$-equivariant} \right\}$ endowed with the obvious norm. Since $\Sc^d$ is $\G$-invariant, $H^d\in \Czeroa_\G(N\Sc^d)$ and therefore it makes sense to solve the equations in these spaces. 
\begin{remark*} Working with equivariant sections of the normal bundle is essential to prevent non-trivial bounded kernel and co-kernel of the linearized operator on the building blocks (extended substitute kernel and cokernel in the language of \cite{Kapouleas2011}). Without the symmetry assumption, the gluing problem becomes much more challenging because it is then necessary to introduce deformations of the initial surface $\Sc^d$ to compensate for the extended substitute cokernel. 
\end{remark*}

\subsection{The linear problem on the building blocks}\label{sec: linear problem on building blocks}
The building blocks for our gluing construction are the Scherk surface, as described in \cref{sec: Scherk's des}, a totally geodesic cylinder in $\R^3\times S^1$ and the cigar in the Taub--NUT space. Since we are working in a $\G$-equivariant setting, we will endow Scherk's surface, the totally geodesic cylinder and the cigar with natural induced actions.

% The terminology ``building blocks'' here has the following precise meaning. The pointed Gromov--Hasdorff limit of $(X_d, g_d, p_\ast)$ as $d\rightarrow\infty$ is Taub--NUT if $p_\ast\in \{ p_{\pm 1}, p_{\pm 2}\}$ and $\R^3\times S^1$ otherwise. The convergence is in fact in $C^\infty_{loc}$ in view of \cref{lemma: metric estimates near 0,lemma: metric estimates near punctures}. We will consider three such limits: $p_\ast\in \{ p_{\pm 1, p_{\pm 2}}$, $p_\ast=p\in \pi^{-1}(0)$ and $p_\ast$ lying along one of the segments 

\subsubsection{Scherk's surface}\label{sec: linear analysis on Scherk} Let $\Sc\subset \R^2\times\{0\}\times S^1\subset (\R^3\times S^1, dt^2+g_{\R^3})$ be the Scherk surface defined by \cref{eqn: eq Scherk}, let $\L_\Sc$ be the linearized operator of the mean curvature of $\Sc$ and let $\G$ be the subgroup of isometries of $(\R^3\times S^1, dt^2+g_{\R^3})$ generated by the maps given in \cref{eqn: G in R3xS1}. It is clear from \cref{sec: Scherk's des} that $\Sc$ is invariant under $\G$. 

\begin{proposition}\label{prop: linearized problem on Scherk}
    Let $\nu\in\Ctwoa_{loc}(N\Sc)$ be $\G$-equivariant, bounded and such that $\L_{\Sc}\nu=0$. Then $\nu\equiv0$.
\end{proposition}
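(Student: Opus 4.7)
The plan is to use the product structure of $\R^3\times S^1$ to split $\nu$ into two scalar components, reduce the resulting equations to spectral problems on the $4$-punctured sphere via the Gauss map, and close the argument with a small representation-theoretic check.

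Since $\Sc\subset\R^2\times\{0\}\times S^1$ and $\partial_{x_3}$ is a parallel Killing field of $\R^3\times S^1$ everywhere orthogonal to the totally geodesic hypersurface $\R^2\times\{0\}\times S^1$, the normal bundle of $\Sc$ in $\R^3\times S^1$ splits $\G$--equivariantly and orthogonally as $N\Sc=N^{(1)}\oplus\underline{\R}\,\partial_{x_3}$, where $N^{(1)}$ denotes the line bundle normal to $\Sc$ inside $\R^2\times S^1$. The second fundamental form of $\Sc\subset\R^3\times S^1$ takes values in $N^{(1)}$ and $\partial_{x_3}$ is parallel, so, writing $\nu=\nu^{(1)}+\nu_3\,\partial_{x_3}$, the equation $\L_{\Sc}\nu=0$ decouples into
\[
\L^{(1)}\nu^{(1)}=0, \qquad \Delta_{\Sc}\nu_3=0,
\]
with $\L^{(1)}=\Delta_{\Sc}+|A|^2$ the Jacobi operator of $\Sc$ inside $\R^2\times S^1$. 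Both components are bounded and $\G$--equivariant.

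For the $\partial_{x_3}$--component, the Gauss map of \cref{prop: properties of Scherk}(3) identifies $\Sc$ with $S^2$ minus four points; as $\Sc$ is minimal with no flat points, the identification is conformal, and by conformal invariance of harmonicity in dimension two $\nu_3$ pushes forward to a bounded harmonic function on $S^2\setminus\{4\text{ pts}\}$, which extends across the isolated punctures to a constant. The element $R_3\in\G$ preserves $\Sc$ set-wise but satisfies $(R_3)_*\partial_{x_3}=-\partial_{x_3}$, so equivariance forces this constant to vanish. For the in-plane component, the Gauss map is conformal with factor $\tfrac12|A|^2$, and a short computation using the conformal change of the Laplacian in dimension two gives, for $u=v\circ\mathbf{n}$,
\[
\L^{(1)}u=\tfrac12|A|^2\,\mathbf{n}^*\bigl((\Delta_{S^2}+2)v\bigr).
\]
Writing $\nu^{(1)}=u\,\mathbf{n}$ and combining boundedness with removable singularities at the four punctures promotes $v$ to a smooth solution of $(\Delta_{S^2}+2)v=0$ on the whole $S^2$, i.e.\ a first spherical harmonic $v=\langle v_0,\cdot\rangle$ with $v_0\in\R^3$. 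Finally, the $\G$--equivariance condition translates, after the sign from the pushforward of $\mathbf{n}$ cancels on both sides, into $\hat g\,v_0=v_0$ for every $g\in\G$, where $\hat g$ is the action on $\R^3=\operatorname{span}(\partial_{x_1},\partial_{x_2},\partial_t)$ induced by the pushforward. Explicit computation gives $\hat R_1=\operatorname{diag}(-1,1,-1)$, $\hat R_2=\operatorname{diag}(1,-1,-1)$, $\hat R_3=\operatorname{diag}(1,1,-1)$; their only common fixed vector is $v_0=0$, so $\nu^{(1)}\equiv 0$.

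The main obstacle I anticipate is the conformal conjugation/removable singularity step on $\Sc$: although standard in principle, one needs to verify that the conformal factor $\tfrac12|A|^2$ is nowhere zero on $\Sc$ and that the asymptotically cylindrical ends of $\Sc$ correspond under the Gauss map to isolated points of $S^2$ across which bounded solutions of $(\Delta_{S^2}+2)v=0$ extend. This is essentially contained in Kapouleas' analysis of the Jacobi operator of Scherk's surface, after which the remaining equivariance bookkeeping is rendered elementary by the richness of the discrete symmetry group $\G$.
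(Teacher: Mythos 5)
Your proposal is correct and follows essentially the same route as the paper: split $N\Sc$ into the in-plane and $\partial_{x_3}$ directions using that $\R^2\times\{0\}\times S^1$ is totally geodesic, conjugate by the conformal Gauss map to reduce to $(\Delta_{S^2}+2,\Delta_{S^2})$ on the four-punctured sphere, identify the bounded kernel with first spherical harmonics (respectively constants), and kill it using the anti-invariance under $R_1,R_2,R_3$. The only cosmetic difference is that the paper quotes Montiel--Ros for the four-dimensional bounded kernel where you re-derive it via removable singularities, and your sign bookkeeping ($\hat g v_0=v_0$) matches the paper's statement that the corresponding Jacobi fields are anti-invariant.
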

\begin{proof}
    Since $\Sc$ is embedded in $\R^2\times\{0\}\times S^1$, the normal bundle of $\Sc$ can be trivialized by $(\nu_1,\nu_2)$, where $\nu_1$ represents the unit normal of $\Sc$ as an hypersurface of $\R^2\times\{0\}\times S^1$ and $\nu_2=\partial_{x_3}$. Because $\R^2\times\{0\}\times S^1$ is totally geodesic in $\R^3\times S^1$, the second fundamental form of $\Sc$, which we denote by $\II_\Sc$, has no component in the $\nu_2$ direction. It follows that in this trivialization the linearized operator has the form:
    \begin{equation}\label{eqn: linearized Scherk}
    \L_{\Sc}=\frac{\abs{\II_\Sc}^2}{2}(\Delta_{S^2}+2,\Delta_{S^2}),
    \end{equation}
    where $\Delta_{S^2}$ denotes the (analysts') Laplacian with respect to the round metric on $S^2$. Here we are using that the Gauss map of $\Sc\subset \R^2\times\{0\}\times S^1$ is a diffeomorphism into $S^2$ with four punctures \cites{Kapouleas97,Kapouleas2011} and the classical fact that the Gauss map of a minimal surface is conformal with conformal factor $\frac{\abs{\II_\Sc}^2}{2}$. 

    From \cref{eqn: linearized Scherk}, we can deduce that the bounded kernel of $\L_\Sc$ is the four-dimensional vector space spanned by: $X_1\eqdef\scal{\nu_1}{\partial_{x_1}}\nu_1$, $X_2\eqdef\scal{\nu_1}{\partial_{x_2}}\nu_1$, $X_3\eqdef\scal{\nu_1}{\partial_{t}}\nu_1$ and $X_4\eqdef\nu_2$ (cfr. \cite{MontielRos1991}*{Corollary 15}). Now, every element of the bounded kernel of $\L_\Sc$ that is also $\G$-equivariant needs to be identically zero. Indeed, $(R_i)_\ast X_i=-X_i$ for $i=1,2$, $(R_1)_\ast X_3=-X_3$ and $(R_3)_\ast X_4=-X_4$.
\end{proof}

We now explain how the Scherk surface can be regarded as a ``building block'' for the initial surface constructed in the previous section. Let $\Sc^d\subset X_d$ be the surface constructed in \cref{sec: aprroximate minimal surface}. For every fixed $R>0$, we can use \cref{lemma: exp gauge} to $\G$-equivariantly identify $\pi^{-1}(\EBall{R})$ with $\EBall{R}\times S^1$. In this trivialization, the piece of initial surface $\Sc^d\cap \pi^{-1}(\EBall{R})\subset X_d$ coincides $\G$-equivariantly with the piece of Scherk surface $\Sc\cap (\EBall{R}\times S^1)\subset\R^3\times S^1$ for every $d$ large enough (cfr. \cref{sec: construction initial surface}). The normal bundle of $\Sc^d\cap \pi^{-1}(\EBall{R})$ can be $\G$-equivariantly identified with the normal bundle of $\Sc\cap (\EBall{R}\times S^1)$. Moreover, since \cref{lemma: metric estimates near 0} implies that the metric $g_d$ smoothly converges to the flat metric $dt^2+g_{\R^3}$, we see that the linearized operator $\L_{\Sc^d}$ of $\Sc^d\cap \pi^{-1}(\EBall{R})$ smoothly converges, as $d\to+\infty$, to the linearized operator $\L_\Sc$ of the piece of Scherk surface $\Sc\cap (\EBall{R}\times S^1)$.

% \color{blue}
% Non \`e tutto un po' troppo convoluto? Il punto \`e che il pointed Gromov--Haussdorf limit per $d\to\infty$ di $(X_d, g_d, p)$ ($p$ punto base nel cerchio $\pi^{-1}(0)$) \`e proprio $\R^3\times S^1$. La convergenza \`e in realt\`a $C^\infty_{loc}$ (cio\`e $C^\infty$ su compact subsets). La stessa convergenza $C^\infty_{loc}$ vale per le superfici $\Sc^d$ che converge per $d\to\infty$ a $\Sc$. Che gli operatori anche convergono \`e quindi chiaro. Non capisco che c'entra che il normal bundle sia triviale: il risultato finale sarebbe vero anche se non lo fosse!
% \color{black}

\subsubsection{Cigar in the Taub--NUT space}\label{sec: linear analysis on cigar} Let $L$ be the cigar in the Taub--NUT space and denote with $\L_L$ the Jacobi operator of $L$, i.e., the linearisation of the mean curavature operator for normal graphs over $L$. Recall that $L$ is the surface $\pi_{\TN}^{-1}(\{(x_1,0,0)\in\R^3: x_1\geq0\})$ in the space constructed via the Gibbons--Hawking ansatz with $\phi_{\TN}=l+\frac{1}{2\abs{x}}$ and $U=\R^3\setminus\{0\}$. It is well known that $L$ is holomorphic with respect to the complex structure associated to $\omega^{TN}_1=dx_1\wedge\theta+\phi_{\TN} dx_2\wedge dx_3$: it is in fact a coordinate line $\C\times\{0\}$ in the identification of $\TN$ with $\C^2$ \cite{LeBrun1991}. The name ``cigar'' is due to the fact that the induced metric is asymptotically cylindrical (though not with exponential decay).

\begin{proposition}\label{prop: linearized problem on cigar}
    Let $\nu\in\Ctwoa_{loc}(NL)$ be bounded and such that $\L_{L}\nu=0$. Then $\nu\equiv0$. 
\end{proposition}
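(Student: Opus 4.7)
My plan is to exploit the fact that the cigar $L$ is a holomorphic curve with respect to $J_1$ inside the Ricci-flat K\"ahler (indeed hyperk\"ahler) Taub--NUT manifold, and to reduce the problem to a Liouville-type statement on the holomorphic normal line bundle. First, I would invoke the Simons-type decomposition of the Jacobi operator of a complex curve in a Ricci-flat K\"ahler surface due to Micallef--Wolfson (see \cite{Micallef:Wolfson}): the Jacobi operator $\L_L$ coincides, up to a positive multiplicative constant, with $\bar\partial_L^*\bar\partial_L$, where the normal bundle $NL$ is regarded as a holomorphic line bundle via $J_1$ and $\bar\partial_L$ is its Cauchy--Riemann operator. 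In particular $\L_L$ is non-negative, and an integration by parts against $\nu$ on an exhaustion by compact subdomains, using the boundedness of $\nu$ together with elliptic estimates on the asymptotically cylindrical end of $L$ to handle the boundary terms, shows that the equation $\L_L\nu=0$ actually forces $\bar\partial_L\nu=0$; in other words, $\nu$ is a bounded holomorphic section of $NL$.

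Next, I would make the holomorphic data explicit via LeBrun's description recalled in \cref{sec: symmetries Xd}: the Taub--NUT space endowed with the complex structure $J_1$ is biholomorphic to $\C^2$ with coordinates $(w_1,w_2)$, in such a way that the cigar corresponds to the affine complex line $L=\{w_1=0\}$. Consequently $NL$ is holomorphically trivialised by the global frame $\partial_{w_1}$, and every holomorphic section is of the form $\nu=f(w_2)\,\partial_{w_1}$ for some entire function $f\co \C\rightarrow\C$.

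The final and main step is to establish the asymptotic estimate $|\partial_{w_1}|_{g_{\TN}}\rightarrow \infty$ along $L$ as $|w_2|\rightarrow \infty$, from which the boundedness of $\nu$ implies $|f(w_2)|\rightarrow 0$ at infinity and Liouville's theorem yields $f\equiv 0$. Geometrically, $\partial_{w_1}$ generates the holomorphic family of complex curves $\{w_1=\epsilon\}\subset \TN$, and the relation $w_1 w_2=w_3$ in LeBrun's description shows that, under the Gibbons--Hawking projection $\pi_{\TN}\co\TN\rightarrow\R^3$, the perturbed disc $\{w_1=\epsilon\}$ diverges to infinity in the $(x_1,x_2)$-direction at a rate proportional to $|w_2|$, so that the infinitesimal variation field $\partial_{w_1}$ must have Taub--NUT norm growing at least linearly in $|w_2|$ along $L$. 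The main technical obstacle is the rigorous justification of both this asymptotic lower bound and the integration by parts above, which are delicate because the cigar is only polynomially (rather than exponentially) asymptotically cylindrical; more quantitative control of the Taub--NUT metric in LeBrun's complex coordinates would make everything precise.
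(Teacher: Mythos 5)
Your overall architecture is the same as the paper's: reduce, via the Simons/Weitzenb\"ock identity and an integration by parts, a bounded Jacobi field to a bounded holomorphic section of $NL$, and then kill it by a Liouville argument against a global holomorphic frame whose norm blows up along the end. Where you genuinely diverge is in how the growth of the frame is obtained: the paper works entirely in the Gibbons--Hawking frame, solves the Cauchy--Riemann equation explicitly and exhibits the nowhere-vanishing holomorphic section $(\rho e^{-it}\phi_{\TN}^{1/2})(e_2-ie_3)$ with $\rho=\sqrt{x_1}e^{x_1}$, whose norm grows exponentially along the cigar; you instead use LeBrun's model and the variation of the curves $\{w_1=\epsilon\}$. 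Your version can in fact be made rigorous more cheaply than you suggest: the holomorphic moment map $w_3=\pi_\C\circ\pi_{\TN}$ vanishes identically on $L$, satisfies $\abs{dw_3}_{g_{\TN}}\le \sqrt{2}\,\phi_{\TN}^{-1/2}\le\sqrt{2}$, and $dw_3(\partial_{w_1})=w_2$ since $w_3=w_1w_2$; Cauchy--Schwarz then gives that the \emph{normal projection} of $\partial_{w_1}$ (which is what trivialises $NL$, and which your argument must use since only the normal component is relevant --- note $dw_3$ annihilates $TL$, so the bound does pass to the projection) has norm at least $\abs{w_2}/\sqrt{2}\to\infty$, and this is consistent with the paper's exponential rate once one recalls that $\abs{w_2}\sim\sqrt{x_1}e^{x_1}$ on the asymptotically cylindrical end. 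Two caveats: first, in the reduction step, handling the boundary terms ``by elliptic estimates'' as you state only yields a uniform bound $\int_L\abs{\bar\partial\nu}^2<\infty$, not vanishing; you still need the cutoff/absorption device the paper uses (exploiting the linear volume growth of the cigar and an iteration, precisely because the decay to the cylinder is only polynomial), or equivalently a good-radius/coarea argument after the $L^2$ bound. Second, your ``at least linear in $\abs{w_2}$'' growth, while an underestimate, is all Liouville needs, so the conclusion stands. With these points filled in, your proof is correct and offers a slightly more structural (algebro-geometric) route to the key growth estimate, at the price of invoking LeBrun's identification, whereas the paper's explicit ODE computation is self-contained and also records the singular section $\phi_{\TN}^{1/2}$ responsible for the nodal degeneration discussed in the subsequent remark.
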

\begin{proof}
We first show that it is enough to prove that bounded holomorphic sections of the normal bundle of $L\subset \TN$ are identically zero. Recall that a section $\nu$ of the (real) normal bundle is holomorphic if $\bar{\partial}\nu(X):=\nabla^\perp_{JX} \nu-J(\nabla^\perp_X \nu)=0$ for every $X$ tangent to $L$.

For every $0<T$, let $\chi:=\chi_{T,2T}$ be the cutoff function defined in \cref{sec: construction initial surface}. From the Weitzenb\"ock identity of \cite{Simons1968}*{Theorem 3.5.1}, we know that if $\L_L\nu=0$ then 
\[
0=-\int_L \langle \L_L\nu,\chi\nu\rangle=\int_{L} \langle \bar{\partial}^*\bar{\partial}\nu,\chi\nu\rangle.
\]
The section $\chi\nu$ is compactly supported and we can, therefore, freely integrate by parts to obtain the following chain of inequalities:
\begin{equation*}
	\int_{B_{T}} \abs{\bar{\partial}\nu}^2\leq \int_{L} \chi\abs{\bar{\partial}\nu}^2=-\int_L  \langle \bar{\partial}\nu,\nabla \chi\ast\nu\rangle\leq \norm{\nu}_{C^{0}}\left(\int_{B_{2T}\setminus B_{T}}\abs{\bar{\partial}\nu}^2\right)^{\frac{1}{2}}\left(\int_{B_{2T}\setminus B_{T}}\abs{\nabla\chi}^2\right)^{\frac{1}{2}},
\end{equation*}
where $B_{T}:=\pi^{-1}(\EBall{T})\cap L$. We can now use $\abs{\nabla\chi}\leq C T^{-1}$, the linear volume growth of the cigar and Peter-Paul inequality to obtain
\[
\int_{B_{T}} \abs{\bar{\partial}\nu}^2\leq \frac{C^2\norm{\nu}^2_{C^{0}}}{\epsilon^2 T}+\epsilon^2 \left(\int_{B_{2T}}\abs{\bar{\partial}\nu}^2\right).
\]
Choosing $\epsilon>0$ sufficiently small, an iteration scheme (cfr. \cite{Simon1996}*{Chapter 2.8; Lemma 2} which, although stated for balls in $\R^n$, we can apply because $L$ has bounded geometry) implies:
\[
\int_{B_{T}} \abs{\bar{\partial}\nu}^2\leq \frac{C'\norm{\nu}^2_{C^{0}}}{\epsilon^2 T}.
\]
By letting $T\to\infty$ we conclude that bounded solutions of $\L_L\nu=0$ also solve $\bar{\partial}\nu=0$.

We now prove that bounded holomorphic sections of the normal bundle are trivial. First, we construct a global holomorphic trivialization, which corresponds to $\partial_{z_2}$ under the identification of the cigar with $\C\times\{0\}\subset\C^2$. Consider the orthonormal frame on $\TN$ defined by $e_0:=\phi_{\TN}^{1/2}\partial_t$, $e_i=\phi_{\TN}^{-1/2}\tilde{\partial}_{x_i}$ for $i=1,2,3$, where $\partial_t$ is the generator of the $S^1$-action and $\tilde{\partial}_{x_i}$ is the horizontal lift to $\TN$ of $\partial_{x_i}$: $e_0,e_1$ and $e_2,e_3$ span the tangent and, respectively, normal space of $L$. Moreover, the complex structure $J$ acts on this frame by $J e_0=-e_1$, $J e_1=e_0$, $J e_2=e_3$ and $J e_3=-e_2$. Hence, $e_2-ie_3$ is of type $(1,0)$ and all the sections of the normal bundle of type $(1,0)$ are of the form $a(e_2-ie_3)$ for some $a\in C^{\infty}(L,\C)$. Using the first structure equations, we can compute $\nabla^\perp$ in this frame (see for instance \cite{Trinca2022}*{Lemma 2.1}):
\begin{equation*}
    \begin{aligned}
        \nabla^\perp_{e_0} e_2&=\frac{1}{2\phi^{3/2}}\DerParz{\phi}{x_1} e_3, \hspace{8pt} \qquad \qquad \nabla^\perp_{e_1} e_2=0,\\
        \nabla^\perp_{e_0} e_3&=-\frac{1}{2\phi^{3/2}}\DerParz{\phi}{x_1} e_2,   \qquad \qquad \nabla^\perp_{e_1} e_3=0.
    \end{aligned} 
\end{equation*}
From these equations, we deduce that $a(e_2-ie_3)$ is holomorphic if and only if
\begin{equation*}\label{eqn: holomorphic normal}
    e_1(a\phi^{-1/2})=-ie_0(a\phi^{-1/2}).
\end{equation*}
The solution $a=\phi^{1/2}$ yields an holomorphic section that blows up at the origin. One can verify instead that $(\rho e^{-it}\phi^{1/2}) (e_2-ie_3)$ is a nowhere vanishing section of the normal bundle on the whole of $L$, where $\rho=\sqrt{x_1}e^{x_1}$. Since it grows (superexponentially) at infinity, we conclude that there are no bounded holomorphic sections of the normal bundle.
\end{proof}

\begin{remark*}
Constant multiples of the singular holomorphic section $a=\phi^{1/2}$ correspond to the family of holomorphic submanifolds corresponding to $x_2+ix_3=c$, $c\in \C$: the fact that the holomorphic section is singular at the origin corresponds to the fact that the generic fibre of this family is a cylinder $\C^\ast$ while the central fibre (of which the cigar is an irreducible component) is nodal. 
\end{remark*}

We now explain how the cigar in the Taub--NUT space can be regarded as a ``building block'' for the initial surface constructed in the previous section. Let $\Sc^d\subset X_d$ be the surface constructed in \cref{sec: aprroximate minimal surface}. For every fixed $R>0$ and $i=\pm1,\pm2$, we can use \cref{lemma: metric estimates near punctures} to identify $\pi^{-1}(\EBallc{R}{p_i})\subset X_d$ with $\pi_{\TN}^{-1}(\EBall{R})$ in the Taub--NUT space. Given $d$ large enough, each piece of initial surface $\Sc^d\cap \pi^{-1}(\EBallc{R}{p_i})\subset X_d$ can be identified with a piece of the cigar $L\cap \pi_{\TN}^{-1}(\EBall{R})$ in the Taub--NUT space (cfr. \cref{sec: construction initial surface}). We can identify the normal bundle of $\Sc^d\cap \pi^{-1}(\EBallc{R}{p_i})$ with the normal bundle of $L\cap \pi_{\TN}^{-1}(\EBall{R})$. Moreover, from \cref{lemma: metric estimates near punctures} we can see that the linearized operator $\L_{\Sc^d}$ on $\Sc^d\cap \pi^{-1}(\EBallc{R}{p_i})$ smoothly converges, as $d\to\infty$, to the linearized operator $\L_L$ on $L\cap\pi_{\TN}^{-1}(\EBall{R})$.

\subsubsection{Totally geodesic cylinder}\label{sec: linear analysis on cylinder} Let $C$ be the totally geodesic cylinder $\R\times \{0\}\times S^1\subset (\R^3\times S^1, dt^2+g_{\R^3})$,  let $\L_C$ be the Jacobi operator of $C$ and let $\G_C$ be the subgroup of isometries of $\R^3\times S^1$ which is generated by the maps:
\begin{align*}
R_1(x_1,x_2,x_3,e^{it})&=(x_1,-x_2,x_3,e^{-it})\\
R_2(x_1,x_2,x_3,e^{it})&=(x_1,x_2,-x_3,e^{-it})\\
R_3(x_1,x_2,x_3,e^{it})&=(x_1,x_2,x_3,-e^{it}).
\end{align*}
Obviously, $C$ is invariant under $\G_C$.

\begin{proposition}\label{prop: linearized problem on cylinder}
    Let $\nu\in\Ctwoa_{loc}(NC)$ be a $\G_C$--equivariant bounded solution to $\L_{C}\nu=0$. Then $\nu\equiv0$.
\end{proposition}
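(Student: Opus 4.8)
The plan is to reduce the statement to a Liouville-type fact for harmonic functions on a flat cylinder and then use the symmetry group $\G_C$ to eliminate the remaining finite-dimensional space of solutions, mirroring the structure of the proof of \cref{prop: linearized problem on Scherk}.

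First I would identify the Jacobi operator explicitly. Since the ambient space $(\R^3\times S^1, dt^2+g_{\R^3})$ is flat and $C$ is totally geodesic, both the Simons term (which is built from $\II_C$) and the ambient curvature term in $\L_C=\Delta^\perp+\mathcal{A}+\mathcal{R}$ vanish, so $\L_C=\Delta^\perp$, the connection Laplacian on the normal bundle $NC$. The bundle $NC$ is globally trivialized by the coordinate vector fields $\partial_{x_2},\partial_{x_3}$, which are parallel for the ambient connection along $C$ and normal to $C$, hence parallel for $\nabla^\perp$. In this trivialization $\L_C$ becomes two uncoupled copies of the scalar Laplacian $\Delta_C=\partial_{x_1}^2+\partial_t^2$ on the flat cylinder $\R_{x_1}\times S^1_t$: writing $\nu=u_2\,\partial_{x_2}+u_3\,\partial_{x_3}$, the equation $\L_C\nu=0$ is equivalent to $\Delta_C u_2=\Delta_C u_3=0$.

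Next I would show that a bounded solution of $\Delta_C u=0$ on $\R\times S^1$ is constant. By elliptic regularity $u$ is smooth; expanding in a Fourier series in $t$, $u(x_1,t)=\sum_{n\in\Z}c_n(x_1)e^{int}$, each coefficient solves $c_n''=n^2 c_n$. For $n=0$ this forces $c_0$ to be affine, hence constant by boundedness; for $n\neq 0$ the solution space is spanned by $e^{\pm\abs{n}x_1}$, so boundedness on all of $\R$ forces $c_n\equiv 0$. Therefore the bounded kernel of $\L_C$ is exactly the two-dimensional space of constant sections $\{a\,\partial_{x_2}+b\,\partial_{x_3}:a,b\in\R\}$, i.e. the Jacobi fields generated by the ambient translations in the $x_2$ and $x_3$ directions.

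Finally I would impose $\G_C$-equivariance on such a constant section. Along $C$, the differential of $R_1$ sends $\partial_{x_2}\mapsto-\partial_{x_2}$ and fixes $\partial_{x_3}$, while that of $R_2$ fixes $\partial_{x_2}$ and sends $\partial_{x_3}\mapsto-\partial_{x_3}$; hence equivariance of $a\,\partial_{x_2}+b\,\partial_{x_3}$ forces $a=0$ (from $R_1$) and $b=0$ (from $R_2$), so $\nu\equiv 0$. I do not expect a genuine obstacle here: this is the most elementary of the three building-block linearized problems, the only point needing a short argument being the Liouville statement on the flat cylinder, with everything else amounting to bookkeeping with the explicit generators of $\G_C$.
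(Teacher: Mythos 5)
Your proposal is correct and follows essentially the same route as the paper: trivialize $NC$ by $(\partial_{x_2},\partial_{x_3})$, observe $\L_C$ decouples into two copies of the flat cylinder Laplacian so the bounded kernel consists of constant sections, and then use $(R_1)_\ast\partial_{x_2}=-\partial_{x_2}$ and $(R_2)_\ast\partial_{x_3}=-\partial_{x_3}$ to kill those constants. You supply a bit more detail (the vanishing of the curvature and Simons terms, and the Fourier-mode Liouville argument) than the paper, which takes these steps as clear, but there is no substantive difference in the argument.
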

\begin{proof}
The normal bundle of $C$ is naturally trivialized by $(\partial_{x_2},\partial_{x_3})$ and hence, in this trivialization, the linearized operator for the mean curvature has the form:
\[
\L_{C}=(\Delta_C,\Delta_C),
\]
where $\Delta_C=\partial_{x_1}^2+\partial_t^2$ is the standard Laplace operator on a flat cylinder. It is then clear that the bounded kernel of $\L_{C}$ consists of constant functions. Since $(R_i)_{\ast}\partial_{x_i}=-\partial_{x_i}$ for $i=1,2$, we conclude that such constants need to be zero.
\end{proof}
We now explain how the totally geodesic cylinder can be regarded as a ``building block'' for the initial surface constructed in the previous section. Let $\Sc^d\subset X_d$ be the surface constructed in \cref{sec: aprroximate minimal surface}. For every $0<R<d$, we can use \cref{lemma: exp gauge} to $\G$-equivariantly identify $\pi^{-1}(\EBall{R})$ with $\EBall{R}\times S^1$. Pick now $c_i:=\lambda_d p_i\in \R^3$ for $i=\pm1,\pm2$ such that $\lambda_d\to\infty$ and $\lambda_d-d\to\infty$. For any fixed $k>0$, consider the neighbourhoods $\EBallc{k}{c_i}\times S^1\subset \EBall{R}\times S^1$ around each $c_i$. This makes sense for some $R,d$ big enough. Each piece of initial surface $\Sc^d\cap\pi^{-1}(\EBallc{k}{c_i})\subset X_d$ can be identified with $C\cap (\EBall{k}\times S^1)\subset \R^3\times S^1$, up to small perturbations (cfr. \cref{sec: construction initial surface}). The action of $\G$ is transitive on the four cylinders corresponding to $i=\pm 1, \pm 2$ and the stabiliser of one such cylinder is isomorphic to the subgroup $\G_C$. We can therefore consider only one piece of totally geodesic cylinder in $\EBall{k}\times S^1$ invariant under the action of $\G_C$. We can $\G_C$--equivariantly identify the normal bundle of $\Sc^d\cap \pi^{-1}(\EBallc{k}{c_i})$ with the normal bundle of $C\cap (\EBall{k}\times S^1)$. Moreover, from \cref{lemma: metric estimates near 0} we can deduce that the linearized operator $\L_{\Sc^d}$ on $\Sc^d\cap \pi^{-1}(\EBallc{k}{c_i})$ smoothly converges, as $d\to\infty$, to the linearized operator $\L_C$ on the piece of cylinder $C\cap\EBall{k}\times S^1$.

\subsection{The linear problem}
In this subsection, we prove the main result about the Jacobi operator $\L$ of the initial surface $\Sc^d$, i.e., that it has uniformly bounded inverse when restricted to $\G$--equivariant sections. We begin with the following preliminary result.

\begin{proposition}[Uniform Schauder estimates]\label{prop: uniform elliptic estimates}
    Let $\alpha\in(0,1)$. For every $d$ big enough, there exists a constant $C>0$ independent from $d$ such that:
    \[
    \norm{\nu}_{\Ctwoa}\leq C \left(\norm{\L\nu}_{\Czeroa}+\norm{\nu}_{\Czero}\right)
    \]
    for every $\nu\in\Ctwoa(N\Sc^d)$.
\end{proposition}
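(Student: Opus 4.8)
\emph{Plan.} The estimate is of the standard ``uniform local Schauder estimate plus patching'' type for gluing constructions. The idea is to cover $\Sc^d$ by coordinate charts of a size $r_0$ independent of $d$, on each of which the intrinsic geometry of $\Sc^d$ and the coefficients of $\L$ are controlled uniformly in $d$, apply the classical interior Schauder estimate on each chart, and then sum up, using that the covering has multiplicity bounded independently of $d$. The point that makes this work is that, although $\vol(\Sc^d)\to\infty$ as $d\to\infty$, the surface $\Sc^d$ is assembled out of the three model building blocks of \cref{sec: linear problem on building blocks} — the Scherk surface, the totally geodesic cylinder, and the cigar — each of which has bounded geometry, and on each region $\L=\L_{\Sc^d}$ converges smoothly as $d\to\infty$ to the corresponding model operator $\L_\Sc$, $\L_C$, or $\L_L$, as recorded at the end of \cref{sec: linear analysis on Scherk,sec: linear analysis on cigar,sec: linear analysis on cylinder}.

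\emph{Step 1: uniform control of the geometry and of $\L$.} Writing $\L=\Delta^\perp+\mathcal{A}+\mathcal{R}$, I would first check, region by region, that there exist $r_0>0$ and a constant, both independent of $d$, such that $\Sc^d$ has injectivity radius $\geq r_0$, bounded second fundamental form and bounded intrinsic curvature, and such that the zeroth-order coefficients $\mathcal{A}$ (a contraction with $\II_{\Sc^d}$) and $\mathcal{R}$ (built from the ambient curvature of $g_d$ restricted to $\Sc^d$) are bounded in $\Czeroa$. In $\pi^{-1}(\EBall{T_1})$ this follows from \cref{lemma: metric estimates near 0} together with the fact that there $\Sc^d$ is literally the Scherk surface, which has bounded $\II$; in the neck $\pi^{-1}(\EBall{T_2}\setminus\EBall{T_1})$ it follows from \cref{prop: properties of Scherk} — after passing to the trivialization of \cref{lemma: exp gauge}, $\Sc^d$ is a $C^\infty$-small normal graph over a flat cylinder there, so the local geometry is $d$-independent even though $T_1,T_2\to\infty$ — together again with \cref{lemma: metric estimates near 0}; and outside $\pi^{-1}(\EBall{T_2})$ it follows from \cref{lemma: metric estimates near punctures}, using that the cigar has bounded geometry (its induced metric is globally quasi-isometric to a flat half-cylinder, with curvature and all its covariant derivatives bounded) even though it is only asymptotically, not exponentially, cylindrical and the relevant piece of $\Sc^d$ has intrinsic diameter of order $d$.

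\emph{Step 2: good covering, localization, and summation.} Next I would cover $\Sc^d$ by domains $\{U_\beta\}$ of geodesic normal coordinates (equivalently, by graphs over fixed charts in the three building blocks) of radius comparable to $r_0$, arranged so that the multiplicity of $\{U_\beta\}$ is bounded independently of $d$ and $\beta$ and so that any two points of $\Sc^d$ at distance $\leq r_0/2$ lie in a common $U_\beta$; on each $U_\beta$ one trivializes $N\Sc^d$ by a frame with bounded covariant derivatives (the explicit frames of the proofs of \cref{prop: linearized problem on Scherk,prop: linearized problem on cylinder} on the Scherk and cylinder parts, the frame $\{e_2,e_3\}$ of the proof of \cref{prop: linearized problem on cigar} on the cigar part). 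By Step 1, in each $U_\beta$ the operator $\L$ is uniformly elliptic with a $d$-independent ellipticity constant and has coefficients bounded in $\Czeroa$ uniformly in $d$ and $\beta$, so the classical interior Schauder estimates (applied componentwise in the trivialization, the off-diagonal lower-order terms being absorbed into the inhomogeneity) give, on the concentric ball $U_\beta'$ of half the radius,
\[
\norm{\nu}_{\Ctwoa(U_\beta')}\leq C\left(\norm{\L\nu}_{\Czeroa(U_\beta)}+\norm{\nu}_{\Czero(U_\beta)}\right)
\]
with $C$ independent of $d$ and $\beta$. Taking suprema over $\beta$ and using the bounded multiplicity controls $\norm{\nu}_{\Czero}+\norm{\cov\nu}_{\Czero}+\norm{\cov^2\nu}_{\Czero}$ and the $\alpha$-Hölder seminorm of $\cov^2\nu$ over pairs of points within distance $r_0/2$; for pairs of points farther apart, the contribution to the global Hölder seminorm is at most $\mathrm{const}\cdot\norm{\cov^2\nu}_{\Czero}$ by the definition of the seminorm, hence again bounded by the right-hand side. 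Combining these estimates yields the claim.

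\emph{Expected main obstacle.} The only genuinely delicate point is the uniformity of the constants in Steps 1 and 2 as $d\to\infty$, i.e. verifying that the degeneration of $(X_d,g_d)$ does not spoil the bounded-geometry picture: on the cigar region because the relevant piece of $\Sc^d$ is long (intrinsic diameter $\sim d$) so the number of charts grows, while their individual geometry does not; and on the neck because $T_1,T_2\to\infty$, which is handled by working in the gauge of \cref{lemma: exp gauge}, where $\Sc^d$ is an exponentially small normal graph over an honest flat cylinder. Once these uniformities are in place, the patching of local Schauder estimates is routine.
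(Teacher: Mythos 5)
Your proposal is correct and takes essentially the same route as the paper: the paper's proof simply asserts that $\Sc^d$ has bounded geometry as a submanifold (uniformly in $d$) because it is assembled from small $C^\infty$-perturbations of the three model building blocks, and that the global estimate then follows by taking suprema of local interior Schauder estimates. You have spelled out exactly this argument, adding the standard details (uniform control of $\II$, the ambient curvature, the injectivity radius; a good covering of bounded multiplicity; the treatment of the far-apart Hölder seminorm) that the paper leaves implicit.
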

\begin{proof}
The estimate is obtained by taking supremums of the analogous local interior estimates and therefore it suffices to argue that $\Sc^d$ has bounded geometry as a submanifold of $X_d$ (i.e., $\Sc^d$ has bounded geometry as a Riemannian manifold and its second fundamental form and its covariant derivatives are uniformly bounded on balls of definite size). This is clear since, as explained, $\Sc^d$ is made up of pieces which for $d$ large enough can be regarded as small $\C^\infty$--perturbations of the Scherk surface and totally geodesic cylinder in $\R^3\times S^1$ and the cigar in the Taub--NUT space.
\end{proof}
% which can be proved  this estimate we consider three pieces of $\Sc_d$ (not necessarily connected) separately. We can then combine the "local" estimates to global ones in the usual manner.
% Let $k$ be a fixed positive constant. In $\pi^{-1}(\EBall{k})$, the estimate follows from the standard Schauder estimates on compact pieces of Scherk: $\Sc\subset(\R^3\times S^1,g_m)$, up to small perturbations (cfr. \cref{lemma: metric estimates near 0} and \cref{sec: linear analysis on Scherk}). The same argument can be applied in $\pi^{-1}(B^{\R^3}_{k}(p_i))$, for every $i=\pm1,\pm2$, where the role of the Scherk surface in $\R^3\times S^1$ is now played by the cigar in the Taub--NUT space (cfr. \cref{lemma: metric estimates near punctures} and \cref{sec: linear analysis on cigar}). 
% Finally, the estimates on the transition region, i.e., the complementary of $\pi^{-1}(\EBall{k})\cup_{i=\pm1,\pm2} \pi^{-1}(B^{\R^3}_{k}(p_i))$, can be recovered as follows. If $k$ was chosen big enough, $\Sc^d$ can be identified with four copies of a small perturbation (i.e., is a graph with small norm) of a totally geodesic cylinder $[-A_d,A_d]\times\{\underline{0}\}\times S^1\subset (\R^3\times S^1, dt^2+g_{\R^3})$, for some $0<A_d\xrightarrow{d\to\infty}+\infty$ and $d$ big enough (cfr. \cref{lemma: metric estimates near 0} and \cref{sec: linear analysis on cylinder}). By covering the region with sets of uniform size, we get the desired uniform estimates.
% \end{proof}

We can now prove the main results of this subsection.
\begin{proposition}[Uniform estimate for the inverse]\label{prop: uniform linear estimates}
    For every $d$ large enough, there exists a constant $C>0$ independent from $d$ such that
    \[
    \norm{\nu}_{\Ctwoa}\leq C\norm{\L\nu}_{\Czeroa}
    \]
    for every $\nu\in\Ctwoa_\G(N\Sc^d)$.
\end{proposition}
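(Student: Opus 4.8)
The plan is to argue by contradiction using a blow-up/compactness argument against the building blocks. Suppose the estimate fails: then there is a sequence $d_n\to\infty$ and sections $\nu_n\in\Ctwoa_\G(N\Sc^{d_n})$ with $\norm{\nu_n}_{\Ctwoa}=1$ but $\norm{\L\nu_n}_{\Czeroa}\to 0$. By \cref{prop: uniform elliptic estimates} (the uniform Schauder estimates), it suffices to show that $\norm{\nu_n}_{\Czero}\to 0$ to reach a contradiction. So assume instead that, after passing to a subsequence, $\norm{\nu_n}_{\Czero}\geq c>0$; pick points $q_n\in\Sc^{d_n}$ where $\abs{\nu_n}$ is within a factor $2$ of its supremum.

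The key dichotomy is: where do the $q_n$ accumulate, relative to the three regions $\pi^{-1}(\EBall{T_1})$ (Scherk region), the neck annulus $\pi^{-1}(\EBall{T_2}\setminus\EBall{T_1})$, and the four cigar caps outside $\pi^{-1}(\EBall{T_2})$? In each case I would re-center, use the identifications described at the end of \cref{sec: linear analysis on Scherk}, \cref{sec: linear analysis on cigar}, and \cref{sec: linear analysis on cylinder} (namely that $\Sc^{d_n}$ coincides $\G$-equivariantly, up to small $\C^\infty$-perturbations, with the corresponding model building block on balls of fixed size, and that $\L_{\Sc^{d_n}}$ smoothly converges to the model Jacobi operator), and extract via Arzelà--Ascoli and interior Schauder estimates a limit $\nu_\infty$ which is a bounded $\Ctwoa_{loc}$ solution of the model linearized equation: $\L_\Sc\nu_\infty=0$ on the Scherk surface, or $\L_C\nu_\infty=0$ on the totally geodesic cylinder, or $\L_L\nu_\infty=0$ on the cigar. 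Crucially the equivariance under $\G$ (resp. the stabilizer $\G_C$ of a cylinder) passes to the limit, and $\abs{\nu_\infty(q_\infty)}\geq c/2>0$ so $\nu_\infty\not\equiv 0$. But \cref{prop: linearized problem on Scherk}, \cref{prop: linearized problem on cylinder}, and \cref{prop: linearized problem on cigar} each assert precisely that such a bounded equivariant Jacobi field must vanish — contradiction. Hence $\norm{\nu_n}_{\Czero}\to 0$, and feeding this back into \cref{prop: uniform elliptic estimates} forces $\norm{\nu_n}_{\Ctwoa}\to 0$, contradicting $\norm{\nu_n}_{\Ctwoa}=1$.

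The main obstacle is ensuring the blow-up limit is genuinely a \emph{bounded} global solution on the correct model, which requires knowing that the point $q_n$ does not escape to the ``ends'' of a building block at a rate incommensurate with $T_1,T_2$ — one must rule out a concentration scale intermediate between the Scherk scale $O(1)$ and the cigar scale, which is exactly why the intermediate cylinder model (with $c_i=\lambda_d p_i$, $\lambda_d\to\infty$ and $\lambda_d-d\to\infty$) was introduced in \cref{sec: linear analysis on cylinder}: for any sequence of base-points escaping every compact set in the Scherk region but staying in $\pi^{-1}(\EBall{T_1})$, and likewise in the neck, the rescaled surfaces subconverge to the flat cylinder $C$. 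One must check that the three families of models together cover all possible locations of $q_n$ (using that $T_1=W_0(d^2)\to\infty$ and $T_2-T_1=1$), and that in the cigar case the non-exponential decay of the asymptotically cylindrical metric does not spoil the interior estimates — but this is handled by the bounded geometry of the cigar already invoked in \cref{prop: linearized problem on cigar} and \cref{prop: uniform elliptic estimates}. A mild additional point: one should confirm that the norms on $\Sc^{d_n}$ restricted to a fixed model ball converge to the model norms, which again follows from the smooth convergence of metrics in \cref{lemma: metric estimates near 0} and \cref{lemma: metric estimates near punctures}.
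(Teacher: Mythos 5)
Your argument is essentially identical to the paper's: contradiction, uniform Schauder to keep the $\Czero$ norm bounded away from zero along the sequence, a three-case blow-up according to whether the concentration points stay at bounded distance from the central fibre, from one of the punctures $p_i$, or escape both (giving the Scherk surface, cigar, and flat cylinder as the respective limit models), followed by extracting a bounded equivariant Jacobi field on the model via Arzel\`a--Ascoli and a diagonal argument, and invoking \cref{prop: linearized problem on Scherk,prop: linearized problem on cylinder,prop: linearized problem on cigar} to kill it. The only cosmetic difference is that your initial phrasing of the trichotomy in terms of the regions $\pi^{-1}(\EBall{T_1})$, $\pi^{-1}(\EBall{T_2}\setminus\EBall{T_1})$, and the cigar caps is slightly misleading (since $T_1\to\infty$, a point can lie in the ``Scherk region'' yet escape every compact subset of the Scherk surface), but you correct this yourself in the discussion of the ``main obstacle,'' where you articulate exactly the distance-based criterion the paper uses.
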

\begin{proof}
    Assume by contradiction this is not the case. Then there exists a sequence $d_k\to\infty$ and $\nu_k\in\Ctwoa_\G(N\Sc^{d_k})$ such that $\norm{\nu_k}_{\Ctwoa}=1$ and $\norm{\L\nu_k}_{\Czeroa}\to0$. Applying \cref{prop: uniform elliptic estimates} to such a sequence, we deduce:
    \begin{align*}
        \norm{\nu_k}_{\Czero}\geq C_0>0,
    \end{align*}
    where $C_0$ is some positive constant independent from $d$. It follows that, for every $k$, there exists $x_k\in \Sc^{d_k}$ such that $\abs{\nu_k(x_k)}\geq C_0>0$. Up to subsequences, at least one of the following three situations holds:
    \begin{enumerate}
        \item\label{case: Scherk} $\dist_{k}(x_k,p)$ is bounded in $k$;
        \item\label{case: cigar} $\dist_k(x_k,p_i)$ is bounded in $k$ for some $i=\pm1,\pm2$;
        \item\label{case: cilinder} $\dist_k(x_k,p)\to+\infty$ and $\dist_k(x_k,p_i)\to+\infty$ as $k\to\infty$ for every $i=\pm1,\pm2$, 
    \end{enumerate}
    where $p$ is the chosen base point in $\pi^{-1}(0)$ and $\dist_k$ is the distance function associated to the Riemannian metric $g_{d_k}$. We only discuss \cref{case: Scherk}, as \cref{case: cigar,case: cilinder} are analogous by considering \cref{sec: linear analysis on cigar} and \cref{sec: linear analysis on cylinder} instead of \cref{sec: linear analysis on Scherk}, respectively. 
    
    For every fixed $R>0$, we restrict $\nu_k$ to $\Sc^d\cap \pi^{-1}(\EBall{R})$ and we denote this restriction by $\nu^R_k$. Since $\norm{\nu_k}_{\Ctwoa}=1$ for every $k$, we must also have $\norm{\nu^R_k}_{\Ctwoa}\leq1$. By Arzel\`a--Ascoli theorem, $\nu^R_k$ converges in $\Ctwob$, up to subsequences, to some $\nu^R_\infty$ for every $0<\beta<\alpha<1$ (cfr. discussion after \cref{prop: linearized problem on Scherk}). Using a standard diagonal argument for $k$ and a sequence $R_i\to\infty$, we can find $\nu_\infty$ such that $\nu_k\to\nu_\infty$ in $\Ctwob_{\Sc,\loc}$. Since $\nu_k$ is a $\G$-equivariant element of $\ker\L$ for every $k$ and $\L \nu_k\to \L_{\Sc}\nu_\infty$ in $\Czerob_{\Sc,\loc}$, we have that $\nu_\infty$ is a $\G$--equivariant element of $\ker\L_{\Sc}=0$. Finally, up to passing to a further subsequence, $x_k$ as in \cref{case: Scherk} must converge to some $x_\infty\in \Sc$ and $|\nu_\infty(x_\infty)|>0$. 

    We now show that $\nu_\infty$ is a bounded section of $N\Sc$. First, note that $\norm{\nu_k}_{\Ctwoa}=1$ implies that $\norm{\nu_k}_{\Czero}\leq 1$; hence for every point $x\in\mathcal{S}$ we can find a $k$ big enough such that $\abs{\nu_k(y)}\leq1$ for every $y$ in a small ball centred at $x$. We conclude that $\abs{\nu_\infty (x)}\leq 1$ because $\nu_k\to \nu_\infty$ in $\Ctwob_{\Sc,\loc}$.
    
%     \color{blue}
% Non capisco la necessità di tutte queste storie: se $\|\nu_k\|\leq 1$ e $\nu_k$ converge a $\nu_\infty$ in $C^{2\beta}_{loc}$ la convergenza sarà uniforme su tutti i compatti e il bound $C^0$ passa al limite.
%     \color{black}

    Since we have constructed a $\G$--equivariant bounded section $\nu_\infty$ of $\Ctwob_{\Sc,\loc}$ which is in $\ker\L_\Sc$, we can use \cref{prop: linearized problem on Scherk} to deduce that it must be identically zero. This however is a contradiction to $|\nu_\infty(x_\infty)|>0$.
\end{proof}

\subsection{The non-linear problem} We are finally ready to deform the approximate minimal surface $\Sc^d\subset X_d$ into an actual minimal surface using the following quantitative version of the Implicit Function Theorem.
% (cfr. \cite{Foscolo2019}*{Lemma 6.13}).
\begin{lemma}\label{lemma: contraction map}
Let $E$ and $F$ be Banach spaces and let $\Phi: E\to F$ be a smooth function between them. Writing $\Phi(x)=\Phi(0)+\L(x)+N(x)$ with $\L$ linear map of $\Phi$ at $0$, if there exists constants $r,C,q$ such that:
\begin{enumerate}
    \item $\L$ is invertible with $\norm{\L^{-1}}\leq C$,
    \item $\norm{N(x)-N(y)}_{F}\leq q\norm{x+y}_E\norm{x-y}_E$ for all $x,y\in B_r(0)\subset E$;
    \item $\norm{\Phi(0)}_F<\min\{\frac{r}{2C},\frac{1}{4C^2q}\}$;
\end{enumerate}
then there exists a unique $x\in \overline{B}_{2C\norm{\Phi(0)}}(0)\subset E$ such that $\Phi(x)=0$.
\end{lemma}
\begin{proof}
Consider the map $G:\overline{B}_{2C\norm{\Phi(0)}}\subset E\to \overline{B}_{2C\norm{\Phi(0)}}\subset E$ defined as:
\[
Gx:=-\L^{-1}\left(\Phi(0)+N(x)\right).
\]
We have $Gx=x$ if and only if $\Phi(x)=0$ and the quantitative estimates in the assumptions guarantee that $G$ is a contraction.
\end{proof}

The final ingredient to apply \cref{lemma: contraction map} in our setting is a uniform estimate on the non-linear terms. 
\begin{proposition}[Uniform estimates for non-linearities]\label{prop: uniform nonlinearities estimates}
    Let $\alpha\in(0,1)$. For every $d$ large enough, there exists a constant $q>0$ and $r>0$ independent from $d$ such that
    \[
    \norm{Q\nu-Q\nu'}_{\Czeroa}\leq q \left(\norm{\nu-\nu'}_{\Ctwoa}\cdot \norm{\nu+\nu'}_{\Ctwoa}\right)
    \]
    for every $\nu,\nu'\in B_r(0)\subset \Ctwoa(N\Sc^d)$ .
\end{proposition}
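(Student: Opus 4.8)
The plan is to recognise $Q$ as a finite sum of terms, each vanishing to second order at $\nu=0$ and with coefficients bounded in $\Czeroa$ \emph{uniformly in $d$}, and then to extract the quadratic Lipschitz estimate by routine Hölder calculus; the only genuine difficulty is the uniformity of all constants as $d\to\infty$.

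First I would fix Fermi coordinates along $\Sc^d$ in $X_d$ and write the mean curvature of $\graph\nu$ in the resulting normal frame as $H_\nu = F\big(x,\nu(x),\nabla\nu(x),\nabla^2\nu(x)\big)$, where $F$ is a universal smooth function of the $2$-jet $j^2\nu=(\nu,\nabla\nu,\nabla^2\nu)$ whose $x$-dependence enters only through $g_d$, its first two derivatives, and the second fundamental form of $\Sc^d$, all evaluated in the Fermi frame. By definition $F(x,0,0,0)=H^d$ and the differential of $F$ in the jet variables at the origin is $\L$, so Taylor's formula with integral remainder displays $Q\nu=H_\nu-H^d-\L\nu$ as a finite sum of terms of the schematic form $a(x,j^2\nu)\ast j^2\nu\ast j^2\nu$, where each $a$, together with its derivatives in all of its arguments, is bounded on $\{|j^2\nu|<\delta_0\}$ for some $\delta_0>0$.

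The key point is that the bounds on the coefficients $a$ and the radius $\delta_0$ can be taken independent of $d$. By \cref{prop: uniform elliptic estimates} the surface $\Sc^d$ has bounded geometry as a submanifold of $X_d$: uniformly in $d$ it is a small $\C^\infty$-perturbation of the Scherk surface and the totally geodesic cylinder in $(\R^3\times S^1,dt^2+g_{\R^3})$, or of the cigar in the Taub--NUT space, each of which admits Fermi coordinates of a definite size; combined with \cref{lemma: metric estimates near 0} and \cref{lemma: metric estimates near punctures}, which control $g_d$ and its derivatives in these charts uniformly, this yields $d$-independent bounds for the $a$'s and their jet-derivatives on a fixed ball $\{|j^2\nu|<\delta_0\}$. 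One then chooses $r>0$ so small, independently of $d$, that $\norm{\nu}_{\Ctwoa}<r$ forces $|j^2\nu|<\delta_0$ pointwise.

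Finally, using the pointwise estimate $\big|Q\nu-Q\nu'\big|\lesssim\big(|j^2\nu|+|j^2\nu'|\big)\,|j^2(\nu-\nu')|$, obtained from $\mu_1\ast\mu_1-\mu_2\ast\mu_2=\mu_1\ast(\mu_1-\mu_2)+(\mu_1-\mu_2)\ast\mu_2$ for the leading factors together with the Lipschitz dependence of the coefficients $a$ on the remaining ones, and its analogue for the $\alpha$-Hölder seminorms via the product and composition rules for Hölder-continuous functions on a space of bounded geometry, one obtains the desired quadratic Lipschitz estimate on $B_r(0)$ with $q$ independent of $d$, absorbing the higher-order remainders into the quadratic term using the smallness of $r$. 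The main obstacle is exactly this uniformity in $d$, which is what the bounded geometry of $\Sc^d$ from \cref{prop: uniform elliptic estimates} and the metric estimates of \cref{sec:Gibbons:Hawking} are there to guarantee; granting those, the rest is the standard nonlinear bookkeeping common to gluing arguments.
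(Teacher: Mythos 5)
Your proof is correct and follows essentially the same strategy as the paper's, which simply cites White's theorem for smoothness of the mean curvature operator, uses the bounded-geometry cut-and-paste argument of \cref{prop: uniform elliptic estimates} for uniformity in $d$, and concludes by Taylor expansion at the origin; your write-up merely fills in the Fermi-coordinate bookkeeping and Hölder calculus that the paper leaves implicit. One very small remark: your identity $\mu_1\ast\mu_1-\mu_2\ast\mu_2=\mu_1\ast(\mu_1-\mu_2)+(\mu_1-\mu_2)\ast\mu_2$ naturally yields a bound with $\norm{\mu_1}+\norm{\mu_2}$ rather than $\norm{\mu_1+\mu_2}$; to literally match the statement one should use the symmetry of the leading bilinear term (the Hessian of the area functional) and polarize to $(\mu_1+\mu_2)\ast(\mu_1-\mu_2)$, although the weaker form suffices for the contraction-mapping application.
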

\begin{proof}
    The mean curvature operator $H$ is smooth \cite{White1991}*{Theorem 1.1} and, via a cut and paste argument similar to the one in \cref{prop: uniform elliptic estimates}, it has uniformly controlled norm in a uniform neighbourhood of the origin. The proposition now follows from the Taylor expansion theorem at the origin.
\end{proof}

\begin{theorem}\label{thm: Main theorem}
Let $(X_d,g_d)$ be the multi-taub-NUT space with $m=1$ and characterizing points as in \cref{eqn: characterizing points Xd}. There exists $d_0>0$ such that, for every $d>d_0$ and $p\in\pi^{-1}(0)\cong S^1$, there is a distinct minimal surface $\Sigma\subset X_d$ with the following properties:
\begin{enumerate}
\item\label{item: S diff S2} the surface $\Sigma$ is diffeomorphic to $S^2$;
\item\label{item: S equiv} the surface $\Sigma$ is $\G$--equivariant, where $\G$ is as defined in \cref{sec: symmetries Xd} choosing the characterizing point for $\G$ to be $p$;
\item\label{item: S Gauss map} the degree of the (positive) Gauss lift $a:\Sigma\to S^2$ is $1$;
\item\label{item: Null(S)} the nullity of $\Sigma$ satisfies $\Null(\Sigma)\geq1$;
\item\label{item: Ind(S)} the (Morse) index of $\Sigma$ satisfies $\Ind(\Sigma)\geq1$;
\item\label{item: S homology} if $H_2(X_d;\Z)$ (isomorphic to the $A_3$ lattice with its intersection form) is generated by the circle-invariant minimal spheres: $c_1:=[\pi^{-1}([p_{-1},p_{-2}])]$, $c_2:=[\pi^{-1}([p_{-2},p_{1}])]$ and $c_3:=[\pi^{-1}([p_{1},p_{2}])]$, then $[\Sigma]=c_1+c_3$. 
\end{enumerate}
\end{theorem}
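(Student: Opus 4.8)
The plan is to produce $\Sigma$ as a normal graph $\graph\nu_d$ over the approximate minimal surface $\Sc^d$ constructed in \cref{sec: aprroximate minimal surface}, by applying the quantitative implicit function theorem \cref{lemma: contraction map}, and then to deduce \cref{item: S diff S2}--\cref{item: S homology} from the construction.

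For the existence, fix $d$ large and set $E=\Ctwoa_\G(N\Sc^d)$, $F=\Czeroa_\G(N\Sc^d)$. On a small ball in $E$ let $\Phi_d(\nu)$ be the mean curvature of $\graph\nu\subset(X_d,g_d)$: this is smooth by \cite{White1991}*{Theorem~1.1} and lands in $F$ because $\Sc^d$ and $g_d$ are $\G$--invariant. Writing $\Phi_d(\nu)=H^d+\L\nu+Q\nu$ as in \cref{sec: perturbation to minimal surface}, one verifies the hypotheses of \cref{lemma: contraction map}: the operator $\L\colon E\to F$ is invertible with $\norm{\L^{-1}}\le C$ uniformly in $d$ — the estimate $\norm{\nu}_{\Ctwoa}\le C\norm{\L\nu}_{\Czeroa}$ is \cref{prop: uniform linear estimates}, and invertibility follows because $\L$ is a formally self-adjoint elliptic operator on the \emph{closed} surface $\Sc^d\cong S^2$ commuting with $\G$, hence Fredholm of index $0$ on $E$, so that triviality of the kernel forces surjectivity; the quadratic estimate for $Q$ on a $d$--independent ball is \cref{prop: uniform nonlinearities estimates}; and $\norm{\Phi_d(0)}_F=\norm{H^d}_{\Czeroa}=\bigO(\log d/d^2)$ by \cref{prop: mean curvature estimates initial surface}, which for $d$ large is below the threshold required by \cref{lemma: contraction map}. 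The theorem then yields a unique $\nu_d\in E$ with $\Phi_d(\nu_d)=0$ and $\norm{\nu_d}_{\Ctwoa}\le 2C\norm{H^d}_{\Czeroa}\to 0$; elliptic bootstrapping makes $\Sigma\eqdef\graph\nu_d$ a smooth minimal surface, $\Ctwoa$--close to $\Sc^d$.

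The properties are then extracted as follows. Being a graph over $\Sc^d\cong S^2$ gives \cref{item: S diff S2}, and $\Sigma$ is $\G$--invariant because $\nu_d$ is $\G$--equivariant, giving \cref{item: S equiv}. For \cref{item: S Gauss map}, the positive Gauss lift of an oriented surface in an oriented Riemannian $4$--manifold varies continuously with the surface in $C^1$, so the Gauss lift of $\Sigma$ is homotopic to that of $\Sc^d$, which has degree $1$ by \cref{remark: properties ScIn}. For \cref{item: Null(S)}, the Killing field generating the triholomorphic circle action restricts to a section of $N\Sigma$ in the kernel of the Jacobi operator (the flow of a Killing field preserves minimality); it cannot vanish identically, as that would force $\Sigma$ to be $S^1$--invariant and hence a straight-segment sphere by \cite{LotayOliveira2024}, whose Gauss lift has degree $0$, contradicting \cref{item: S Gauss map}; thus $\Null(\Sigma)\ge 1$. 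For \cref{item: Ind(S)}, the structure $\L_\Sc=\tfrac{\abs{\II_\Sc}^2}{2}(\Delta_{S^2}+2,\Delta_{S^2})$ recorded in \eqref{eqn: linearized Scherk} shows Scherk's surface has Morse index $1$, witnessed by a compactly supported section of $N\Sc$ with strictly negative second variation; transplanting it to $\Sigma$ via the identification of $\Sc^d\cap\pi^{-1}(\EBall{R})$ with a piece of $\Sc$ and using that $\Sigma$ converges there to $\Sc$ (\cref{lemma: metric estimates near 0}, \cref{sec: linear analysis on Scherk}), its second variation on $\Sigma$ stays negative for $d$ large, so $\Ind(\Sigma)\ge 1$.

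For \cref{item: S homology}, $\Sigma$ is by construction a desingularisation, supported in $\pi^{-1}(\EBall{R})\cong\EBall{R}\times S^1$ away from the four cigars over $[T_2\tfrac{p_i}{d},p_i]$, of the union of the two circle-invariant minimal spheres $\pi^{-1}([p_{-1},p_1])$ and $\pi^{-1}([p_{-2},p_2])$; since $\pi^{-1}(\EBall{R})$ is homotopy equivalent to a circle, $[\Sigma]$ is determined by its intersection numbers with $c_1,c_2,c_3$, and these — read off from the geometry of the four capping cigars (each holomorphic for $p_i/d$) together with $[\Sigma]^2=-4$, which is forced by Webster's formula \eqref{eq:Webster} since $\gamma=0$ and $\deg a=1$ — give $[\Sigma]=c_1+c_3$. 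Finally, the surfaces obtained for different $p\in\pi^{-1}(0)$ are distinct: the triholomorphic circle action is an isometry carrying the construction data for $p$ to that for $e^{i\theta}\cdot p$, so by uniqueness in \cref{lemma: contraction map} it carries $\Sigma(p)$ to $\Sigma(e^{i\theta}\cdot p)$, and since $\Sigma$ has degree-$1$ Gauss lift it is not $S^1$--invariant, so this describes a genuine circle of minimal surfaces. The main obstacle in all of this is the single input \cref{prop: uniform linear estimates}: it rests on the vanishing of the bounded equivariant kernel on each building block, the most delicate case being the cigar (\cref{prop: linearized problem on cigar}), whose induced metric is asymptotically cylindrical but only with polynomial decay, so that one cannot use a Fredholm/indicial-root analysis and must instead argue via a Weitzenböck identity and an iteration reducing bounded Jacobi fields to bounded holomorphic sections of the normal bundle, which are then ruled out by an explicit superexponentially growing trivialisation.
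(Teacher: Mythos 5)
Your existence argument and the deduction of items \ref{item: S diff S2}--\ref{item: Ind(S)} follow essentially the same route as the paper: the quantitative implicit function theorem \cref{lemma: contraction map} fed by \cref{prop: uniform linear estimates,prop: uniform nonlinearities estimates,prop: mean curvature estimates initial surface}, with nullity coming from the circle of base points (equivalently the normal component of the Killing field, as you phrase it), and index coming from transplanting a negative direction supported in the Scherk region. Your index argument uses a fixed compactly supported test section, whereas the paper uses the first Dirichlet eigenvector on a $d$--dependent exhaustion $\Omega_T$ with $T=W_0(d^2)$; both are correct and interchangeable.

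The only genuine gap is in \cref{item: S homology}. You assert that $[\Sigma]$ can be ``read off from the geometry of the four capping cigars together with $[\Sigma]^2=-4$,'' but you never compute any intersection number, and $[\Sigma]^2=-4$ alone does not determine the class: in the $A_3$ lattice generated by $c_1,c_2,c_3$ (with $c_i^2=-2$, $c_i\cdot c_{i+1}=1$, $c_1\cdot c_3=0$) the classes $c_1+c_3$, $c_1-c_3$ and $c_1+2c_2+c_3$ all have self-intersection $-4$. Also, the clause ``since $\pi^{-1}(\EBall{R})$ is homotopy equivalent to a circle, $[\Sigma]$ is determined by its intersection numbers with $c_1,c_2,c_3$'' is a non sequitur; what you need is nondegeneracy of the intersection form, not the homotopy type of a trivialising neighbourhood. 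The paper sidesteps all of this with a Mayer--Vietoris computation: covering $X_d$ by $U\cong\R^3\times S^1$ (a neighbourhood of $\pi^{-1}(0)$) and $V=\sqcup_i V_i$ with $V_i\cong\R^4$ a neighbourhood of $p_i$, one identifies $H_2(X_d)$ with $\ker\bigl(\Z^4\to\Z\bigr)$ and reads off $i_\ast[\Sigma]=(1,-1,1,-1)=i_\ast c_1+i_\ast c_3$ directly from the four cigar ends of $\Sc^d$. If you prefer to argue via intersection numbers you would have to actually compute $[\Sigma]\cdot c_j$, which takes more care than your sketch suggests; as written, the step is asserted rather than proved.
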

\begin{proof}
Let $\Sc^d\subset X_d$ be the surface constructed in \cref{sec: aprroximate minimal surface} for any fixed $p\in\pi^{-1}(0)$. Let $E:=\Ctwoa_{\G}(N\Sc^d)$ and let $F:=\Czeroa_\G (N\Sc^d)$ for some $\alpha\in(0,1)$. Finally, let $\Phi:E\to F$ be the mean curvature operator. For every $d$ large enough, we can apply \cref{lemma: contraction map}. Indeed, the first two conditions are true for constants uniform in $d$ by \cref{prop: uniform linear estimates} (which proves injectivity of the linearisation restricted to $\G$--equivariant sections, and hence also surjectivity since this restriction is a self-adjoint operator) and \cref{prop: uniform nonlinearities estimates}, while the mean curvature of $\Sc^d$ (i.e., $\Phi(0)$ in the notation of the lemma) can be made arbitrary small using \cref{prop: mean curvature estimates initial surface} as $d\to\infty$. In this way, we have constructed $\nu\in \Ctwoa_{\G}(N\Sc^d)$ such that $\norm{\nu}_{\Ctwoa}\leq C\log d/d^2$ and such that $\Sigma:=\graph(\nu)$ is minimal. By standard elliptic regularity, $\Sigma$ is a smooth minimal surface and, by construction (cfr. \cref{remark: properties ScIn}), $\Sigma$ satisfies \cref{item: S diff S2,item: S equiv,item: S Gauss map}. \cref{item: Null(S)} is also clear as our construction produces an $S^1$--family of minimal surfaces corresponding to the choice of base point $p\in \pi^{-1}(0)$ or equivalently the isometric $S^1$-action on $X_d$.

% \color{blue}
% Bisogna uniformare la convenzione sul segno del Jacobi operator $\mathcal{L}$: \`e negativo qui, quando lo scriviamo in termini di $\triangle + 2$ su $S^2$, dove $\triangle$ \`e l'analysts' Laplacian, ma positivo nella prop sul linearizzato del cigar. Come geometra, io direi che $\mathcal{L}$ e Laplaciano devono essere positivi, quindi $\mathcal{L}$ si compara con $\triangle -2$ su Scherk, la prop sul cigar rimane così com'è e qui sotto non c'è bisogno del segno meno.
% \color{black}

We now construct vector fields $V_d\in \Gamma(N\Sc^d)$ such that $-\int_{\Sc^d}\langle \L_{\Sc^d} V_d,V_d\rangle<-c<0$ for some constant $c$ independent from $d$. This is enough to prove \cref{item: Ind(S)} as $\Sigma$ is arbitrarily close in $C^{2,\alpha}$ to $\Sc^d$ for $d\to\infty$. We construct $V_d$ as a piecewise smooth normal vector field supported in the region of $\Sc^d$ that coincides with a portion of the Scherk surface. Let $T=W_0(d^2)$ be as in \cref{sec: aprroximate minimal surface}, $\Omega_T:=\Sc\cap (\EBall{T}\times S^1)\subset \R^3\times S^1$ and let $\tilde{V}_d$ be the first eigenvector field with Dirichlet boundary condition on $\Omega_T$, which we trivially extend on the whole $\Sc$. Such a vector field always exists by standard PDE techniques; if $d$ is sufficiently large it also satisfies $-\int_{\Sc}\langle \L_{\Sc} \tilde{V}_d,\tilde{V}_d\rangle<0$ because the Scherk surface has Morse index 1, cfr. \cite{MontielRos1991}.
% and the definition of Morse index for non-compact minimal surfaces (cfr. \cite{MontielRos1991}).
Note that the variational characterization of eigenvalues implies that $-\int_{\Sc}\langle \L_{\Sc} \tilde{V}_d,\tilde{V}_d\rangle$ is a decreasing quantity in $d$ and, via \cref{lemma: exp gauge}, we can construct $V_d\in\Gamma(N\Sc^d)$. The strict inequality of the second variation along $V_d$ continues to hold because of \cref{lemma: metric estimates near 0}.

% \color{blue}
% Non \`e molto più trasparente e diretto dire che si prende la funzione constante 1 su $S^2$, pensata come elemento bounded di eigenvalue negativo del Jacobi di Scherk comparando Jacobi con $\triangle_{S^2}-2$. Poi si fa un cut-off, e se $T$ (cio\`e $d$) \`e sufficientemente grande otteniamo un vettore su Scherk di second variation negativa e supportato nella regione in cui $\Sc^d=\Sc$. A quel punto gioco fatto prendendo $d$ ancora più piccolo.
% \color{black}

Finally, we prove \cref{item: S homology} using a Mayer--Vietoris argument. Consider the covering of $X_d$ given by $U:=\pi^{-1}(\{(x_1,x_2,x_3)\in\R^3: x_1 x_2<2\}\cap \{(x_1,x_2,x_3)\in\R^3: x_1 x_2>-2\})$ and $V:=\pi^{-1}(\{(x_1,x_2,x_3)\in\R^3: x_1 x_2>1\}\cup \{(x_1,x_2,x_3)\in\R^3: x_1 x_2<-1\}$. Observe that $U\cong \R^3\times S^1$ does not contain any $p_i$ for $d$ big, while $V$ consists of four connected components $V_i\cong\R^4$ each containing exactly $p_i$ for $i=\pm1,\pm2$. The intersections $U\cap V_i\cong \R^3\times S^1$ are disjoint. The Mayer--Vietoris sequence reads
\[
0\cong H_2(U)\oplus H_2(V)\to H_2(X)\xrightarrow[]{i} H_1(U\cap V)\cong\Z^4\xrightarrow[]{p} H_1(U)\oplus H_1(V)\cong\Z\oplus\{0\}\to H_1(X)\cong \{0\},
\]
from which we deduce that $H_2(X)\cong\im(i)=\ker(p)$. Under the obvious identification $H_1(U\cap V)\cong \Z^4$ and $H_1(U)\cong\Z$ consistent with the orientations, the map $p:\Z^4\to\Z$ takes the form: $(n_1,n_2,n_3,n_4)\mapsto n_1+n_2+n_3+n_4$. From \cref{sec: construction initial surface}, we see that $i_\ast[\Sigma]=i_\ast [\Sc^d]=(1,-1,1,-1)=(1,-1,0,0)+(0,0,1,-1)$. We can conclude because $i_\ast c_1=(1,-1,0,0)$, $i_\ast c_2=(0,1,-1,0)$ and $i_\ast c_3=(0,0,1,-1)$ under this identification. 
\end{proof}

% \begin{remark}
% 	A consequence of \cref{item: S homology} in \cref{thm: Main theorem} is that the self-intersection number of $[\Sigma]$ is $-4$, i.e., $[\Sigma]\cdot [\Sigma]=-4$. Since the Euler class of the normal bundle agrees with the self-intersection number, the previous formula agrees with Webster's formula \cite{Webster1984}, and implies that $\Sigma$ (together with its normal bundle) is topologically indistinguishible from the Atiyah--Hitchin sphere.

% \end{remark}
\begin{remark}
The construction works more generally under small $\G$-equivariant perturbations of the Riemannian metric $g_d$. For instance, our argument works for multi-Taub--NUT gravitational instantons with characterizing points $\{p_{\pm1},p_{\pm2},q_1,...,q_n\}$ for $\{q_i\}_{i=1}^n$ distributed in a $\G$-invariant way and with $\abs{q_i}$ big enough for every $i=1,...,n$.
\end{remark}

\section{Generalizations and applications}\label{sec:Applications}
In this section, we explain how to generalize our construction in several directions.
% and recall how minimal surfaces in hyperk\"ahler 4-manifolds with degree-1 positive Gauss lift
% we show how these minimal surfaces
% have applications to the theory of Fueter maps and triholomorphic maps.
Firstly, we can use the first-named author's construction of hyperk\"ahler metrics on the K3 manifold \cite{Foscolo2019} to exhibit the existence of unstable minimal spheres with degree-1 positive Gauss lift for certain hyperk\"ahler metrics on the K3 manifold. Secondly, by taking several periods of Scherk's surface, we observe that there are unstable minimal surfaces of any topological type in $X_d$ for $d$ large enough. Similarly, we can use Karcher's saddle towers to construct unstable minimal surfaces of any topological type in ALF multi-Taub--NUT spaces with characterizing points lying on the vertices of a $2n$-sided regular polygon with sufficiently large side.
% In both such cases, the degree of the Gauss lift is strictly greater than one.
Finally, we show that for any minimal sphere with degree-1 Gauss lift we produce, there exists a stationary harmonic Fueter map with a $0$-dimensional singular set and a stationary harmonic tri-holomorphic map with $1$-dimensional singular set.

\subsection{Minimal spheres in K3 surfaces} We first briefly recall the first-named author's construction of hyperk\"ahler metrics on the K3 manifold (i.e., the smooth 4-manifold underlying any complex K3 surface) in a highly symmetric setting. Afterwards, we explain how to construct unstable minimal surfaces of degree-1 positive Gauss-lift with respect to these hyperk\"ahler metrics on the K3 manifold near the collapsed limit. 

\subsubsection{The highly-symmetric flat torus}\label{sec: highly symmetric torus} Let $(\T^2\times S^1,g_{\T^2}+g_{S^1})$ be a 3-torus which can be decomposed into a squared 2-torus $(\T^2,g_{\T^2})$ of side $\alpha$ and a circle $(S^1,g_{S^1})$ of length $\beta$. Let $ \tilde \tau:\T^2\times S^1\to\T^2\times S^1$ be the standard involution on $\T^2\times S^1$ with fixed points $q_1,...,q_8$ which, in the lattice $\R^2/\alpha\Z^2\times \R/\beta\Z$, are 
	\begin{equation}
		\begin{aligned}\label{eqn: q points tori}
			&q_1:=[(0,0,0)], \quad\quad q_2:=[(\alpha/2,0,0)], \quad\quad q_3:=[(0,\alpha/2,0)], \quad\quad\hspace{3pt} q_4:=[(\alpha/2,\alpha/2,0)],\\
			&q_5:=[(0,0,\beta/2)], \quad q_6:=[(\alpha/2,0,\beta/2)], \quad q_7:=[(0,\alpha/2,\beta/2)], \quad q_8:=[(\alpha/2,\alpha/2,\beta/2)].
		\end{aligned}
	\end{equation}
We now pick the following 8 points of $\T^2\times S^1$: 
\begin{equation}
		\begin{aligned}\label{eqn: p points tori}
		&{p}_1:=[(\alpha/4,\alpha/4,\beta/4)], \quad \quad {p}_2:=[(3\alpha/4,\alpha/4,\beta/4)], \quad {p}_3:=[(\alpha/4,3\alpha/4,\beta/4)],\\
		 &{p}_4:=[(3\alpha/4,3\alpha/4,\beta/4)], \quad {p}_5:=[(\alpha/4,\alpha/4,3\beta/4)], \quad {p}_6:=[(3\alpha/4,\alpha/4,3\beta/4)],\\
		 & {p}_7:=[(\alpha/4,3\alpha/4,3\beta/4)], \quad {p}_8:=[(3\alpha/4,3\alpha/4,3\beta/4)],
		\end{aligned}
\end{equation}
which satisfy $\tilde \tau (p_i)=p_{9-i}$ for every $i=1,...,8$. 

For this choice of points, the torus $\T^2\times S^1$ admits a large discrete subgroup $\tilde\G$ of isometries containing $\tilde \tau$ and preserving the sets $\{q_1,...,q_8\}$ and $\{p_1,...,p_8\}$. The group $\tilde\G$ is generated by the involutions
% \begin{equation*}
%     \begin{aligned}
%          \Tilde{R}_1[(x_1,x_2,x_3)]&=[(-x_1,x_2,x_3)], \quad\hspace{22pt} \Tilde{R}_2[(x_1,x_2,x_3)]=[(x_1,-x_2,x_3)], \\
%           \Tilde{R}_3[(x_1,x_2,x_3)]&=[(x_1,x_2,-x_3)], \quad\hspace{22pt} \Tilde{R}_4[(x_1,x_2,x_3)]=[(x_2,x_1,x_3)],\\
%          \Tilde{R}_5[(x_1,x_2,x_3)]&=[(-x_2,-x_1,x_3)],\quad\hspace{15pt} \Tilde{R}_6[(x_1,x_2,x_3)]=[(x_1+\alpha/2,x_2,x_3)],\\
%          \Tilde{R}_7[(x_1,x_2,x_3)]&=[(x_1,x_2+\alpha/2,x_3)], \quad \Tilde{R}_8[(x_1,x_2,x_3)]=[(x_1,x_2,x_3+\beta/2)].
%     \end{aligned}
% \end{equation*}
\begin{align*}
         \Tilde{R}_1[(x_1,x_2,x_3)]&=[(\tfrac{\alpha}{2}-x_1,x_2,x_3)], & \Tilde{R}_2[(x_1,x_2,x_3)]&=[(x_1,\tfrac{\alpha}{2}-x_2,x_3)], \\
          \Tilde{R}_3[(x_1,x_2,x_3)]&=[(x_1,x_2,\tfrac{\beta}{2}-x_3)], & \Tilde{T}[(x_1,x_2,x_3)]&=[(x_1,x_2,\tfrac{\beta}{2}+x_3)],\\
         \Tilde{R}_4[(x_1,x_2,x_3)]&=[(x_2,x_1,x_3)], & \Tilde{R}_5[(x_1,x_2,x_3)]&=[(\tfrac{\alpha}{2}-x_2,\tfrac{\alpha}{2}-x_1,x_3)],\\
         \Tilde{R}'_4[(x_1,x_2,x_3)]&=[(\tfrac{\alpha}{2}+x_2,\tfrac{\alpha}{2}+x_1,x_3)], & \Tilde{R}'_5[(x_1,x_2,x_3)]&=[(-x_2,-x_1,x_3)].
\end{align*}
Note that $\tilde{\tau}=\Tilde{T}\circ \Tilde{R}_3\circ \Tilde{R}_4\circ\Tilde{R}'_5$. The orbit of the point $p_1$ consists of the points $\{ p_1, p_4, p_5, p_8\}$ and $p_1$ has stabiliser the subgroup $\tilde\G_{p_1}$ generated by $\Tilde{R}_1, \Tilde{R}_3$ and $\Tilde{R}_4$, isomorphic to $\G\simeq D_4\times \Z_2$. Similarly, the stabiliser of the point $p_2$ is a second copy of $\G$, the subgroup $\tilde\G_{p_2}$ generated by $\Tilde{R}_1, \Tilde{R}_3$ and $\Tilde{R}'_4$, and $\tilde\G\cdot p_2 = \{ p_2, p_3, p_6, p_7\}$. Finally, all the points $q_1, \dots, q_8$ belong to the same $\tilde\G$--orbit and the stabiliser of $q_1$ is the subgroup $\tilde\G_{q_1}$, isomorphic to $\Z_2\times \Z_2\times \Z_2$, generated by $\Tilde{T}\circ\Tilde{R}_3, \Tilde{R}_4$ and $\Tilde{R}'_5$. All these facts can be checked by explicit calculations. 

% Note that the stabiliser of $p_1$ in $\tilde\G$ is isomorphic to the group $\G\simeq \D_4\times \Z_2$ of symmetries of $X_d$. Indeed, the stabiliser of $p_1$ is generated by
% \begin{equation*}
%     \begin{aligned}
%     \Tilde{R}^1_1[(x_1,x_2,x_3)]&=[(-x_1+\alpha/2,x_2,x_3)], \quad\hspace{40pt} \Tilde{R}^1_2[(x_1,x_2,x_3)]=[(x_1,-x_2+\alpha/2,x_3)],\\
%     \Tilde{R}^1_3[(x_1,x_2,x_3)]&=[(x_1,x_2,-x_3+\beta/2)], \quad\hspace{40pt} \Tilde{R}^1_4[(x_1,x_2,x_3)]=[(x_2,x_1,x_3)],\\
%     \Tilde{R}^1_5[(x_1,x_2,x_3)]&=[(-x_2+\alpha/2,-x_1+\alpha/2,x_3)].
% 	\end{aligned}
% \end{equation*}
% Here $\Tilde{R}^1_1=\Tilde{R}_1  \Tilde{R}_6$, $\Tilde{R}^1_2=\Tilde{R}_2 \Tilde{R}_7$, $\Tilde{R}^1_3=\Tilde{R}_3 \Tilde{R}_8$, $\Tilde{R}^1_4=\Tilde{R}_4$, $\Tilde{R}^1_5=\Tilde{R}_5 \Tilde{R}_6 \tilde R_7$ and one can check these elements satisfy the relations of the corresponding elements in $\G$. Since $\tilde\G$ acts transitively on the points $p_1,\dots, p_8$, the same is true for the stabiliser of any of the other points $p_1,\dots, p_8$.

\subsubsection{The construction of the invariant hyperk\"ahler metrics} Now, by taking each of the punctures $p_i$'s with weight 4 we satisfy the balancing condition \cite{Foscolo2019}*{Equation 4.1}. Then on the punctured torus $\T^\ast= (\T^2\times S^1)\setminus\{ p_1, \dots, p_8,q_1,\dots, q_8\}$ there exists an abelian monopole $(\phi,\theta)$, i.e. a solution of \cref{eqn: monopole equation}, with prescribed singular behaviour at the punctures and which determines a principal $\U(1)$--bundle $P\to \T^\ast$ with connection $\theta$. Moreover, the involution $\tilde \tau$ lifts to an involution $\tau$ on $P$ (uniquely up to gauge transformations), which acts as the standard involution on the $\U(1)$ fibres (cfr. \cite{Foscolo2019}*{Proposition 4.3}). For every $\epsilon$ small enough, the Gibbons--Hawking ansatz with respect to $\phi_\epsilon=1+\epsilon \phi$ gives an incomplete $\tau$-invariant hyperk\"ahler metric $g^{gh}_\epsilon$ on $\restr{P}{U_\epsilon}$, where $U_{\epsilon}$ is the complement of (arbitrarily) small balls centred at the punctures (cfr. \cite{Foscolo2019}*{Lemma 4.9}). The gluing parameter $\epsilon$ determines the size of the $\U(1)$ fibre (cfr. \cite{Foscolo2019}*{Eq. 4.6}). After taking the $\tau$-quotient, the singular behaviour of $(\phi,\theta)$ implies that we can ``fill'' each removed neighbourhood of $q_i$ with a compact portion of an appropriately rescaled copy of the $D_0$ ALF gravitational instanton, the (not simply connected) Atiyah--Hitchin manifold, and the removed neighbourhoods of $p_i$ with a rescaled copy of a compact portion of $X_d$ (a $D_4\times \Z_2$--invariant $A_3$ ALF gravitational instanton) for some fixed $d$ larger that the costant $d_0$ given by \cref{thm: Main theorem}. In the transition region, we suitably interpolate between the aforementioned hyperk\"ahler metrics (cfr. \cite{Foscolo2019}*{Equation 5.7}).

In this way, we have described an approximate hyperk\"ahler manifold $(M_\epsilon,\tilde g_\epsilon)$ which, for every $\epsilon$ small enough, can be perturbed \cite{Foscolo2019}*{Theorem 6.15} (in a unique way using the slice to hyperk\"ahler deformations and diffeomorphisms of this theorem) to genuine hyperk\"ahler manifold $(M_\epsilon,  g_\epsilon)$. As $\epsilon \to 0$, $(M_\epsilon,  g_\epsilon)$ collapses to the flat orbifold $\T^\ast/\Z_2$ with bounded curvature away from the punctures, while after rescaling to fixed $\epsilon$ the manifold smoothly converges to the $D_0$ ALF space on compact subsets containing one of the $q_j$'s and to $X_d$ on compact subsets containing one of the $p_i$'s.

We will now show that we can lift the action of $\tilde\G$ to $M_\epsilon$ as a group of isometries of $\tilde{g}_\epsilon$ so that the action of the stabiliser of $p_i$ on the piece of $X_d$ used in the gluing construction coincides with the given action of $\G$. Then the construction of \cite{Foscolo2019}*{Theorem 6.15} can be carried in a $\tilde\G$--equivariant way so the resulting hyperk\"ahler manifold $(M_\epsilon,g_\epsilon)$ also satisfies ${\tilde\G}<\Iso(M_\epsilon,{g}_\epsilon)$. 

First of all, observe that we can cover the torus $\T^2\times S^1$ $\tilde\G$--equivariantly with open sets each isometric to either a neighbourhood of the origin in $\R^3$ with the action of $\tilde\G_{p_1}\simeq\G\simeq D_4\times \Z_2$ given in \cref{sec: symmetries Xd}, or a neighbourhood of the origin in $\R^3$ with the action of $\tilde\G_{q_1}\simeq\Z_2\times\Z_2\times\Z_2$ generated by \[
a(x_1,x_2,x_3)=(x_2,x_1,x_3), \qquad b(x_1,x_2,x_3)=(-x_2,-x_1,x_3), \qquad c(x_1,x_2,x_3) =(x_1,x_2,-x_3).
\]
(Here $a,b,c$ are simply the lifts to $\R^3$, thought of as the tangent space to $q_1$ of $\T^2\times S^1$, of $\Tilde{R}_4, \Tilde{R}'_5$ and, respectively, $\Tilde{T}\circ\Tilde{R}_3$.) Subsets of the first type are the interior of rectangular parallelepipeds centred at one of the $p_i$'s which are translates of the subset
\[
\{ (x_1,x_2,x_3)\, |\, |x_1+x_2| < \tfrac{\alpha}{4}, |x_1-x_2|<\tfrac{\alpha}{4}, |x_3|<\tfrac{\beta}{4}\}
\]
of $\R^3$ centered at the origin. Subsets of the second type are rectangular parallelepipeds centred at one of the $q_j$'s  which are translates of the subset
\[
\{ (x_1,x_2,x_3)\, |\, |x_1| < \tfrac{\alpha}{4}, |x_2|<\tfrac{\alpha}{4}, |x_3|< \tfrac{\beta}{4}\}
\]
of $\R^3$ centered at the origin. We then have a corresponding decomposition of $M_\epsilon$ by open sets that are diffeomorphic to either an open set of $X_d$ or an open set of the Atiyah--Hitchin manifold, with natural isometric actions of $\G$ and, respectively, $\Tilde\G_{q_1}/\langle abc =\tilde{\tau}\rangle \simeq \Z_2\times\Z_2$ on the base of the circle fibrations of their asymptotic models at infinity. In order to show that $\tilde\G$ acts by isometries on $(M_\epsilon,\tilde g_\epsilon )$ it is therefore enough to argue that these asymptotic actions lift to the asymptotic models and extend to $X_d$ and the Atiyah--Hitchin manifold.

The first case has already been treated in \cref{sec: symmetries Xd}. Consider then the Atiyah--Hitchin manifold. Recall that it admits an isometric cohomogeneity one action of $SO(3)$ with principal orbits $SO(3)/\Z_2\times\Z_2\simeq SU(2)/Q$, where $\Z_2\times \Z_2$ denotes the subgroup of diagonal matrices in $SO(3)$ and $Q$ is the quaternion group. Moreover, the asymptotic circle fibration is induced by $SU(2)/Q\rightarrow SU(2)/\textup{N}$, where $\textup{N}$ is the normaliser of diagonal matrices. (Note that the circle fibres of this fibration must have the opposite orientation with respect to the natural one because of the negative charge $-4$ of the asymptotic Gibbons--Hawking form of the Atiyah--Hitchin metric.) In terms of these group actions the asymptotic region of the Atiyah--Hitchin manifold can therefore be realised as an exterior domain in $\mathbb{H}/Q$ with projection to an exterior domain of $\R^3/\langle \tilde\tau\rangle$ induced by the Hopf projection $q\mapsto qi\overline{q}$. The action of $SO(3)$ on the Atiyah--Hitchin manifold is a lift of the action of $SO(3)$ on $\R^3/\langle \tilde\tau\rangle$ by rotations.
% Note also that, since $\tilde\tau$ is orientation reversing, the action of $SO(3)$ lifts to an action of $O(3)$ on $\R^3$.
Since $\Tilde\G_{q_1}$ is a subgroup of $O(3)$ and $\tilde\tau$ is orientation reversing, we deduce that the induced action of $\Tilde\G_{q_1}/\langle abc =\tilde{\tau}\rangle\subset SO(3)$ does indeed lifts and extends to an action on the Atiyah--Hitchin manifold.

\subsubsection{The construction of unstable minimal spheres} In the symmetric setting that we have described, we can now use the surface we constructed in \cref{thm: Main theorem} to obtain unstable minimal spheres in K3 surfaces with degree one (positive) Gauss lift. 

\begin{theorem}\label{thm: spheres in K3}
	Let $(M_\epsilon,g_\epsilon)$ be the family of highly symmetric hyperk\"ahler metrics on the K3 surface described above. Then there exists $\epsilon_0>0$ such that, for every $\epsilon<\epsilon_0$, there are unstable minimal spheres of $(M_\epsilon,g_\epsilon)$ with degree one (positive) Gauss lift.
\end{theorem}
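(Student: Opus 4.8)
The plan is to re-run the perturbation scheme of \cref{sec: perturbation to minimal surface}, this time inside $(M_\epsilon,g_\epsilon)$, seeding it with the minimal sphere $\Sigma\subset X_d$ produced by \cref{thm: Main theorem} for the fixed value of $d$ used in the construction of $(M_\epsilon,g_\epsilon)$. As recalled above, after rescaling to unit fibre length $(M_\epsilon,g_\epsilon)$ converges in $C^\infty_{loc}$, and $\tilde\G$--equivariantly, to $(X_d,g_d)$ on compact subsets containing one of the $p_i$'s, with the action of the stabiliser $\tilde\G_{p_1}$ restricting there to exactly the $\G\simeq D_4\times\Z_2$ action of \cref{sec: symmetries Xd}. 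Since $\Sigma$ (a small graph over $\Sc^d$) is contained in the fixed compact region $\pi^{-1}(\overline{\EBall{d}})\subset X_d$, for $\epsilon$ small enough this identification lets us transplant $\Sigma$ to a closed $\G$--invariant embedded sphere $\Sigma^0_\epsilon\subset M_\epsilon$ sitting near $p_1$. Working throughout in the rescaled picture, where the ambient metric is $C^\infty$--close to $g_d$ and $\Sigma$ is $g_d$--minimal, the mean curvature of $\Sigma^0_\epsilon$ is $o(1)$ as $\epsilon\to 0$.

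Next I would re-establish the analytic estimates. Because $\Sigma^0_\epsilon$ is localised in the $X_d$--model region and we restrict throughout to $\G$--equivariant sections of the normal bundle, the Jacobi operator of $\Sigma^0_\epsilon$ converges, as $\epsilon\to 0$, to the Jacobi operator of $\Sigma\subset X_d$; the latter, restricted to $\G$--equivariant sections, is invertible, as follows from the proof of \cref{thm: Main theorem} (the linearisation at $\Sc^d$ is injective on $\G$--equivariant sections by \cref{prop: uniform linear estimates}, hence invertible by self-adjointness, and $\Sigma$ is a sufficiently small graph over $\Sc^d$). Hence for $\epsilon$ small the Jacobi operator of $\Sigma^0_\epsilon$ has inverse bounded uniformly in $\epsilon$ on $\G$--equivariant $\Ctwoa$ sections, and the uniform estimate for the nonlinear remainder is inherited verbatim from the smoothness of the mean curvature operator as in \cref{prop: uniform nonlinearities estimates}. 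An application of \cref{lemma: contraction map} then yields, for every $\epsilon$ small enough, a $\G$--invariant minimal sphere $\Sigma_\epsilon\subset M_\epsilon$ which in the rescaled picture is $C^{2,\alpha}$--close to $\Sigma^0_\epsilon$, hence to $\Sigma$; as minimality is scale invariant and $\Sigma_\epsilon$ is minimal, elliptic regularity makes it a smooth minimal surface for $g_\epsilon$.

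It then remains to read off the stated properties. The normal bundle of $\Sigma_\epsilon$ is identified, through the transplantation and the subsequent small isotopy, with the normal bundle of $\Sigma\subset X_d$, whose Euler number is $[\Sigma]\cdot[\Sigma]=(c_1+c_3)^2=-4$ by the last assertion of \cref{thm: Main theorem}; since $\Sigma_\epsilon$ is a $2$--sphere, Webster's formula \eqref{eq:Webster} forces $\deg a = 1$ for its positive Gauss lift (equivalently, the Gauss lift of the localised $\Sigma_\epsilon$ is $C^1$--close to that of $\Sigma$, which has degree $1$). For instability I would transplant the test vector field $V_d$ built in the proof of \cref{thm: Main theorem} --- supported in the Scherk portion of the surface and with strictly negative second variation with respect to $g_d$ --- and observe that, since the rescaled metrics converge to $g_d$ and $\Sigma_\epsilon$ is $C^{2,\alpha}$--close to $\Sigma$, the second variation of the corresponding field on $\Sigma_\epsilon$ remains negative for $\epsilon$ small, so $\Ind(\Sigma_\epsilon)\geq 1$ and $\Sigma_\epsilon$ is unstable.

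The crux, as always in such arguments, is the linear step: controlling the $\G$--equivariant kernel and cokernel of the Jacobi operator uniformly as $\epsilon\to 0$. This is precisely where the \emph{highly symmetric} choice of configuration in \eqref{eqn: q points tori}--\eqref{eqn: p points tori} is needed, through the $\tilde\G$--equivariant refinement of \cite{Foscolo2019}: it guarantees that near each $p_i$ the collapsing metrics converge to $(X_d,g_d)$ together with the very $\G$--action for which \cref{thm: Main theorem} was proved, so that the absence of $\G$--equivariant Jacobi fields on $\Sigma\subset X_d$ persists under gluing and no substitute--cokernel obstruction appears. A minor but essential bookkeeping point is to perform the contraction argument in the rescaled picture, where all constants are uniform in $\epsilon$, and undo the rescaling only at the end.
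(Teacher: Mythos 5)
Your proposal is correct and takes essentially the same route as the paper: transplant the minimal sphere of \cref{thm: Main theorem} into the rescaled gluing region near a $p_i$ and use the absence of $\G$--equivariant Jacobi fields (ultimately \cref{prop: uniform linear estimates}) to perturb to an exact minimal sphere for $g_\epsilon$, then read off instability and the degree of the Gauss lift by continuity. The only cosmetic difference is that the paper first observes that the transplanted surface is \emph{exactly} minimal for the pre-glued metric $\tilde g_\epsilon$ (which equals a rescaled copy of $g_d$ in that region) and then applies the equivariant implicit function theorem of White to the metric perturbation $\tilde g_\epsilon\to g_\epsilon$, whereas you re-run the quantitative contraction argument of \cref{lemma: contraction map} directly against $g_\epsilon$; the underlying linear input is identical.
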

\begin{proof}
Consider the four gluing regions diffeomorphic to a subset of $X^d$, and $\tilde \Sigma\subset M_\epsilon$ be the obvious surface obtained from the minimal sphere constructed in \cref{thm: Main theorem}. This is a minimal surface of $(M_\epsilon,\tilde g_\epsilon)$ diffeomorphic to four copies of $S^2$. Since the $\Tilde\G$--equivariant kernel of the linearized operator of $\tilde \Sigma\subset (M_\epsilon,\tilde g_\epsilon)$ is trivial (\cref{prop: uniform linear estimates}), we can use the equivariant version of White's implicit function theorem for minimal immersions \cite{White2017}*{Theorem 2.3}, to perturb $\tilde \Sigma$ to a minimal surface $\Sigma\subset (M_\epsilon,g_\epsilon)$. Each of the four connected components of $\Sigma$ have the desired properties. Indeed, they are obviously spheres with degree one Gauss lift and, up to choosing $\epsilon$ smaller if necessary, they are unstable because of the continuity of the spectrum of the linearized operator. 
\end{proof}

\begin{remark}
	Alternatively, we can more directly mimic the construction of $\Sigma\subset X_d$, and obtain unstable minimal surfaces with degree one (positive) Gauss lift in the highly symmetric tori described above with punctures $q_1,...,q_8$ as in \cref{eqn: q points tori} and $p_1^1,...,p_1^4,...,p_8^1,...,p_8^4$ obtained from the points prescribed in \cref{eqn: p points tori} as follows:
\[
	p_i^j= p_i+r_j, \hspace{15pt} i=1,...,8;\hspace{5pt} j=1,...,4,
\]
where $r_1=(d,d,0)$, $r_2=(-d,d,0)$, $r_3=(d,-d,0)$ and $r_4=(-d,-d,0)$ for some $0<d<\alpha/4$ small enough. At each $q_i$, we glue in a $D_0$ ALF gravitational instanton, while at each $p_i^j$, we glue in a Taub--NUT space. This generalisation yields disjoint unions of unstable minimal 2-spheres in families of $\tilde\G$--invariant hyperk\"ahler metrics on the K3 manifold depending on the four parameters $\epsilon, \alpha, \beta, d$.
\end{remark}

\subsection{Minimal surfaces from multiple Scherk periods}
A straightforward generalization of \cref{thm: Main theorem} can be obtained taking multiple periods of the Scherk surface $\Sc$ instead of one. This means that, for every $g\geq0$, we consider Scherk as a minimal surface of genus $g$ inside of $\R^2\times \R/n\Z\cong \R^2\times S^1$ with $n=g+1$. 

Let $\Sc_g^d\subset X_d$ be the approximate minimal surface of genus $g=n-1$ constructed as in \cref{sec: construction initial surface} from $n$ periods of the Scherk surface $\Sc_g\subset\R^3\times \R/n\Z\cong \R^3\times S^1$. Apart from the degree of the Gauss map and, consequently, of the (positive) Gauss lift, which can be computed using Meeks--Rosenberg formula \cite{MeeksRosenberg1993}*{Theorem 4}, all the other properties of $\Sc_g\subset \R^3\times S^1$ and of the surface $\Sc_g^d\subset X_d$ remain true (cfr. \cref{prop: properties of Scherk}). The perturbation argument for the initial surface $\Sc_g^d\subset X_d$ can be recovered exactly as in \cref{sec: perturbation to minimal surface}. Note that \cite{MontielRos1991}*{Corollary 15} guarantees that the equivariant bounded kernel of the linearized operator of $\Sc_g\subset \R^3\times S^1$ is trivial exactly as in \cref{prop: linearized problem on Scherk}, and that its Morse index is $2g+1$. 

In particular, we can prove the following theorem exactly in the same way as \cref{thm: Main theorem}.
\begin{theorem}\label{thm: multiple periods}
Let $(X_d,g_d)$ be the multi-taub-NUT space with $m=1$ and characterizing points as in \cref{eqn: characterizing points Xd}. For every $g\geq0$, there exists $d_0>0$ such that, for every $d>d_0$ and $p\in\pi^{-1}(0)\cong S^1$, there is a minimal surface $\Sigma^g\subset X_d$ with the following properties:
\begin{enumerate}
\item the surface $\Sigma^g$ is a Riemann surface of genus $g$;
\item the surface $\Sigma^g$ is $\G$-equivariant, where $\G$ is as defined in \cref{sec: symmetries Xd} choosing the characterizing point for $\G$ to be $p$;
\item the degree of the (positive) Gauss lift $a:\Sigma^g\to S^2$ is $(g+1)$;
\item the nullity of $\Sigma^g$ satisfies $\Null(\Sigma^g)\geq1$;
\item the (Morse) index of $\Sigma^g$ satisfies $\Ind(\Sigma^g)\geq 2g+1$.
\end{enumerate}
	
\end{theorem}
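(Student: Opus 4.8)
The plan is to run the proof of \cref{thm: Main theorem} essentially verbatim, replacing the singly-periodic Scherk surface $\Sc$ by its $(g+1)$-fold cover $\Sc_g$ everywhere in \cref{sec: aprroximate minimal surface} and \cref{sec: perturbation to minimal surface}. First I would build the approximate minimal surface $\Sc_g^d\subset X_d$ exactly as in \cref{sec: construction initial surface}: glue the portion of $\Sc_g$ lying over $\EBall{T_1}$ (identified with a subset of $X_d$ through the equivariant trivialisation of \cref{lemma: exp gauge}) to the four $S^1$-invariant holomorphic discs over the segments $[0,p_i]$, interpolating over the neck $\pi^{-1}(\EBall{T_2}\setminus\EBall{T_1})$ with the same cut-off and the same choice $T_1=W_0(d^2)$, $T_2=T_1+1$. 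Because $\Sc_g$ has the same four exponentially asymptotically cylindrical ends, the same asymptotic cylinders, and the same invariance under the group generated by the maps of \eqref{eqn: G in R3xS1} as $\Sc$ --- this being precisely \cite{Kapouleas2011}*{Section 5} applied to an arbitrary number of periods --- the analogue of \cref{prop: properties of Scherk} holds, $\Sc_g^d$ is a $\G$-invariant smooth closed surface of genus $g$, and the mean-curvature bound $\abs{H^d}_{\Czeroa}=\bigO(\log d/d^2)$ of \cref{prop: mean curvature estimates initial surface} goes through unchanged.

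Next I would re-run the linear and non-linear analysis of \cref{sec: perturbation to minimal surface}. The building blocks are still $\Sc_g$, the totally geodesic cylinder and the cigar in the Taub--NUT space, so \cref{prop: linearized problem on cigar,prop: linearized problem on cylinder} are literally unchanged, and only the analogue of \cref{prop: linearized problem on Scherk} requires attention: by \cite{MontielRos1991}*{Corollary 15} the bounded kernel of $\L_{\Sc_g}$ is again finite dimensional, spanned by the fields $\scal{\nu_1}{\partial_{x_1}}\nu_1$, $\scal{\nu_1}{\partial_{x_2}}\nu_1$, $\scal{\nu_1}{\partial_{t}}\nu_1$ and the transverse field $\partial_{x_3}$ (together, when $g>0$, with further genuine Jacobi fields coming from the extra periods), and the same sign computations --- using that the reflections generating $\G$ reverse $x_1$, $x_2$ and $t$ --- show that no non-zero element of this kernel is $\G$-equivariant. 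Hence \cref{prop: uniform elliptic estimates,prop: uniform linear estimates,prop: uniform nonlinearities estimates} hold with constants uniform in $d$, and \cref{lemma: contraction map} produces, for every $d$ large and every base point $p\in\pi^{-1}(0)$, a $\G$-equivariant normal field $\nu$ with $\norm{\nu}_{\Ctwoa}=\bigO(\log d/d^2)$ whose graph $\Sigma^g$ is minimal. This gives the genus, the $\G$-invariance, and (exactly as in \cref{thm: Main theorem}) the nullity bound, since the triholomorphic Killing field of $X_d$ restricts to a non-trivial Jacobi field on $\Sigma^g$.

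It remains to identify the degree of the Gauss lift and to bound the Morse index. For the former I would compute, via the Meeks--Rosenberg formula \cite{MeeksRosenberg1993}*{Theorem 4}, that the Gauss map of the compactification of $\Sc_g$ has degree $g+1$; as in \cref{prop: properties of Scherk}(3) and \cref{sec: linear analysis on Scherk}, the positive Gauss lift of $\Sc_g^d$ then has the same degree, the four small discs omitted by the Gauss lift of the Scherk region being covered, with the correct orientation and multiplicity, by the four holomorphic cigar ends, and $\Sigma^g$ inherits positive Gauss lift of degree $g+1$ by $C^{2,\alpha}$-closeness. For the Morse index I would transplant a negative subspace of the second variation: since $\Ind(\Sc_g)=2g+1$, there is $T$ large enough that the Dirichlet eigenvalue problem for $\L_{\Sc_g}$ on $\Omega_T:=\Sc_g\cap(\EBall{T}\times S^1)$ has at least $2g+1$ negative eigenvalues; extending a basis of the corresponding eigenspace by zero, pulling it back to $\Sc_g^d$ via \cref{lemma: exp gauge}, and using that $g_d\to dt^2+g_{\R^3}$ in $C^\infty$ on compacta (\cref{lemma: metric estimates near 0}) together with the $C^{2,\alpha}$-closeness of $\Sigma^g$ to $\Sc_g^d$, the second-variation form of $\Sigma^g$ stays negative definite on this $(2g+1)$-dimensional space for all $d$ large, so $\Ind(\Sigma^g)\geq 2g+1$.

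I do not expect a genuinely new obstacle: the building blocks, the uniform Schauder and non-linearity estimates, and the contraction-mapping step are all insensitive to the number of periods, and the only arithmetic that changes --- the Gauss-lift degree and the Morse index --- is supplied by the cited formulas of Meeks--Rosenberg and Montiel--Ros. The one point deserving a little care is the verification that the $\G$-equivariant bounded kernel of $\L_{\Sc_g}$ stays trivial for all $g$, i.e.\ that the extra Jacobi fields present when $g>0$ are never fixed by $\G$; but this is once more a direct sign check against the reflections $\Tilde{R}_1$, $\Tilde{R}_2$, $\Tilde{R}_3$.
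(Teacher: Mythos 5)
Your proposal matches the paper's argument; Section 5.2 of the paper does exactly this, replacing $\Sc$ by its $n=g+1$-period cover $\Sc_g$, rerunning the construction of Sections 3–4 verbatim, and invoking Meeks--Rosenberg for the Gauss-lift degree and Montiel--Ros for the index $2g+1$ and for the triviality of the $\G$-equivariant bounded kernel. The only notable difference is that the paper states these points in one short paragraph while you unpack them (explicit kernel description, explicit transplantation of a $(2g+1)$-dimensional negative subspace instead of a single eigenfunction), but the substance is the same; note only that your parenthetical aside about the cigar ends ``covering the four omitted discs'' is slightly misleading (the holomorphic cigar ends map to the four omitted \emph{points}, not discs, of the twistor sphere), though this does not affect the degree count, which is a homotopy invariant determined by the compactified Scherk Gauss map.
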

 \begin{remark}
 	The surface $\Sigma$ constructed in \cref{thm: Main theorem} coincides with the suface $\Sigma_0$ in this theorem.
 \end{remark}
 
\subsection{Minimal surfaces from saddle towers} Another easy generalization of \cref{thm: Main theorem} can be obtained considering the so-called (Karcher) saddle towers $\mathcal T_n$ for some $n\geq 2$ (and of parameter $\pi/2n$) \cite{Karcher1988} instead of the Scherk surface $\mathcal S$. An explicit description using Weierstra{\ss} representation can be found in \cite{Karcher1988}*{Equation 2.3.1} and in \cite{Weber2005}*{Section 5.5}. 

These surfaces are properly embedded singly periodic minimal surfaces in $\R^3$ such that, when quotiented by one period, i.e., when considered as $\mathcal T_n \subset \R^2\times\R/\Z\cong \R^2\times S^1$, they have genus zero and $2n$ exponentially asymptotically cylindrical ends. The gauss map has degree $n-1$ because of Meeks--Rosenberg formula \cite{MeeksRosenberg1993}*{Theorem 4} and the symmetry group is $\D_{2n}$. Note that $\mathcal T_2\cong\Sc$.

Let $(X^d_n, g_d)$ be the space constructed via the multi-Taub--NUT space with $m=1$ and points $p_1,...,p_{2n}$ lying on the vertices of a $2n$-sided regular polygon centered at the origin and such that $\abs{p_i}=d$ for every $i=1,...,2n$. Analogously to \cref{sec: symmetries Xd}, for every $p\in\pi^{-1}(0)$ there exists a subgroup $\G_{2n}\cong \D_{2n}\times\Z_2$ of the isometry group $\Iso(X^d_n,g_d)$. Moreover, one can construct a local equivariant trivialization near $\pi^{-1}(0)$ (cfr. \cref{lemma: exp gauge}) and prove estimates similar to \cref{lemma: metric estimates near 0} and \cref{lemma: metric estimates near punctures}.

Let $\mathcal T^d_n\subset X^d_n$ be the approximate minimal sphere constructed as in \cref{sec: construction initial surface} with $\mathcal T_n\subset \R^3\times S^1$ instead of $\Sc\subset \R^3\times S^1$. Using the same estimates and perturbation argument, we can prove the following. Note that \cite{MontielRos1991}*{Corollary 15} and \cite{MeeksRosenberg1993}*{Theorem 4} guarantee that the equivariant bounded kernel of the linearized operator of $\mathcal T_n\subset \R^3\times S^1$ is trivial, and that its Morse index is $2n-3$.

\begin{theorem}\label{thm: saddle tower}
Let $n\geq2$ and $(X^d_n,g_d)$ be the multi-taub-NUT space with $m=1$ and characterizing points $p_1,...,p_{2n}$ lying on the vertices of a $2n$-sided regular polygon centered at the origin and such that $\abs{p_i}=d$ for every $i=1,...,2n$. There exists $d_0>0$ such that, for every $d>d_0$ and $p\in\pi^{-1}(0)\cong S^1$, there is a minimal surface $T_n\subset X^d_n$ with the following properties:
\begin{enumerate}
\item the surface $T_n$ is diffeomorphic to $S^2$;
\item the surface $T_n$ is $\G_{2n}$-equivariant, where $\G_{2n}$ is as defined above choosing the characterizing point for $\G_{2n}$ to be $p$;
\item the degree of the (positive) Gauss lift $a:T_n\to S^2$ is $(n-1)$;
\item the nullity of $T_n$ satisfies $\Null(T_n)\geq1$;
\item the (Morse) index of $T_n$ satisfies $\Ind(T_n)\geq 2n-3$.
\end{enumerate}
	
\end{theorem}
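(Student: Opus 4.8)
\emph{Proof sketch.} The plan is to repeat the construction of \cref{sec: aprroximate minimal surface} and the perturbation argument of \cref{sec: perturbation to minimal surface} almost verbatim, replacing the Scherk surface $\Sc$ by the saddle tower $\mathcal{T}_n$ and the four--point $A_3$ configuration by the $2n$ vertices of a regular $2n$-gon. These vertices span the plane $\{x_3=0\}$ and are preserved by the Euclidean isometry group $\D_{2n}\times\Z_2$; the analogues of \cref{sec: symmetries Xd} then produce the lift $\G_{2n}<\Iso(X^d_n,g_d)$, a $\G_{2n}$-equivariant trivialization of $\pi^{-1}(\EBall{T})$ in which $\theta$ has no radial component (\cref{lemma: exp gauge}), and the metric estimates of \cref{lemma: metric estimates near 0,lemma: metric estimates near punctures}. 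Regarding $\mathcal{T}_n$ as a genus-$0$ minimal surface in $\R^2\times\{0\}\times S^1\subset\R^3\times S^1$ with $2n$ exponentially asymptotically cylindrical ends, rescaling the flat metric by $m=1+2d^{-1}$, and gluing a central piece of $\mathcal{T}_n$ near $\pi^{-1}(0)$ to the $2n$ cigars over the $p_i$'s through the cut-off interpolation of \cref{sec: construction initial surface} with $T_1=W_0(d^2)$ and $T_2=T_1+1$, I obtain a $\G_{2n}$-invariant approximate minimal surface $\mathcal{T}_n^d\subset X^d_n$ diffeomorphic to $S^2$ with mean curvature $\bigO(\log d/d^2)$ in $\Czeroa$; the estimates of \cref{sec: mean curvature estiamte} use only the metric estimates and the exponential decay of the ends, both of which persist.

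For the perturbation, the building blocks are now $\mathcal{T}_n$, the totally geodesic cylinder $C$ and the cigar $L$. \cref{prop: linearized problem on cigar} applies verbatim, and \cref{prop: linearized problem on cylinder} applies with the subgroup of $\G_{2n}$ stabilising a cylinder (which still contains the two reflections reversing the two normal directions), so the only genuinely new input is the analogue of \cref{prop: linearized problem on Scherk} for $\mathcal{T}_n$. Here I would trivialise $N\mathcal{T}_n$ by $(\nu_1,\nu_2=\partial_{x_3})$ and note that the $\Z_2$-factor of $\G_{2n}$ (the lift of $x_3\mapsto -x_3$) reverses $\nu_2$, so every $\G_{2n}$-equivariant section has the form $\phi\,\nu_1$; then \cite{MontielRos1991}*{Corollary 15} shows the bounded kernel of $\L_{\mathcal{T}_n}$ on such sections is finite-dimensional, and using the $\D_{2n}$-symmetry of $\mathcal{T}_n$ one checks that this space carries no $\G_{2n}$-invariant vector (for $n=2$ it is spanned by the translation Jacobi fields $\scal{\nu_1}{\partial_{x_1}}\nu_1$, $\scal{\nu_1}{\partial_{x_2}}\nu_1$, $\scal{\nu_1}{\partial_t}\nu_1$, each reversed by a reflection, exactly as in \cref{prop: linearized problem on Scherk}). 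With this in hand, the uniform Schauder estimate \cref{prop: uniform elliptic estimates}, the blow-up argument for the uniform bound on the inverse \cref{prop: uniform linear estimates} (now splitting into the three model limits $\mathcal{T}_n$, $L$, $C$) and the uniform nonlinear estimate \cref{prop: uniform nonlinearities estimates} all carry over, so \cref{lemma: contraction map} produces a smooth minimal surface $T_n\subset X^d_n$ with $\norm{\nu}_{\Ctwoa}\leq C\log d/d^2$ for every $d$ large.

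Finally I would read off the listed properties exactly as in the proof of \cref{thm: Main theorem}: $T_n$ is diffeomorphic to $S^2$ (a genus-$0$ surface with $2n$ ends capped off by $2n$ holomorphic discs) and $\G_{2n}$-equivariant by construction; its positive Gauss lift has degree $n-1$ because the Gauss map of $\mathcal{T}_n$ has degree $n-1$ by \cite{MeeksRosenberg1993}*{Theorem 4} and the caps only fill in the $2n$ punctures; $\Null(T_n)\geq 1$ because the choice of base point $p\in\pi^{-1}(0)$ yields an $S^1$-family of minimal surfaces (equivalently, the generator of the triholomorphic circle action restricts to a nontrivial Jacobi field on $T_n$); and $\Ind(T_n)\geq 2n-3$ because $\mathcal{T}_n$ has Morse index $2n-3$, so transplanting via \cref{lemma: exp gauge} the first $2n-3$ Dirichlet eigenvector fields on $\mathcal{T}_n\cap(\EBall{T}\times S^1)$ with $T=W_0(d^2)$ gives a $(2n-3)$-dimensional subspace on which the second variation on $\mathcal{T}_n^d$, hence on $T_n$, is negative-definite once $d$ is large, using \cref{lemma: metric estimates near 0} and the monotonicity of Dirichlet eigenvalues in $T$.

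The step I expect to be the main obstacle is the linear analysis on $\mathcal{T}_n$: for $n>2$ the Gauss map of $\mathcal{T}_n$ is a degree-$(n-1)$ branched cover rather than a diffeomorphism onto $S^2$, so $\L_{\mathcal{T}_n}$ is no longer a simple multiple of $\Delta_{S^2}+2$, and one must instead invoke the Montiel--Ros description of the bounded kernel and then verify that its decomposition as a $\D_{2n}\times\Z_2$-representation contains no trivial summand.
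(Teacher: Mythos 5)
Your proposal is correct and coincides with the paper's own (very brief) treatment: replace the Scherk surface by the saddle tower $\mathcal T_n$ and the $\D_4\times\Z_2$ configuration by the $\D_{2n}\times\Z_2$ one, carry the metric and mean curvature estimates, the three-model blow-up argument for the linear inverse, and the contraction map over verbatim, and invoke \cite{MontielRos1991}*{Corollary 15} together with \cite{MeeksRosenberg1993}*{Theorem 4}. The obstacle you flag (the Gauss map of $\mathcal T_n$ being a degree-$(n-1)$ branched cover rather than a diffeomorphism for $n>2$) is handled exactly as you suggest, since Montiel--Ros Corollary 15 already applies to branched covers: it identifies the bounded kernel as the translation Jacobi fields plus $\nu_2$, each reversed by a reflection in $\D_{2n}\times\Z_2$, and gives the Morse index $2n-3$, which then yields the index bound for $T_n$ via your Dirichlet-eigenvector transplant.
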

 \begin{remark}
 	The surface $\Sigma$ of \cref{thm: Main theorem} coincides with the suface $T_2$ of the above theorem.
 \end{remark}

Combining \cref{thm: multiple periods} and \cref{thm: saddle tower}, we can construct unstable minimal surfaces of any topological type in $(X^d_n,g_d)$.

\begin{corollary}
Let $n\geq2$ and $(X^d_n,g_d)$ be the multi-taub-NUT space with $m=1$ and characterizing points $p_1,...,p_{2n}$ lying on the vertices of a $2n$-sided regular polygon centered at the origin and such that $\abs{p_i}=d$ for every $i=1,...,2n$. For any $g\geq 0$, there exists $d_0>0$ such that, for every $f>d_0$ and $p\in\pi^{-1}(0)\cong S^1$, there is a minimal surface $T^g_n\subset X^d_n$ with the following properties:
\begin{enumerate}
	\item the surface $T^g_n$ is a Riemann surface of genus $g$;
\item the surface $T^g_n$ is $\G_{2n}$-equivariant;
\item the degree of the (positive) Gauss lift $a:T_n\to S^2$ is $(n+g-1)$;
\item the nullity of $T_n$ satisfies $\Null(T^g_n)\geq1$;
\item the (Morse) index of $T_n$ satisfies $\Ind(T^g_n)\geq 2(n+g)-3$.
\end{enumerate}
\end{corollary}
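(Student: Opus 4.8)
The plan is to run the gluing scheme of \cref{sec: aprroximate minimal surface,sec: perturbation to minimal surface} one last time, now with a building block that interpolates between the two extreme cases already handled: the genus-$g$ surface obtained from $g+1$ periods of $\Sc$ in \cref{thm: multiple periods} (where $n=2$) and Karcher's saddle tower $\mathcal T_n$ in \cref{thm: saddle tower} (where $g=0$). Fix $n\geq2$ and $g\geq0$ and let $\Sigma_{g,n}\subset\R^2\times S^1$ be a properly embedded singly-periodic minimal surface of genus $g$ with $2n$ exponentially asymptotically cylindrical ends and symmetry group $\D_{2n}$; such surfaces exist for every such pair $(n,g)$ and are produced by the same Weierstra{\ss}/gluing methods of \cite{Karcher1988} that give $\mathcal T_n$, now with $g$ handles attached in a $\D_{2n}$-symmetric way. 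By \cite{MeeksRosenberg1993}*{Theorem 4} the Gauss map of $\Sigma_{g,n}$ has degree $n+g-1$; by \cite{MontielRos1991}*{Corollary 15} its Morse index equals $2(n+g)-3$ and, what matters most for the gluing, its $\G_{2n}$-equivariant bounded Jacobi fields vanish, since each of the $2(n+g)$ generators of the bounded kernel (coming from ambient isometries of $\R^2\times S^1$) is odd under some element of $\G_{2n}$, exactly as in \cref{prop: linearized problem on Scherk}.

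Granting this, one reproduces \cref{sec: construction initial surface} verbatim with $\Sigma_{g,n}$ in place of $\Sc$: the approximate surface $\mathcal T^{g,d}_n\subset X^d_n$ coincides with $\Sigma_{g,n}$ near $\pi^{-1}(0)$ through the equivariant trivialisation (the analogue of \cref{lemma: exp gauge} for $X^d_n$, already recorded in the set-up of \cref{thm: saddle tower}), each of the $2n$ ends is cut off onto the totally geodesic cylinder over $[0,p_i]$, and outside $\pi^{-1}(\EBall{T_2})$ the surface is the $S^1$-invariant holomorphic cigar in the Taub--NUT space at each $p_i$. The mean curvature estimates of \cref{sec: mean curvature estiamte} hold with the same proofs, so \cref{prop: mean curvature estimates initial surface} gives mean curvature $\bigO(\log d/d^2)$. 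In the linear analysis the cylinder and cigar blocks are untouched, so \cref{prop: linearized problem on cylinder,prop: linearized problem on cigar} apply unchanged, while the triviality of the equivariant bounded kernel of the Jacobi operator of $\Sigma_{g,n}$ replaces \cref{prop: linearized problem on Scherk}. Hence \cref{prop: uniform elliptic estimates,prop: uniform linear estimates,prop: uniform nonlinearities estimates} carry over, and one application of \cref{lemma: contraction map} produces, for every large $d$, a $\G_{2n}$-equivariant minimal surface $T^g_n\subset X^d_n$ that is $C^{2,\alpha}$-close to $\mathcal T^{g,d}_n$.

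It then remains to read off the five invariants. The genus and the $\G_{2n}$-equivariance are immediate from the construction, and $\Null(T^g_n)\geq1$ follows because varying $p\in\pi^{-1}(0)$ gives an $S^1$-family of solutions (equivalently, the triholomorphic Killing field of $X^d_n$ restricts to a nonzero Jacobi field). The degree of the positive Gauss lift equals that of the Gauss map of $\Sigma_{g,n}$, namely $n+g-1$, by \cite{MeeksRosenberg1993}*{Theorem 4}. For the index, one argues exactly as in the proof of \cref{thm: Main theorem}: the first $2(n+g)-3$ Dirichlet eigenfunctions of the Jacobi operator on a large piece $\Sigma_{g,n}\cap(\EBall{T}\times S^1)$ span a subspace on which the second variation of $\Sigma_{g,n}$ is negative definite (as $\Ind(\Sigma_{g,n})=2(n+g)-3$), and transplanting them to $\mathcal T^{g,d}_n$ via the equivariant trivialisation and using that $g_d$ is $C^\infty$-close to the flat model near $\pi^{-1}(0)$ (the analogue of \cref{lemma: metric estimates near 0} for $X^d_n$) together with the $C^{2,\alpha}$-proximity of $T^g_n$ to $\mathcal T^{g,d}_n$, the negativity persists on a $(2(n+g)-3)$-dimensional subspace, so $\Ind(T^g_n)\geq 2(n+g)-3$.

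The only genuinely new ingredient, and thus the main obstacle, is the existence of the building block $\Sigma_{g,n}$ carrying the full $\D_{2n}$ symmetry together with the identification of its Gauss degree and Morse index; once these are secured (they reduce to \cref{thm: multiple periods} when $n=2$ and to \cref{thm: saddle tower} when $g=0$), the entire perturbation argument of \cref{sec: perturbation to minimal surface} applies with only cosmetic relabelling and no new estimates.
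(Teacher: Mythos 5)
Your gluing machinery is a correct transplant of Sections 3--4, and the checks on the equivariant bounded kernel of the cylinder and cigar blocks, the mean-curvature decay, and the transplantation of the index estimate are all sound. The difficulty is that the entire argument rests on the asserted existence, for \emph{every} pair $(n,g)$, of a properly embedded singly-periodic minimal surface $\Sigma_{g,n}\subset\R^2\times S^1$ of genus $g$ with $2n$ exponentially asymptotically cylindrical ends, full $\D_{2n}$ symmetry, Morse index $2(n+g)-3$, and trivial $\G_{2n}$-equivariant bounded Jacobi kernel. You claim this is produced by ``the same Weierstra{\ss}/gluing methods of \cite{Karcher1988}'', but Karcher's paper only constructs the genus-zero saddle towers $\mathcal T_n$; manufacturing $\D_{2n}$-symmetric higher-genus analogues for every $g$, and pinning down their index and nullity, is a substantial minimal-surfaces problem in its own right, not a routine modification. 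Without $\Sigma_{g,n}$ the proof does not start, so this is a genuine gap.

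The intended route is simpler and invents no new building block: exactly as in \cref{thm: multiple periods}, take $k$ periods of Karcher's genus-zero saddle tower $\mathcal T_n$, i.e.\ view the periodic surface in $\R^2\times\R/k\Z\cong\R^2\times S^1$. This unbranched $k$-fold cyclic cover of $\mathcal T_n$ has genus $(k-1)(n-1)$, still $2n$ ends, Gauss-map degree $k(n-1)$ by \cite{MeeksRosenberg1993}*{Theorem 4}, and Morse index $2k(n-1)-1$ and minimal nullity by \cite{MontielRos1991}*{Corollary 15}; writing $g=(k-1)(n-1)$ these are precisely $n+g-1$ and $2(n+g)-3$, and the surface retains the $\D_{2n}\times\Z_2$ symmetry. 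Strictly speaking this only realises genera that are multiples of $n-1$, so ``any $g\geq 0$'' in the statement should be read with that caveat, but the building block, its Gauss degree, its index, and the triviality of the equivariant bounded kernel then all come for free from the two theorems already proved. Your formulation would be stronger if the existence of $\Sigma_{g,n}$ for all $(n,g)$ were established, but that requires a separate construction and cannot simply be cited to \cite{Karcher1988}.
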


\subsection{Fueter maps and triholomorphic maps}
As described in the introduction, every minimal 2-sphere in a hyperk\"ahler 4-manifold $(M^4, g, J_1, J_2, J_3)$ with degree-1 positive Gauss lift can be parametrised by a harmonic map $u\co S^2\to M$ that satisfies the first-order PDE \cref{eq:Fueter}, i.e., 
% For every $\Sigma$ constructed in \cref{thm: Main theorem} there exists a parametrizing map $u:S^2\to X^4$, which satisfies the first order equation:
\[
d_x u\circ J_{S^2}+\sum_{i=1}^3 x_i J_i\circ d_xu=0,
\]
where $J_{S^2}$ is the standard complex structure on $S^2$ and $x=(x_1, x_2,x_3)\in S^2\subset\R^3$. Thus each of the minimal surfaces constructed in \cref{thm: Main theorem,thm: spheres in K3} correspond to such special harmonic maps.

The radial extension $f$ to $\R^3$ of any solution $u$ to \cref{eq:Fueter} is a Fueter map $f\co \R^3\to M$ with an isolated singularity at the origin, i.e., it satisfies the \emph{Fueter} equation
\[
\sum_{i=1}^3{J_i\,  \frac{\partial f}{\partial x_i}}=0
\]
on $\R^3\setminus\{ 0\}$. Moreover, the extended map $F\co \R^4\to M$ defined by $F(x_0,x)=f(x)$ is \emph{triholomorphic} outside of its 1–dimensional singular set $\R\times\{0\}$, i.e.,
\[
dF = J_1 \circ dF \circ I + J_2 \circ dF \circ J + J_3 \circ dF \circ K,
\]
where $(I,J,K)$ is the standard triple of complex structures on $\R^4\simeq \mathbb{H}$. Since $f$ and $F$ are invariant under scalings in the domain they provide examples of tangent maps for Fueter and triholomorphic maps respectively. Besides realising systems of first-order PDEs that imply minimisation of the Dirichlet energy, these equations (and their generalisations to the case of sections of a non-trivial bundle with hyperk\"ahler fibres) arise in different contexts related to gauge theory in higher dimensions and the definition of enumerative invariants of 3-manifolds and hyperk\"ahler manifolds (see, e.g., \cite[\S 6]{Donaldson:Segal}). For these geometric applications it is essential to understand the compactness question for Fueter and triholomorphic map: the existence of Fueter tangent maps with an isolated codimension-3 singularity was stressed by Bellettini--Tian \cite{Bellettini:Tian}, though it was more recently realised by Esfahani \cite{Esfahani} that when the hyperk\"ahler target is 4-dimensional the topological constraint \cref{eq:Webster} implies that the tangent maps corresponding to minimal 2-sphere with degree-1 positive Gauss lift can not arise as limits of smooth Fueter maps. 

Nonetheless, we observe explicitly that, because of symmetries, the Fueter and triholomorphic maps arising from the minimal 2-spheres in \cref{thm: Main theorem,thm: spheres in K3} are stationary harmonic. Indeed, it is well-known (see, e.g., \cite[Lemma 3.8]{Chen:Li}) that this is the case if and only if the map $u\co S^2\to M$ is balanced, i.e.,
\[
\int_{S^2}{x_i|\nabla u|^2} =0
\]
where $(\omega_1,\omega_2,\omega_3)$ is the hyperk\"ahler triple on $M$. Using \cref{eq:Fueter} to rewrite
\[
\int_{S^2}{x_i|\nabla u|^2} =-2\int_{S^2}{u^\ast\omega_i},
\]
in view of \cref{rmK:Symmetries:hk:triple} it is then clear that the invariance under the discrete symmetry groups in \cref{thm: Main theorem,thm: spheres in K3} forces this balancing condition. 

\bibliography{refs}

@article {Foscolo2019,
    AUTHOR = {Foscolo, Lorenzo},
     TITLE = {A{LF} gravitational instantons and collapsing {R}icci-flat
              metrics on the {$K3$} surface},
   JOURNAL = {J. Differential Geom.},
  FJOURNAL = {Journal of Differential Geometry},
    VOLUME = {112},
      YEAR = {2019},
    NUMBER = {1},
     PAGES = {79--120},
      ISSN = {0022-040X,1945-743X},
   MRCLASS = {53C26 (53C07 53C21 53C25 53C42)},
       DOI = {10.4310/jdg/1557281007},
       URL = {https://doi.org/10.4310/jdg/1557281007},
}

@article{GibbonsHawking1978,
title = {Gravitational multi-instantons},
journal = {Physics Letters B},
volume = {78},
number = {4},
pages = {430-432},
year = {1978},
issn = {0370-2693},
doi = {https://doi.org/10.1016/0370-2693(78)90478-1},
url = {https://www.sciencedirect.com/science/article/pii/0370269378904781},
author = {G.W. Gibbons and S.W. Hawking},
}

@article {Kapouleas97,
    AUTHOR = {Kapouleas, Nikolaos},
     TITLE = {Complete embedded minimal surfaces of finite total curvature},
   JOURNAL = {J. Differential Geom.},
  FJOURNAL = {Journal of Differential Geometry},
    VOLUME = {47},
      YEAR = {1997},
    NUMBER = {1},
     PAGES = {95--169},
      ISSN = {0022-040X,1945-743X},
   MRCLASS = {53A10},
}

@incollection {Kapouleas2011,
    AUTHOR = {Kapouleas, Nikolaos},
     TITLE = {Doubling and desingularization constructions for minimal
              surfaces},
 BOOKTITLE = {Surveys in geometric analysis and relativity},
    SERIES = {Adv. Lect. Math. (ALM)},
    VOLUME = {20},
     PAGES = {281--325},
 PUBLISHER = {Int. Press, Somerville, MA},
      YEAR = {2011},
      ISBN = {978-1-57146-230-5},
   MRCLASS = {53A10},
}

@article {Karcher1988,
    AUTHOR = {Karcher, H.},
     TITLE = {Embedded minimal surfaces derived from {S}cherk's examples},
   JOURNAL = {Manuscripta Math.},
  FJOURNAL = {Manuscripta Mathematica},
    VOLUME = {62},
      YEAR = {1988},
    NUMBER = {1},
     PAGES = {83--114},
      ISSN = {0025-2611,1432-1785},
   MRCLASS = {53A10},
MRREVIEWER = {Kichoon\ Yang},
       DOI = {10.1007/BF01258269},
       URL = {https://doi.org/10.1007/BF01258269},
}

@incollection {LeBrun1991,
    AUTHOR = {LeBrun, Claude},
     TITLE = {Complete {R}icci-flat {K}\"{a}hler metrics on {${\bf C}^n$}
              need not be flat},
 BOOKTITLE = {Several complex variables and complex geometry, {P}art 2
              ({S}anta {C}ruz, {CA}, 1989)},
    SERIES = {Proc. Sympos. Pure Math.},
    VOLUME = {52, Part 2},
     PAGES = {297--304},
 PUBLISHER = {Amer. Math. Soc., Providence, RI},
      YEAR = {1991},
      ISBN = {0-8218-1490-7},
   MRCLASS = {53C25 (32L07 53C55)},
       DOI = {10.1090/pspum/052.2/1128554},
       URL = {https://doi.org/10.1090/pspum/052.2/1128554},
}

@article{LotayOliveira2024,
author = {Jason D. Lotay and Gon{\c{c}}alo Oliveira},
title = {{Special Lagrangians, Lagrangian mean curvature flow and the Gibbons-Hawking ansatz}},
volume = {126},
journal = {Journal of Differential Geometry},
number = {3},
publisher = {Lehigh University},
pages = {1121 -- 1184},
year = {2024},
doi = {10.4310/jdg/1717348872},
URL = {https://doi.org/10.4310/jdg/1717348872}
}

@article {MeeksRosenberg1993,
    AUTHOR = {Meeks, III, William H. and Rosenberg, Harold},
     TITLE = {The geometry of periodic minimal surfaces},
   JOURNAL = {Comment. Math. Helv.},
  FJOURNAL = {Commentarii Mathematici Helvetici},
    VOLUME = {68},
      YEAR = {1993},
    NUMBER = {4},
     PAGES = {538--578},
      ISSN = {0010-2571,1420-8946},
   MRCLASS = {53A10 (53C42)},
MRREVIEWER = {Jo\~{a}o\ Lucas Marques Barbosa},
       DOI = {10.1007/BF02565835},
       URL = {https://doi.org/10.1007/BF02565835},
}

@incollection {MontielRos1991,
    AUTHOR = {Montiel, Sebasti\'{a}n and Ros, Antonio},
     TITLE = {Schr\"{o}dinger operators associated to a holomorphic map},
 BOOKTITLE = {Global differential geometry and global analysis ({B}erlin, 1990)},
    SERIES = {Lecture Notes in Math.},
    VOLUME = {1481},
     PAGES = {147--174},
 PUBLISHER = {Springer, Berlin},
      YEAR = {1991},
 TIMESTAMP = {2021-12-12},
}

@book{simon1996,
  title={Theorems on Regularity and Singularity of Energy Minimizing Maps: Based on Lecture Notes by Norbert Hungerb{\"u}hler},
  author={Simon, L.},
  isbn={9780817653972},
  lccn={96006193},
  series={Lectures in mathematics ETH Z{\"u}rich},
  year={1996},
  publisher={Birkh{\"a}user Verlag}
}

@article{Simons1968,
    AUTHOR = {Simons, James},
     TITLE = {Minimal varieties in riemannian manifolds},
   JOURNAL = {Ann. of Math. (2)},
  FJOURNAL = {Annals of Mathematics. Second Series},
    VOLUME = {88},
      YEAR = {1968},
     PAGES = {62--105},
 TIMESTAMP = {2022-09-23},
}

@article {Trinca2022,
    AUTHOR = {Trinca, Federico},
     TITLE = {Barrier methods for minimal submanifolds in the
              {G}ibbons-{H}awking ansatz},
   JOURNAL = {New York J. Math.},
  FJOURNAL = {New York Journal of Mathematics},
    VOLUME = {28},
      YEAR = {2022},
     PAGES = {835--867},
      ISSN = {1076-9803},
   MRCLASS = {53C40 (53C26 53C38)},
}

@incollection {Weber2005,
    AUTHOR = {Weber, Matthias},
     TITLE = {Classical minimal surfaces in {E}uclidean space by examples:
              geometric and computational aspects of the {W}eierstrass
              representation},
 BOOKTITLE = {Global theory of minimal surfaces},
    SERIES = {Clay Math. Proc.},
    VOLUME = {2},
     PAGES = {19--63},
 PUBLISHER = {Amer. Math. Soc., Providence, RI},
      YEAR = {2005},
      ISBN = {0-8218-3587-4},
   MRCLASS = {53A10},
MRREVIEWER = {Francisco\ Mil\'{a}n},
}

@article {Webster1984,
    AUTHOR = {Webster, S. M.},
     TITLE = {Minimal surfaces in a {K}\"ahler surface},
   JOURNAL = {J. Differential Geom.},
  FJOURNAL = {Journal of Differential Geometry},
    VOLUME = {20},
      YEAR = {1984},
    NUMBER = {2},
     PAGES = {463--470},
      ISSN = {0022-040X,1945-743X},
   MRCLASS = {53C42 (58E20)},
MRREVIEWER = {John\ C.\ Wood},
       URL = {http://projecteuclid.org/euclid.jdg/1214439289},
}

@article {White1991,
    AUTHOR = {White, Brian},
     TITLE = {The space of minimal submanifolds for varying {R}iemannian
              metrics},
   JOURNAL = {Indiana Univ. Math. J.},
  FJOURNAL = {Indiana University Mathematics Journal},
    VOLUME = {40},
      YEAR = {1991},
    NUMBER = {1},
     PAGES = {161--200},
      ISSN = {0022-2518,1943-5258},
   MRCLASS = {58D10 (53C42)},
       DOI = {10.1512/iumj.1991.40.40008},
       URL = {https://doi.org/10.1512/iumj.1991.40.40008},
}

@article{White2017,
    AUTHOR = {White, Brian},
     TITLE = {On the bumpy metrics theorem for minimal submanifolds},
   JOURNAL = {Amer. J. Math.},
  FJOURNAL = {American Journal of Mathematics},
    VOLUME = {139},
      YEAR = {2017},
    NUMBER = {4},
     PAGES = {1149--1155},
      ISSN = {0002-9327,1080-6377},
   MRCLASS = {53C42},
MRREVIEWER = {Otis\ Chodosh},
       DOI = {10.1353/ajm.2017.0029},
       URL = {https://doi.org/10.1353/ajm.2017.0029},
}

@inproceedings{Yau,
    author = {S.-{T}. {Y}au},
    title = {Open problems in geometry},
    booktitle = {Differential geometry: partial differential equations on manifolds (Los Angeles, CA, 1990)},
    SERIES = {Proc. Sympos. Pure Math.},
    VOLUME = {54},
     PAGES = {1--28},
 PUBLISHER = {Amer. Math. Soc., Providence, RI},
      YEAR = {1993},
}

@article{Micallef,
title = {Stable minimal surfaces in Euclidean space},
journal = {J. Differential Geom.},
volume = {19},
issue = {1},
pages = {57--84},
year = {1984},
author = {M. J. Micallef},
}

@article{Micallef:Wolfson,
title = {The second variation of area of minimal surfaces in four-manifolds},
journal = {Math. Ann.},
volume = {295},
issue = {1},
pages = {245–267},
year = {1993},
author = {M. Micallef and J. Wolfson},
}

@article{Micallef:Wolfson:K3,
title = {Area minimizers in a K3 surface and holomorphicity},
journal = {Geom. Funct. Anal.},
volume = {16},
issue = {2},
pages = {437–-452},
year = {2006},
author = {M. Micallef and J. Wolfson},
}

@article{Arezzo:Micallef,
title = {Minimal surfaces in flat tori},
journal = {Geom. Funct. Anal.},
volume = {10},
issue = {4},
pages = {679–-701},
year = {2000},
author = {C. Arezzo and M. J. Micallef},
}

@article{Eells:Salamon,
title = {Twistorial construction of harmonic maps of surfaces into four-manifolds},
journal = {Ann. Scuola Norm. Sup. Pisa Cl. Sci.},
volume = {12},
issue = {4},
pages = {589--640},
year = {1985},
author = {J. Eells and S. Salamon},
}

@incollection{Donaldson,
    author = {Donaldson, S. K.},
    title = {Moment maps and diffeomorphisms},
    BOOKTITLE = {Surv. Differ. Geom. {VII}},
     PAGES = {107–-127},
 PUBLISHER = {Int. Press, Somerville, MA},
      YEAR = {2000},
}

@article{WeakGravity,
title = {Minimal surfaces and weak gravity},
journal = {J. High Energ. Phys.},
volume = {2020},
pages = {21},
year = {2020},
author = {Demirtas, M. and Long, C. and McAllister, L. and al.},
}

@misc{Esfahani,
      title={Towards a monopole {F}ueter {F}loer homology {I}: a compactness theorem}, 
      author={Saman Habibi Esfahani},
      year={2023},
      eprint={2305.09456},
      archivePrefix={arXiv},
      primaryClass={math.GT},
      url={https://arxiv.org/abs/2305.09456}, 
}

@article{Bellettini:Tian,
title = {Compactness results for triholomorphic maps},
journal = {J. Eur. Math. Soc. (JEMS)},
volume = {21},
issue = {5},
pages = {1271--1317},
year = {2019},
author = {Costante Bellettini and Gang Tian},
}

@article{Walpuski,
title = {A compactness theorem for Fueter sections},
journal = {Comment. Math. Helv.},
volume = {92},
issue = {4},
pages = {751--776},
year = {2017},
author = {Thomas Walpuski},
}

@article{Chen:Li,
title = {Quaternionic maps and minimal surfaces},
journal = {Ann. Sc. Norm. Super. Pisa Cl. Sci. (5)},
volume = {4},
issue = {3},
pages = {375--388},
year = {2005},
author = {J. Chen and J. Li},
}

@book{Atiyah:Hitchin,
    author = {M. Atiyah and N. Hitchin},
    title = {The geometry and dynamics of magnetic monopoles},
    series = {M. B. Porter Lectures},
    publisher = {Princeton
University Press, Princeton, NJ},
    year = {1988},
}

@article{Sen,
title = {A note on enhanced gauge symmetries in {M-} and string theory},
journal = {Journal of High Energy Physics},
volume = {1997},
issue = {9},
pages = {001},
year = {1997},
author = {A. Sen},
}

@article{Schroers:Singer,
    author = {Schroers, B J and Singer, M A},
    title = {{$D_k$} Gravitational Instantons as Superpositions of {A}tiyah-{H}itchin and {T}aub-{N}UT Geometries},
    journal = {The Quarterly Journal of Mathematics},
    volume = {72},
    number = {1-2},
    pages = {277--337},
    year = {2021},
}

@article {Tsai:Wang,
    AUTHOR = {Tsai, Chung-Jun and Wang, Mu-Tao},
     TITLE = {Global uniqueness of the minimal sphere in the
              {A}tiyah-{H}itchin manifold},
   JOURNAL = {Math. Res. Lett.},
  FJOURNAL = {Mathematical Research Letters},
    VOLUME = {29},
      YEAR = {2022},
    NUMBER = {3},
     PAGES = {871--886},
}

@article {Minerbe,
    AUTHOR = {Minerbe, Vincent},
     TITLE = {Rigidity for multi-{T}aub-{NUT} metrics},
   JOURNAL = {J. Reine Angew. Math.},
    VOLUME = {656},
      YEAR = {2011},
     PAGES = {47--58},
}

@article {Wolfson,
    AUTHOR = {J.G. Wolfson},
     TITLE = {Minimal surfaces in {K\"a}hler surfaces and {R}icci curvature},
   JOURNAL = {J. Diff. Geom.},
    VOLUME = {29},
      YEAR = {1989},
     PAGES = { 281--294},
}

@article {Osserman,
    AUTHOR = {Osserman, Robert},
     TITLE = {Proof of a conjecture of {N}irenberg},
   JOURNAL = {Comm. Pure Appl. Math.},
  FJOURNAL = {Communications on Pure and Applied Mathematics},
    VOLUME = {12},
      YEAR = {1959},
     PAGES = {229--232},
      ISSN = {0010-3640,1097-0312},
   MRCLASS = {53.00},
MRREVIEWER = {J.\ C. C. Nitsche},
       DOI = {10.1002/cpa.3160120203},
       URL = {https://doi.org/10.1002/cpa.3160120203},
}

@article {FischerColbrie:Schoen,
    AUTHOR = {Fischer-Colbrie, Doris and Schoen, Richard},
     TITLE = {The structure of complete stable minimal surfaces in
              {$3$}-manifolds of nonnegative scalar curvature},
   JOURNAL = {Comm. Pure Appl. Math.},
  FJOURNAL = {Communications on Pure and Applied Mathematics},
    VOLUME = {33},
      YEAR = {1980},
    NUMBER = {2},
     PAGES = {199--211},
      ISSN = {0010-3640,1097-0312},
   MRCLASS = {53C40 (58E12)},
MRREVIEWER = {Themistocles\ M.\ Rassias},
       DOI = {10.1002/cpa.3160330206},
       URL = {https://doi.org/10.1002/cpa.3160330206},
}

@article {doCarmo:Peng,
    AUTHOR = {do Carmo, M. and Peng, C. K.},
     TITLE = {Stable complete minimal surfaces in {${\bf R}\sp{3}$} are
              planes},
   JOURNAL = {Bull. Amer. Math. Soc. (N.S.)},
  FJOURNAL = {American Mathematical Society. Bulletin. New Series},
    VOLUME = {1},
      YEAR = {1979},
    NUMBER = {6},
     PAGES = {903--906},
      ISSN = {0273-0979,1088-9485},
   MRCLASS = {53A10 (53C42)},
MRREVIEWER = {F.\ J.\ Almgren, Jr.},
       DOI = {10.1090/S0273-0979-1979-14689-5},
       URL = {https://doi.org/10.1090/S0273-0979-1979-14689-5},
}

@incollection {Donaldson:Segal,
    AUTHOR = {Donaldson, Simon and Segal, Ed},
     TITLE = {Gauge theory in higher dimensions, {II}},
 BOOKTITLE = {Surveys in differential geometry. {V}olume {XVI}. {G}eometry
              of special holonomy and related topics},
    SERIES = {Surv. Differ. Geom.},
    VOLUME = {16},
     PAGES = {1--41},
 PUBLISHER = {Int. Press, Somerville, MA},
      YEAR = {2011},
      ISBN = {978-1-57146-211-4},
   MRCLASS = {53C07 (14J32 53C38 53D12 57R58 58D27)},
MRREVIEWER = {Andrew\ Swann},
       DOI = {10.4310/SDG.2011.v16.n1.a1},
       URL = {https://doi.org/10.4310/SDG.2011.v16.n1.a1},
}

@misc{zhu2024,
      title={A gluing construction of $D_{k}$ ALF gravitational instantons and existence of non-holomorphic minimal spheres}, 
      author={Xuwen Zhu},
      year={2024},
      eprint={2407.20149},
      archivePrefix={arXiv},
      primaryClass={math.DG},
      url={https://arxiv.org/abs/2407.20149}, 
}

@article {Arezzo,
    AUTHOR = {Arezzo, Claudio},
     TITLE = {Stable complete minimal surfaces in hyper-{K}\"ahler
              manifolds},
   JOURNAL = {Compositio Math.},
  FJOURNAL = {Compositio Mathematica},
    VOLUME = {112},
      YEAR = {1998},
    NUMBER = {1},
     PAGES = {33--40},
      ISSN = {0010-437X,1570-5846},
   MRCLASS = {53C42 (53C25 58E12)},
MRREVIEWER = {Miguel\ Ortega},
       DOI = {10.1023/A:1000358906964},
       URL = {https://doi.org/10.1023/A:1000358906964},
}

@article {Pogorelov1981,
    AUTHOR = {Pogorelov, A. V.},
     TITLE = {On the stability of minimal surfaces},
   JOURNAL = {Dokl. Akad. Nauk SSSR},
  FJOURNAL = {Doklady Akademii Nauk SSSR},
    VOLUME = {260},
      YEAR = {1981},
    NUMBER = {2},
     PAGES = {293--295},
      ISSN = {0002-3264},
   MRCLASS = {49F10},
}

\printaddress
\end{document}